\numberwithin{equation}{section}
\newenvironment{proof}{\removelastskip\par\medskip   % inizio e fine dimostrazione
\noindent{\em Proof.}
\rm}{\penalty-20\null\hfill$\square$\par\medbreak}
\newtheorem{theorem}{Theorem}[section]
\newtheorem{lemma}[theorem]{Lemma}
\newtheorem{proposition}[theorem]{Proposition}
\newtheorem{definition}[theorem]{Definition}
\newtheorem{remark}[theorem]{Remark}
\newcommand{\ep}{\varepsilon}
\newcommand{\Ent}{\mathcal{E}}
\def\ind{\mathbbm{1}}
\newcommand{\pical}{\mathcal{P}}
\newcommand{\dd}{\mathrm{d}}
\newcommand{\lcal}{\mathcal L}
\newcommand{\R}{\mathbb R}
\DeclareMathOperator{\Lip}{Lip}
\title{JKO estimates in linear and non-linear Fokker-Planck equations, and Keller-Segel: $L^p$ and Sobolev bounds}
\begin{document}
\author{
 Simone Di Marino\thanks{Universit\`a di Genova, Dipartimento di Matematica (DIMA); Via Dodecaneso 35, 16146 Genova, ITALIA, email: \textsf{simone.dimarino@unige.it}}, Filippo Santambrogio\thanks{Institut Camille Jordan, Universit\'e Claude Bernard - Lyon 1; 43 boulevard du 11 novembre 1918,
69622 Villeurbanne cedex,
FRANCE, email: \textsf{santambrogio@math.univ-lyon1.fr} }
   }

\maketitle

% \tableofcontents
\abstract{We analyze some parabolic PDEs with different drift terms which are gradient flows in  the Wasserstein space and consider the corresponding discrete-in-time JKO scheme. We prove with optimal transport techniques how to control the $L^p$ and $L^\infty$ norms of the iterated solutions in terms of the previous norms, essentially recovering well-known results obtained on the continuous-in-time equations. Then we pass to higher order results, and in particulat to some specific BV and Sobolev estimates, where the JKO scheme together with the so-called ``five gradients inequality'' allows to recover some inequalities that can be deduced from the Bakry-Emery theory for diffusion operators, but also to obtain some novel ones, in particular for the Keller-Segel chemiotaxis model. }
% \newpage

\section{Short introduction}

The goal of this paper is to present some estimates on evolution PDEs in the space of probability densities which share two important features: they include a linear diffusion term, and they are gradient flows in the Wasserstein space $W_2$. These PDEs will be of the form
$$\partial_t\rho-\Delta\rho-\nabla\cdot (\rho\nabla u[\rho])=0,$$
complemented with no-flux boundary conditions and an intial condition on $\rho_0$.

We will in particular concentrate on the Fokker-Plack case, where $u[\rho]=V$ and $V$ is a fixed function (with possible regularity assumptions) independent of $\rho$, on the case where $u[\rho]=W*\rho$ is obtained by convolution and models interaction between particles, and on the parabolic-elliptic Keller-Segel case where $u[\rho]$ is related to $\rho$ via an elliptic equation. This last case models the evolution of a biological population $\rho$ subject to diffusion but attracted by the concentration of a chemo-attractant, a nutrient which is produced by the population itself, so that its distribution is ruled by a PDE where the density $\rho$ appears as a source term. Under the assumption that the production rate of this nutrient is much faster than the motion of the cells, we can assume that its distribution is ruled by a statical PDE with no explicit time-dependence, and gives rise to a system which is a gradient flow in the variable $\rho$ (the parabolic-parabolic case, where the time scale for the cells and for the nutrient are comparable, is also a gradient flow, in the product space $W_2\times L^2$, but we will not consider this case). Since we mainly concentrate on the case of bounded domains, in the Keller-Segel case the term $u[\rho]$ cannot be expressed as a convoluton and requires ad-hoc computations.

In all the paper, the estimates will be studied on a time-discretized version of these PDEs, consisting in the so-called JKO (Jordan-Kinderleherer-Otto) scheme, based on iterated optimization problems involving the Wasserstein distance $W_2$. We will first present 0-order estimates, on the $L^p$ and $L^\infty$ norms of the solution. This is just a translation into the JKO language of well-known properties of these equations. The main goal of this part is hence to popularize the techniques which allow to handle these estimates at a discrete level. However, there is an interest in studying these estimates at a discrete level, in particular since many numerical schemes have now been developed using the JKO approach, and these estimates can justify their convergence. 

Then, we will turn to 1st order estimates, i.e. on the gradient of the solutions. This includes in particular estimates on the BV norm of the solution $\rho$ and $W^{1,p}$-like estimates (in particular, the quantity that we will consider is related to $||\rho^{1/p}||_{W^{1,p}}$). We point out that a first result in this direction (estimates on the gradient for the JKO scheme) can be found in \cite{LeeJKO}, where the Lipschitz constant of the solution is bounded for the JKO scheme corresponding to a Fokker-Planck equation. However, the technique and the result in this paper are quite different than those in \cite{LeeJKO}.
 
The estimates we present are non-trivial and seem novel at least in the Keller-Segel case. In the Fokker-Planck case they correspond to a suitable integral version of the well-known Bakry-Emery estimate $|\nabla (P_t f)|\leq P_t (|\nabla f|)$ for drift-diffusion operators $P_t$ (see \cite{BakGenLed}). The interest in this case is to obtain them at a discrete level, on the JKO scheme. Note that, as the Bakry-Emery analogy suggests, these estimates should for sure be obtainable at a continuous level as well, but the computations are not at all easy (and most likely there is some term in the estimates which cannot easily be seen to have a sign, while the discrete-in-time approach allows to handle it without difficulties). This is an extra reason to study also the 0-order estimate at a discrete level, since some of these 1st order estimates require to use the corresponding 0-order ones. 

The JKO scheme provides, for fixed time step $\tau>0$, a sequence $(\rho^\tau_n)_n$, where each $\rho^\tau_{n+1}$ optimizes a functional depending on $\rho^\tau_n$. All the estimates that we provide are of the following form: a norm, or a quantity comparable to a norm, computed at $\rho^\tau_{n+1}$ can be bounded in terms of the same expression computed at $\rho^\tau_n$. Of course, we only want estimates which can be iterated (i.e. the possible increase passing from $\rho^\tau_{n}$ to $\rho^\tau_{n+1}$ should be of the order of $\tau$) and which do not explode when $\tau\to 0$. When a same quantity is really decreasing along iterations - in particular if an exponential decreasing behavior is obtained - this can be used to study the asymptotic behavior of the solution $\rho_t$ of the PDE as $t\to\infty$. When there is no decreasing behavior, but the increase is controlled, this can be used to justify local-in-time bounds which can provide compactness (to be used either for the convergence of numerical schemes or for other stability results, when data are varying, for instance).

The paper and the results are organized as follows. After this introduction, we present in Section 2 the background that we need to use about the JKO scheme for gradient flows in the Wasserstein space, including some useful tools such as displacement convexity and the five-gradients-inequality, together with general facts on optimal transportation and some details on the functionals that we will use. Section 3 presents the main estimates on the $L^p$ and $L^\infty$ norms of the solution of one step of the JKO scheme in the case where the functional is either a potential energy $\rho\mapsto\int V d\rho$ or an interaction energy $\rho\mapsto \frac 12 \int W(x-y) d\rho(x)d\rho(y)$. In particular we prove iterable bounds on the $L^p$ norm, for $p<\infty$, when $V$ or $W$ are Lipschitz, as well as better bounds (which include an $L^\infty$ estimate and an $L^p$ one which can be used in the limit $p\to\infty$ and also provide an exponential $L^\infty$ bound) in the case of the potential energy under second-order condition on $V$. As far as the $L^\infty$ norm is concerned, we also provide a uniform bound stating that the maximal value of $\rho e^V$ is decreasing in time under essentially no assumption on $V$, together with an adaptation for the interaction case, when $W$ is Lipschitz continuous. These results are summarized in Proposition \ref{summary warmup}. Section 4 concentrates, then, on the Keller-Segel case,  and reproduces, in this discretized JKO setting, a well-known two-dimensional result (based on \cite{DolPer} and \cite{JL}) which states that the $L^p$ norm and the $L^\infty$ norms do not grow too much in time as soon as we are in the subcritical regime, an assumption which allows to control the entropy with the total energy itself. This very technical result is contained in Theorem \ref{lpestimate}. Finally, Section 5 is devoted to higher-order estimates, which are the core of the paper. The results are expressed in terms of the following quantity: given a convex function $H:\R^d\to\R$, we consider $\int H(Z_\rho)d\rho$, where 
$$Z_\rho:=\frac{\nabla\rho}{\rho}+\nabla u[\rho].$$ 
When $H(z)=|z|^p$ and $\nabla u[\rho]$ is bounded, this quantity (usually denoted by $J_{(p)}(\rho)$), is comparable to $\int |\nabla \rho|^p \rho^{1-p}dx$, which can be related by simple algebraic computations to the $W^{1,p}$ norm of $\rho^{1/p}$. On these quantities we prove iterable bounds in the case of the potential energy when $V$ is semi-convex (Proposition \ref{prop5.3}); it is also useful to consider other convex functions $H$ than only powers, which can provide Lipschitz bounds and $W^{1,1}$ regularity. A variant of this result exists for interaction energies, possibly combined with potential energies (Proposition \ref{decrHZ}, where we assume semiconvexity of $V$ and $C^{1,1}$ regularity for $W$). The results for the potential and interaction cases are contained in the sub-section 5.1, while the sub-section 5.2 is devoted to the Keller-Segel case. In this case, the lack of semiconcavity for $u[\rho]$, which is only defined as a solution of an elliptic PDE involving $\rho$, prevents from having easy estimates on the error terms, and a different technique is required to bound them: finally, we obtain an iterable estimate on $J_{(p)}(\rho)$ only for $p<2$, and under the extra assumption that $\rho$ is bounded in an $L^r$ space, with $r=(4-p)/(2-p)$ depending on $p$ and explosing as $p\to 2$. This explains the interest for the 0-order estimates at the JKO level for Keller-Segel, which can indeed guarantee such an $L^r$ assumption.

\section{Preliminaries on the JKO scheme}

We refer to \cite{AmbGigSav,OTAM,GradFlowSurvey} for the whole theory about gradient flows in the Wasserstin space which justifies the few facts that we list below.

Whenever a functional $\mathcal F:\pical(\Omega)\to\R\cup\{+\infty\}$ is given, we fix a time step $\tau>0$ and a measure $\eta\in\pical(\Omega)$, and consider the following minimization problem
\begin{equation}\label{JKObasic}\min_\rho\quad \mathcal F(\rho)+\frac{W_2^2(\rho,\eta)}{2\tau},\end{equation}
where $W_2$ is the Wasserstein distance of order $2$ (see \cite{AmbGigSav,villani,OTAM}). Before goind on with the discussion, let us remind few important facts about optimal transport and the $W_2$ distance.

 If two probabilities $\mu,\nu\in\pical(\Omega)$ are given on a compact domain, the Monge-Kantorovitch problem reads as
$$ \inf\big\{ \int |x-T(x)|^2 d\mu\;:\;T:\Omega\to\Omega,\;T_\#\mu=\nu\big\}.$$
This problem, introduced by Monge \cite{Monge} has been reformulated by Kantorovich, \cite{Kant} in the following convex form
$$\inf\big\{ \int \!|x-y|^2 d\gamma\,:\,\gamma\in\pical(\Omega\times\Omega),\,(\pi_x)_\#\gamma=\mu, (\pi_y)_\#\gamma=\nu\big\}.$$
The square root of the optimal value above defines a distance on the set of probability measures on a given compact space (in case of non-compactness a condition on the second moments has to be added), which by the way metrizes the weak-* convergence of probabilities (on compact spaces, without compactness there is again a condition on the moments). Kantorovich also provided a dual formulation for the above minimization problem, that we can state, for simplicity, using the cost function $|x-y|^2/2$:
 $$
\frac12 W_2^2(\mu,\nu)=\sup\big\{ \int\varphi\,d\mu+\int\psi\,d\nu\;:\;\varphi(x)+\psi(y)\leq\frac 12|x-y|^2\big\}.
$$
It is possible to prove the existence of an optimal $\gamma$ and of an optimal pair $(\varphi,\psi)$, and, as soon as $\mu$ is absolutely continuous, there exists as well an optimal transport map $T$ (and the optimal $\gamma$ will be a measure on $\Omega\times\Omega$ concentrated on the graph of such a map $T$) . Moreover, the optimal $\varphi$, called {\it Kantorovich potential,} is Lipschitz continuous. and is connected to the optimal $T$ via $T(x)=x-\nabla\varphi(x)$ (we can also write $T=\nabla u$ with $u(x)=|x|^2/2-\varphi(x)$, and $u$ is a convex function, which is the result of the celebrated Brenier's Theorem, \cite{Brenier polar,Brenier91}).

 Using these tools from optimal transport theory, if $\Omega$ is compact and $\mathcal F$ is l.s.c. for the weak convergence of probability measures, then Problem \eqref{JKObasic} admits at least a solution. We will denote the set of solutions as
$Prox_{ \mathcal F}^{\tau} (\eta)$, mimicking the notations for the proximal operator which are used in hilbertian settings. In some cases (in particular if $\mathcal F$ is stricty convex) this proximal operator is single-valued (i.e. the minimizer is unique), but this will not be crucial in our analysis.

The JKO scheme (introduced in \cite{JKO}) consists in iterating the above minimization problem, i.e.starting from $\rho_0$ and, for fixed $\tau>0$, defining a sequence $(\rho^\tau_n)_n$ satisfying
$$\rho^\tau_0=\rho_0,\quad \rho^\tau_{n+1}\in Prox_{ \mathcal F}^{\tau} (\rho^\tau_n).$$

The above sequence can be used to define a curve of measure $\rho^\tau(t)$ for $t\in [0,T]$, with $\rho^\tau(n\tau)=\rho^\tau_n$ (for instance by piecewise constant interpolation). Under suitable conditions on $\mathcal F$ it can be proven that he curves $\rho^\tau$ uniformly converge (as curves valued into the Wasserstein space) to a continuous curve $\rho$ which is a solution of the PDE  
$$\partial_t\rho-\Delta\rho-\nabla\cdot (\rho\nabla \frac{\delta \mathcal F}{\delta \rho})=0$$
(complemented with no-flux boundary conditions and the intial condition $\rho_0$), where $\frac{\delta \mathcal F}{\delta \rho}$ is the first variation of the functional $\mathcal F$ (see Chapter 7 in \cite{OTAM}).

In this paper we will always consider the case where $\mathcal F=\Ent+\mathcal G$, and 
$$\Ent(\rho)=\begin{cases}\int \rho\log\rho \,dx& \mbox{ if $\rho\ll\lcal^d$}\\
						+\infty&\mbox{ otherwise}\end{cases}$$
is the entropy functional. For the functional $\mathcal G$, we will often write $u[\rho]=\delta \mathcal G/\delta \rho$ and we will consider three cases:
\begin{itemize}
\item either we consider $\mathcal G(\rho)=\int Vd\rho$, for a fixed function $V:\Omega\to\R$ acting as a potential, in which case we have $u[\rho]=V$; this case will be called the Fokker-Planck case;
\item either we consider $\mathcal G(\rho)=\int W(x-y)d\rho(x)d\rho(y)$, for an even function $W:\R^d\to\R$, in which case we have $u[\rho]=W*\rho$; this case will be called the interaction case; it can be mixed with the previous one by considering $u[\rho]=V+W*\rho$, if explicitly indicated;
\item finally, we consider a particular case arising from mathematical biology: we take $\mathcal G(\rho):=-\frac{\chi}{2} \int |\nabla h[\rho]|^2dx=-\frac{\chi}{2} \int h[\rho] d\rho,$
where $\chi>0$ is a given constant and $h[\rho]$ is the only solution of 
$$\begin{cases}-\Delta h=\rho &\mbox{in } \Omega,\\
			h=0 &\mbox{on } \partial\Omega\end{cases}.$$
Note the negative sign before the integral in the definition of $\mathcal G$.
It is not difficult to check that we have
$$\frac{\delta\mathcal G}{\delta \rho}=-\chi h[\rho].$$
Indeed $h[\rho+\ep\delta\rho]=h[\rho]+\ep h[\delta\rho]$ and 
$$\mathcal G(\rho+\ep\delta\rho)=\mathcal G(\rho)-\ep\chi \int \nabla h[\rho]\cdot \nabla h[\delta\rho]dx+O(\ep^2)=\mathcal G(\rho)+\ep\chi\int h[\rho] \Delta h[\delta\rho]dx+O(\ep^2),$$
which allows to conclude using $ \Delta h[\delta\rho]=-\delta\rho$.
This case will be called the Keller-Segel case and is motivated by chemotaxis modeling (see \cite{KS,HP} for the description of the model). In dimension $d=2$, it is well-known that this model is well-posed and that there is existence (both for the minimization problems in the JKO scheme and for the continuous-in-time PDE, with global-in time existence) as soon as $\chi<8\pi$. This  is due to the a crucial inequality which states that we can bound $\Ent(\rho)$ in terms of $\Ent(\rho)+\mathcal G(\rho)$ (the problem being that $\mathcal G$ is in general not bounded from below, but $\Ent+\mathcal G$ is bounded from below on probability measures as soon as $\chi\leq 8\pi$: this implies 
$$\Ent(\rho)\leq A+B(\Ent(\rho)+\mathcal G(\rho))$$
with $B=8\pi/(8\pi-\chi)$). For the mathematical analysis of the Keller-Segel PDE and of the corresponding JKO scheme we refer to \cite{BlaCalCar,BCKKLL,BlaCarMas,CC,CarLisMai} and to Chapter 5 in \cite{Per-book}. \end{itemize}

The reader may need to be convinced of the bound from below of $\Ent(\rho)+\mathcal G(\rho)$ when $\chi\leq8\pi$ in dimension 2, if we are on a bounded domain and $h[\rho]$ is defined with Dirichlet boundary conditions on $\partial\Omega$. This can be seen by observing the following facts. The logarithmic Hardy-Littlewood-Sobolev inequality provides a uniform bound from below on $\int \rho\log\rho \,dx-4\pi\int \tilde h[\rho]\, d\rho$ where $\tilde h[\rho](x):=-(2\pi)^{-1}\int_{\R^2} \log(|x-y|)d\rho(y)$. Noting that we have $-\Delta \tilde h[\rho]=\rho$ and $\tilde h[\rho]+\log(R)/(2\pi)\geq 0$ on $\Omega$ (where $R$ is the diameter of $\Omega$), we deduce $h[\rho]\leq \tilde h[\rho]+\log(R)/(2\pi)$ (since $\tilde h[\rho]+\log(R)/(2\pi)-h[\rho]$ is harmonic and nonnegative on the boundary). 
Hence, for $\chi\leq 8\pi$ we have $\Ent(\rho)+\mathcal G(\rho)\geq \int \rho\log\rho \,dx-4\pi\int h[\rho]\, d\rho\geq \int \rho\log\rho \,dx-4\pi\int \tilde h[\rho]\, d\rho -2\log R$ and this provides the desired bound from below.
\bigskip

A useful tool, introduced in \cite{demesave} and already used in the framework of the JKO scheme in the same paper in order to obtain BV estimates is the so-called {\it five-gradients inequality} (note that this name is not present in \cite{demesave}, but the inequality has been popularized under this name later on). This inequality states the following:

\begin{lemma}\label{5GI}
Let $\Omega\subset\R^d$ be bounded and convex, $\rho,\eta\in W^{1,1}(\Omega)$ be two  probability densities and $H\in C^1(\R^d)$ be a radially symmetric convex function. %Suppose, moreover, that one of the following conditions is satisfied:
%\begin{enumerate}
%\item[(a)] $\Omega$ is a bounded uniformly convex set;
%\item[(b)] the probability densities $\ro$ and $g$ are compactly supported in $\Omega$.
%\end{enumerate}
Then the following inequality holds
\begin{equation}\label{maine1}
\int_{\Omega} \Big(\nabla \rho\cdot\nabla H(\nabla \varphi) +\nabla \eta\cdot \nabla H(\nabla \psi)\Big)\,\dd x\ge 0,
\end{equation}
where $\varphi$ and $\psi$ are the corresponding Kantorovich potentials.
\end{lemma}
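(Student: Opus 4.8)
\emph{Proof idea.} The plan is to recognise the left-hand side of \eqref{maine1} as (a lower bound for) the derivative at $\varepsilon=0$ of the map $\varepsilon\mapsto\Ent(\mu_\varepsilon)+\Ent(\nu_\varepsilon)$, where $\Ent(f)=\int f\log f\,\dd x$ is the entropy, $\mu:=\rho\,\lcal^d$, $\nu:=\eta\,\lcal^d$, and
$$\mu_\varepsilon:=(G_\varepsilon)_\#\mu,\qquad \nu_\varepsilon:=(\tilde G_\varepsilon)_\#\nu,\qquad G_\varepsilon:=\Id+\varepsilon\,\nabla H(\nabla\varphi),\qquad \tilde G_\varepsilon:=\Id+\varepsilon\,\nabla H(\nabla\psi)$$
— the maps $G_\varepsilon,\tilde G_\varepsilon$ being exactly where all five gradients of the name ($\nabla\rho,\nabla\eta,\nabla H,\nabla\varphi,\nabla\psi$) appear. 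First I would reduce, by a standard approximation argument (mollifying $\rho$ and $\eta$ and pushing their supports slightly inside $\Omega$ by a small contraction toward an interior point, mollifying $H$ and adding $\delta|z|^2$ to it, and if needed approximating $\Omega$ by smooth uniformly convex domains), to the case in which $\rho,\eta$ are smooth and bounded between two positive constants and $H$ is smooth and uniformly convex; then Caffarelli's regularity theory for the second boundary value problem of the Monge--Amp\`ere equation guarantees that the Brenier map $T=\Id-\nabla\varphi$ from $\mu$ to $\nu$, its inverse $S=\Id-\nabla\psi$, and the potentials $u=\tfrac12|x|^2-\varphi$, $\varphi$, $\psi$ are smooth up to the boundary, so that $G_\varepsilon,\tilde G_\varepsilon$ are diffeomorphisms for $|\varepsilon|$ small. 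The inequality for the original data then follows by letting the approximation parameters tend to $0$, using stability of optimal transport together with dominated convergence (the gradients of the potentials stay bounded and $\nabla\rho,\nabla\eta\in L^1$).

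In this smooth setting, the change-of-variables formula for push-forwards gives $\Ent(\mu_\varepsilon)=\Ent(\mu)-\int_\Omega\rho\log\det DG_\varepsilon\,\dd x$, hence, differentiating at $\varepsilon=0$ and integrating by parts,
$$\frac{\dd}{\dd\varepsilon}\Big|_{0}\Ent(\mu_\varepsilon)=-\int_\Omega\rho\,\nabla\!\cdot\!\big(\nabla H(\nabla\varphi)\big)\,\dd x=\int_\Omega\nabla\rho\cdot\nabla H(\nabla\varphi)\,\dd x-\int_{\partial\Omega}\rho\,\nabla H(\nabla\varphi)\cdot\nn\,\dd\mathcal{H}^{d-1},$$
and likewise for $\nu_\varepsilon$. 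On $\partial\Omega$ one has $\nabla\varphi(x)\cdot\nn(x)=(x-T(x))\cdot\nn(x)\ge0$ because $\Omega$ is convex and $T(x)\in\overline\Omega$; and since $H$ is radial and $C^1$ one must have $\nabla H(z)=\lambda(z)\,z$ with $\lambda(z)\ge0$ (radiality forces the right radial derivative of $H$ to vanish at $0$, and convexity makes it nondecreasing, hence nonnegative), so $\nabla H(\nabla\varphi)\cdot\nn\ge0$ on $\partial\Omega$ too; as $\rho\ge0$ the boundary integrals are nonnegative and therefore
$$\int_\Omega\nabla\rho\cdot\nabla H(\nabla\varphi)\,\dd x+\int_\Omega\nabla\eta\cdot\nabla H(\nabla\psi)\,\dd x\ \ge\ \frac{\dd}{\dd\varepsilon}\Big|_{0}\big[\Ent(\mu_\varepsilon)+\Ent(\nu_\varepsilon)\big].$$

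It remains to show that this derivative is nonnegative. Using the classical identity $\nabla\psi\circ T=-\nabla\varphi$ (so that $D^2\psi\circ T=-D^2\varphi\,(DT)^{-1}$ after differentiation), the evenness of $D^2H$, the Monge--Amp\`ere equation $\rho=(\eta\circ T)\det DT$, and $DT=\Id-D^2\varphi$, one changes variables $y=T(x)$ in the term coming from $\nu_\varepsilon$ and arrives at
$$\frac{\dd}{\dd\varepsilon}\Big|_{0}\big[\Ent(\mu_\varepsilon)+\Ent(\nu_\varepsilon)\big]=\int_\Omega\rho\,\mathrm{tr}\!\Big(D^2H(\nabla\varphi)\,B^2(\Id-B)^{-1}\Big)\,\dd x,\qquad B:=D^2\varphi.$$
Now $\Id-B=D^2u=DT$ is positive definite (Brenier), so $B\prec\Id$ and, diagonalising the symmetric matrix $B$, the matrix $B^2(\Id-B)^{-1}$ has eigenvalues $\lambda_i^2/(1-\lambda_i)\ge0$ and is positive semidefinite; $D^2H(\nabla\varphi)$ is positive semidefinite by convexity of $H$; and the trace of the product of two symmetric positive semidefinite matrices is nonnegative. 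This proves \eqref{maine1}.

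The genuinely delicate point is the first, reduction step: the variational computation above needs $\varphi$ and $\psi$ to be of class $C^2$ up to $\partial\Omega$, which is not granted under the mere $W^{1,1}$ assumption, so one has to regularise the data (and possibly the domain), invoke Caffarelli's theory in the regularised problem, and check that \eqref{maine1} is stable in the limit; once one is in the smooth regime, the two entropy-derivative computations and the final linear-algebra fact are completely routine.
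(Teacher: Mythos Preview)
The paper does not actually prove Lemma~\ref{5GI}: it is quoted from \cite{demesave}, and the only comment the paper adds is that one first proves the inequality for $H\in C^2$ (where second derivatives of $H$ enter) and then passes to $H\in C^1$ by approximation. Your argument is correct and is, up to packaging, precisely the proof of \cite{demesave}: regularise so that Caffarelli's theory yields $\varphi,\psi\in C^2(\overline\Omega)$, integrate by parts, control the boundary terms using convexity of $\Omega$ together with the fact that $\nabla H(z)$ is a nonnegative multiple of $z$ for radial convex $H$, and reduce the remaining bulk term, after the change of variables $y=T(x)$ and the identity $\nabla\psi\circ T=-\nabla\varphi$, to $\int_\Omega\rho\,\mathrm{tr}\big(D^2H(\nabla\varphi)\,B^2(\Id-B)^{-1}\big)\,\dd x\ge0$ with $B=D^2\varphi$.

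The only difference is cosmetic: you interpret $-\int_\Omega\rho\,\nabla\!\cdot\!(\nabla H(\nabla\varphi))\,\dd x$ as the derivative at $\varepsilon=0$ of $\Ent\big((\Id+\varepsilon\nabla H(\nabla\varphi))_\#\mu\big)$, whereas \cite{demesave} simply integrates $\int_\Omega\nabla\rho\cdot\nabla H(\nabla\varphi)\,\dd x$ by parts and works directly with the divergence term. The two presentations are line-by-line equivalent; your entropy viewpoint is a pleasant mnemonic but does not shorten or alter the argument. One small wording issue: ``pushing the supports slightly inside $\Omega$'' and ``bounded between two positive constants'' are in tension if read on the original $\Omega$; what you mean (and what \cite{demesave} does) is to work on a slightly smaller smooth uniformly convex domain on which the regularised densities are bounded away from zero, so that Caffarelli's global $C^{2,\alpha}$ regularity applies there --- you may want to phrase that step more carefully.
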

Note that the above result is first proven for $H\in C^2$ (second derivatives are used in the proof) and then, by approximation, it stays true for $H\in C^1$; the same approximation can also be applied to the quite common case $H\in C^1(\R^d\setminus\{0\})$, setting $\nabla H(0):=0$ (which is coherent with the fact that $H$ is radial), and the result stays true. In particular, we will sometimes apply this to $H(z)=|z|$.

Another useful notion in the study of gradient flow is that of displacement convexity, introduced by McCann in \cite{MC}. It corresponds to the convexity of a functional along the geodesics of the metric space $(\pical(\Omega),W_2)$. 

\begin{definition} Let $\mathcal H:\pical(\Omega)\to\R\cup\{+\infty\}$ be a functional defined on probability measures on a compact convex domain $\Omega$. We say that $\mathcal H$ is displacement convex if for every pair of measures $\rho,\eta\in \pical(\Omega)$ there exists a curve $\rho_t$ which is geodesic for the $W_2$ distance, which connects $\rho$ and $\nu$ (i.e. $\rho_0=\rho$, $\rho_1=\nu$) and such that $\mathcal H(\rho_t)\leq (1-t)\mathcal H(\rho_0)+t\mathcal H(\nu)$.
\end{definition}
We recall that, whenever $\rho$ is absolutely continuous, the geodesic curve between $\rho$ and $\nu$ is unique and is given by
$$\rho_t=(id-t\nabla \varphi)_\#\rho,$$
where $\varphi$ is the Kantorovich potential between $\rho$ and $\nu$ for the cost $c(x,y)=\frac 12 |x-y|^2$. Indeed, $id-t\nabla \varphi$ is the convex interpolation between the identity map and the optimal transport map $T=id-\nabla\varphi$. 

In \cite{MC} McCann provided the condition for the displacement convexity of functionals of the form $\mathcal H(\rho):=\int F(\rho(x))dx$.

\begin{definition} Let $F$ be a convex increasing function on $[0,+\infty)$ such that $F(0)=0$. Then we say that $F$ satisfies the $d$-McCann condition if $s \mapsto F(\frac 1{s^d})s^d $ is convex and decreasing.
\end{definition}

Note that $s \mapsto F(\frac 1{s^d})s^d $ being convex and decreasing is enough to guarantee that $F$ itself is convex.

The main result of \cite{MC} is indeed the fact that, if $F$ satisfies the $d$-McCann condition, then the functional $\mathcal H$, defined via $\mathcal H(\rho):=\int F(\rho(x))dx$, is displacement convex in dimension $d$. In particular, this applies to $F(s)=s^q$, $q>1$, and to $F(s)=s\log s$ (hence to $\mathcal H=\Ent $).

In \cite{AmbGigSav} the general theory for gradient flows in metric space is presented, and the assumption of geodesic convexity is crucial, in particular for uniqueness and stability. Here we do not insist on this aspect (by the way, the functional $\mathcal G$ in the Keller-Segel case is in general not displacement convex), but we are interested in another property related to displacement convexity. As it was first observed in \cite{MatMcCSav}, estimates can be provided on $\mathcal H(\rho^\tau_{n+1})$ in terms of $\mathcal H(\rho^\tau_{n})$ when $\mathcal H$ is displacement convex, even when the gradient flow that we are considering is the gradient flow of another functional $\mathcal F$ (in the case $\mathcal F= \mathcal H$, the inequality  $\mathcal H(\rho^\tau_{n+1})\leq \mathcal H(\rho^\tau_{n})$ is trivial). The key point is to use the following general estimate.

\begin{lemma}\label{estfromdisplconv} Let us consider two absolutely continuous measures $\rho, \eta \in \mathscr{P}(\Omega)$, and a convex function $F$, such that $F(0)=0$ satisfying the $d$-McCann condition. Suppose that the density of $\rho$ is Lischitz continuous, and that $\Omega$ is convex. Then, denoting by $\varphi$ the Kantorovich potential in the transport from $\rho $ to $\eta$, we have
\begin{equation}\label{eqn:F} \int_{\Omega} F( \eta) \, dx \geq \int_{\Omega} F(\rho) \, dx - \int_{\Omega} \rho \nabla(F'(\rho))\cdot \nabla \varphi \, dx.\end{equation}

%Moreover, if $\Omega$ is convex and $\rho \in W^{1,1}(\Omega) \cap L^{\infty}(\Omega)$, whenever $F' \in C^1([0,t])$ for every $t$, we have also that

%\begin{equation}\label{eqn:F2}  \int_{\Omega} [\rho F'(\rho)- F(\rho) ] \Delta_a \varphi \, dx \geq  - \int_{\Omega} \nabla \rho \cdot \nabla \varphi  F''(\rho) \rho  \, dx;\end{equation}
\end{lemma}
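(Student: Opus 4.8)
\medskip
\noindent\emph{Proposed strategy.} Inequality \eqref{eqn:F} is the ``above the tangent line'' estimate for the displacement convex functional $\mathcal H(\rho):=\int_\Omega F(\rho)\,\dd x$, so the plan is to use convexity of $t\mapsto\mathcal H(\rho_t)$ along the $W_2$-geodesic $(\rho_t)$ from $\rho$ to $\eta$ and to identify its right derivative at $t=0$ with the right-hand side of \eqref{eqn:F}. I would first assume $\int_\Omega F(\eta)\,\dd x<+\infty$ (otherwise there is nothing to prove), write $u=\tfrac12|\cdot|^2-\varphi$ (convex), $T:=\nabla u=\id-\nabla\varphi$ for the optimal map, $T_t:=(1-t)\id+tT=\id-t\nabla\varphi=\nabla\big((1-t)\tfrac12|\cdot|^2+tu\big)$, and $\rho_t:=(T_t)_\#\rho$, so $\rho_0=\rho$, $\rho_1=\eta$. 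Since $(1-t)\tfrac12|\cdot|^2+tu$ is strictly convex for $t<1$, the map $T_t$ is injective, and McCann's change-of-variables lemma (\cite{MC}) gives
\[
g(t):=\mathcal H(\rho_t)=\int_\Omega F\!\Big(\frac{\rho(x)}{J_t(x)}\Big)J_t(x)\,\dd x,\qquad J_t(x):=\det\big((1-t)I+tD^2u(x)\big),
\]
with $D^2u$ the $\mathcal L^d$-a.e.\ defined Alexandrov Hessian of $u$.

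\medskip
Next I would show that $t\mapsto J_t(x)^{1/d}$ is concave on $[0,1]$ (composition of the affine matrix-valued path $t\mapsto(1-t)I+tD^2u(x)$, with values in positive semidefinite matrices, with the concave function $M\mapsto(\det M)^{1/d}$), and hence that the integrand, which equals $\rho(x)\,\Phi\big((J_t(x)/\rho(x))^{1/d}\big)$ with $\Phi(s):=s^dF(s^{-d})$ convex and decreasing by the $d$-McCann condition (and $\equiv0$ where $\rho(x)=0$), is convex in $t$. Thus $g$ is convex on $[0,1]$ (continuity at $t=1$ following from lower semicontinuity of $\mathcal H$ together with $g(t)\le(1-t)g(0)+tg(1)$), so $g(1)\ge g(0)+g'(0^+)$. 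Since $t\mapsto J_t(x)$ is a polynomial with $J_0(x)=1$ and $\tfrac{\dd}{\dd t}J_t(x)\big|_{t=0}=\mathrm{tr}\,D^2u(x)-d=:-\Delta_A\varphi(x)$ (the Alexandrov Laplacian of $\varphi$), differentiating under the integral sign — justified because the difference quotients are monotone in $t$ and dominated in $L^1$, $\rho$ being bounded — I would obtain
\[
g'(0^+)=\int_\Omega L_F(\rho)\,\Delta_A\varphi\,\dd x,\qquad L_F(s):=sF'(s)-F(s)\ \ (\ge0,\ \text{nondecreasing, }L_F'(s)=sF''(s)).
\]

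\medskip
It then remains to check $\int_\Omega L_F(\rho)\Delta_A\varphi\,\dd x\ge-\int_\Omega\rho\,\nabla(F'(\rho))\cdot\nabla\varphi\,\dd x$. Here I would use that $\rho\,\nabla(F'(\rho))=\rho F''(\rho)\nabla\rho=L_F'(\rho)\nabla\rho=\nabla(L_F(\rho))$, which is bounded since $\rho$ is Lipschitz, so $\zeta:=L_F(\rho)\in W^{1,\infty}(\Omega)$ with $\zeta\ge0$; and that, $\varphi$ being Lipschitz and semiconcave on the bounded convex $\Omega$, its distributional Laplacian is a \emph{finite} measure $\Delta\varphi=\Delta_A\varphi\,\dd x-\mu$ with $\mu\ge0$, and $\nabla\varphi\in BV(\Omega)\cap L^\infty(\Omega)$. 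Then
\[
\int_\Omega\zeta\,\Delta_A\varphi\,\dd x\ge\int_\Omega\zeta\,\dd(\Delta\varphi)=-\int_\Omega\nabla\zeta\cdot\nabla\varphi\,\dd x+\int_{\partial\Omega}\zeta\,\big((\nabla\varphi)^{\mathrm{in}}\!\cdot n\big)\,\dd\mathcal H^{d-1},
\]
the last equality being the Gauss--Green formula and $(\nabla\varphi)^{\mathrm{in}}$ the inner trace. The boundary term is $\ge0$: indeed $\zeta\ge0$, and since $T=\id-\nabla\varphi$ maps $\Omega$ into $\bar\Omega$ one has $(\nabla\varphi)^{\mathrm{in}}(x)\cdot n(x)=x\cdot n(x)-(\nabla u)^{\mathrm{in}}(x)\cdot n(x)\ge0$ for $x\in\partial\Omega$, because $(\nabla u)^{\mathrm{in}}(x)\in\bar\Omega$ and convexity of $\Omega$ forces $(y-x)\cdot n(x)\le0$ for all $y\in\bar\Omega$. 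Combining, $g'(0^+)\ge-\int_\Omega\nabla(L_F(\rho))\cdot\nabla\varphi\,\dd x=-\int_\Omega\rho\,\nabla(F'(\rho))\cdot\nabla\varphi\,\dd x$, and therefore $\int_\Omega F(\eta)\,\dd x=g(1)\ge g(0)+g'(0^+)\ge\int_\Omega F(\rho)\,\dd x-\int_\Omega\rho\,\nabla(F'(\rho))\cdot\nabla\varphi\,\dd x$, which is \eqref{eqn:F}.

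\medskip
The hard part will be this last paragraph: making the Gauss--Green identity and the sign of the boundary term rigorous when $\varphi$ is only Lipschitz and semiconcave, so that $D^2\varphi$ is just a measure. I would handle it by (i) showing $\Delta\varphi$ has finite total variation on $\Omega$ — from $\Delta u(\Omega_\varepsilon)=\int_{\partial\Omega_\varepsilon}\nabla u\cdot n\le\Lip(u)\,\mathrm{Per}(\Omega_\varepsilon)$ staying bounded as $\varepsilon\to0$, for $u$ convex and Lipschitz up to $\partial\Omega$ on the bounded convex $\Omega$ — so that $\nabla\varphi\in BV(\Omega)\cap L^\infty(\Omega)$ has a well-defined normal inner trace and Gauss--Green applies against the Lipschitz test function $L_F(\rho)$; and (ii) choosing the Brenier potential with $\partial u(y)\subseteq\bar\Omega$ for every $y\in\Omega$, which forces $(\nabla u)^{\mathrm{in}}\in\bar\Omega$ $\mathcal H^{d-1}$-a.e.\ on $\partial\Omega$ and hence gives the needed sign.
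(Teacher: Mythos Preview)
Your proposal is correct and follows the same strategy as the paper: use displacement convexity of $\mathcal H(\rho)=\int F(\rho)\,\dd x$ along the $W_2$-geodesic to get $\mathcal H(\eta)\ge\mathcal H(\rho)+g'(0^+)$, then show $g'(0^+)\ge-\int\rho\,\nabla(F'(\rho))\cdot\nabla\varphi\,\dd x$, with the inequality (rather than equality) coming from the singular part of $D^2\varphi$. The paper gives only a formal computation via the continuity equation and defers the rigorous justification to Appendix~A2 of \cite{BlaMosSan}; you instead carry out the details explicitly through McCann's change-of-variables formula and a careful Gauss--Green argument handling both the singular measure $\mu$ and the boundary term, which is a more self-contained execution of the same idea.
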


\begin{proof} Using the displacement convexity of $\mathcal H$, we have 
$$\mathcal H(\eta)-\mathcal H(\rho)=\mathcal H(\rho_1)-\mathcal H(\rho_0)\geq \frac{d}{dt}\mathcal H(\rho_t)_{|t=0},$$
where $(\rho_t)_t$ is the geodesic interpolation between $\eta=\rho_1$ and $\rho=\rho_0$. 
We just need to prove that we have 
\begin{equation}\label{dHdt}\frac{d}{dt}\mathcal H(\rho_t)_{|t=0}\geq  - \int_{\Omega} \rho \nabla(F'(\rho))\cdot \nabla \varphi \, dx.
\end{equation}
A formal computation gives
\begin{equation}\label{dHdt2}\frac{d}{dt}\mathcal H(\rho_t)=\int F'(\rho_t)\partial_t\rho_t=\int \nabla(F'(\rho_t))\cdot v_td\rho_t,\end{equation}
where $v_t$ is the velocity field of the geodesic curve $(\rho_t)_t$, solving $\partial_t\rho_T+\nabla\cdot (\rho_tv_t)=0$. The equality $v_0=-\nabla\varphi$ provides the result.

The reader should be aware that this argument is only formal because of lack of regularity. Yet, everything could be justified by a precise computation of the density of the measure $\rho_t$. This is classical but technicaly delicate, and it is done, for instance, in Appendix A2 in \cite{BlaMosSan}. In particular, the possible presence of singular parts in the second derivatives of $\varphi$ justifies the inequality in \eqref{dHdt}, instead of the equality we found using \eqref{dHdt2}. \end{proof}

Then, this can provide estimates on $\mathcal F(\rho)$ once we suppose $\rho\in Prox_{ \mathcal F}^{\tau} (\eta)$ and use the optimality condition in the optimization problem solved by $\rho$, which is of the form
$$\frac\varphi\tau+\frac{\delta F}{\delta \rho}=const \mbox{ on }\{\rho>0\},$$
(see Chapter 7 in \cite{OTAM} for precise statements and justifications on these optimality conditions). The consequence in the case $\mathcal F=\Ent+\mathcal G$ is presented in the next section.

\section{Warm-up: $L^p$ and $L^\infty$ estimates for Fokker-Planck and interaction equations}

In this section we present various computations leading to $L^p$ estimates (including $p=\infty$) for the simplest case that we consider, i.e. the linear Fokker-Planck case with $\mathcal G(\rho)=\int Vd\rho$. We will then adapt them to the case where the first variation $u[\rho]$ depends on $\rho$ (while for the Fokker-Planck case we do have $u[\rho]=V$ for every $\rho$) but in a very simple way, by convolution.

We start from the following result.

\begin{proposition}\label{F''} Let $\eta$ be a probability measure, $\Omega\subset\R^d$ a convex domain, and $F\in C([0,\infty))\cap C^2((0,\infty))$ be a convex function satisfying the $d-$McCann condition. Let $\mathcal G:\pical(\Omega)\to\overline\R$ be a given functional, $\rho \in Prox_{ \Ent + \mathcal G}^{\tau} (\eta)$, and $u[\rho]:=\delta\mathcal G/\delta\rho$. Suppose that $u[\rho]$ is Lipschitz continuous. Then $\rho$ is also Lipschitz continuous, and bounded from below by a positive constant, and, if $\Omega$ is convex, we have
\begin{equation}\label{eqn:stimaKS} \int_{\Omega} F(\eta) \, dx \geq \int_{\Omega} F(\rho) \, dx + \tau \int_{\Omega} \left(F''(\rho)| \nabla\rho|^2+ \rho F''(\rho) \nabla\rho\cdot\nabla u[\rho])\right). \end{equation}

\end{proposition}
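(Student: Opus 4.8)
The plan is to combine the Euler--Lagrange condition for the JKO step with the displacement-convexity estimate of Lemma~\ref{estfromdisplconv}.

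First I would extract the regularity of $\rho$ from optimality. Since $\rho$ minimizes $\Ent(\cdot)+\mathcal G(\cdot)+\tfrac1{2\tau}W_2^2(\cdot,\eta)$ among probability densities on $\Omega$, the optimality condition (Chapter~7 in \cite{OTAM}) reads
\begin{equation}\label{ELprop}
\log\rho+1+u[\rho]+\frac{\varphi}{\tau}=C\quad\text{on }\{\rho>0\},
\end{equation}
with $\varphi$ a Kantorovich potential in the transport from $\rho$ to $\eta$ and $C$ a constant, while on $\{\rho=0\}$ the remaining terms are $\ge C$. Because $\log\rho\to-\infty$ as $\rho\to0^+$, the latter is impossible on a set of positive measure (a standard first-variation/competitor argument: injecting an infinitesimal amount of mass into $\{\rho=0\}$ strictly lowers the entropy), so $\rho>0$ a.e., \eqref{ELprop} holds a.e.\ on $\Omega$, and
\[
\rho=\exp\!\Big(C-1-u[\rho]-\tfrac{\varphi}{\tau}\Big)\quad\text{a.e.}
\]
On the bounded convex set $\Omega$ the potential $\varphi$ is Lipschitz (and, after normalization, bounded), and $u[\rho]$ is Lipschitz by hypothesis, so the exponent is bounded and Lipschitz; hence $\rho$ has a Lipschitz representative bounded below by a positive constant. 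In particular \eqref{ELprop} is an identity between Lipschitz functions, valid everywhere on $\Omega$.

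Now $\rho$ is a Lipschitz density bounded away from $0$, so Lemma~\ref{estfromdisplconv} (with the same $F$, which satisfies the $d$-McCann condition) gives
\[
\int_\Omega F(\eta)\,dx\;\ge\;\int_\Omega F(\rho)\,dx-\int_\Omega\rho\,\nabla\big(F'(\rho)\big)\cdot\nabla\varphi\,dx.
\]
Differentiating \eqref{ELprop} (legitimate a.e., all terms being Lipschitz and $\rho$ bounded below) yields $\nabla\varphi=-\tau\big(\tfrac{\nabla\rho}{\rho}+\nabla u[\rho]\big)$ a.e.\ on $\Omega$; substituting and using $\nabla(F'(\rho))=F''(\rho)\nabla\rho$ (valid a.e.\ since $F\in C^2((0,\infty))$ and $\rho$ is Lipschitz and bounded below),
\[
-\int_\Omega\rho\,\nabla(F'(\rho))\cdot\nabla\varphi\,dx
=\tau\int_\Omega F''(\rho)\,\nabla\rho\cdot\big(\nabla\rho+\rho\,\nabla u[\rho]\big)\,dx
=\tau\int_\Omega\Big(F''(\rho)|\nabla\rho|^2+\rho F''(\rho)\,\nabla\rho\cdot\nabla u[\rho]\Big)\,dx,
\]
which is exactly \eqref{eqn:stimaKS}.

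I expect the only genuinely delicate part to be the first step: justifying carefully that the minimizer has full support and a Lipschitz, strictly positive density (the competitor argument against $\{\rho=0\}$, the Lipschitz regularity of $\varphi$ up to $\partial\Omega$), and then that \eqref{ELprop} is an equality between Lipschitz functions so that all subsequent differentiations and the chain rule through $F'$ are legitimate a.e.; everything after that is a one-line substitution. A minor point: if $\eta$ is merely a probability measure, $\int_\Omega F(\eta)\,dx$ should be read as the lower-semicontinuous displacement-convex extension of $\rho\mapsto\int F(\rho)\,dx$ evaluated at $\eta$; this is immaterial for the iterations of the JKO scheme, where $\eta=\rho^\tau_n$ is itself Lipschitz for $n\ge1$.
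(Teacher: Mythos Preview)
Your proposal is correct and follows essentially the same approach as the paper: derive the Euler--Lagrange condition $\log\rho+u[\rho]+\varphi/\tau=\text{const}$, deduce from it that $\rho$ is Lipschitz and bounded away from zero, differentiate to express $\nabla\varphi$ in terms of $\nabla\rho/\rho+\nabla u[\rho]$, and substitute into Lemma~\ref{estfromdisplconv}. The only difference is that you spell out the positivity/regularity argument explicitly, whereas the paper simply refers to Chapter~8 of \cite{OTAM} for these details.
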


\begin{proof}
This estimate is a combination of the one in Lemma \ref{estfromdisplconv} with the optimality conditions characterizing $\rho$. Indeed, we have (see Chapter 8 in \cite{OTAM} and adapt the computations which are just presented there in the case $u[\rho]=V$) 
$$\log\rho+u[\rho]+\frac\varphi\tau=const\quad\mbox{hence}\quad \frac{\nabla\rho}{\rho}+\nabla u[\rho]+\frac{\nabla\varphi}{\tau}=0,$$
these equalities being true a.e. on $\Omega$ since we have $\rho>0$ a.e. (for this, see the proof in  Chapter 8 in \cite{OTAM}). As a consequence, $\log\rho$ is Lipschitz continuous, and we can apply the result of Lemma \ref{estfromdisplconv}, replacing $\nabla\varphi$ with $-\tau(\nabla u[\rho]-\nabla\rho/\rho)$.
\end{proof}

Let us analyze first the purely linear Fokker-Planck case, i.e. the case where $u[\rho]=V$ does not depend on $\rho$ and let us concentrate on $L^p$ estimates.

\begin{proposition}\label{Lp on truly rho} Let $\eta\in L^p$, with $p<\infty$, be a probability measure and $\mathcal G:\pical(\Omega)\to\R$ be defined via $\mathcal G(\rho):=\int Vd\rho$ for a given function $V:\Omega\to\R$. Take $\rho \in Prox_{ \Ent + \mathcal G}^{\tau} (\eta)$ and suppose $\Omega$ convex and $V$ Lipschitz continuous.
Then $\rho $ is Lipschitz continuous and bounded from below by a positive constant, and
\begin{itemize}

\item denoting by $\Lip(V)$ the Lipschitz constant of $V$, we have
$$\int \eta^p \,dx\geq  \left(1-\tau\frac{p(p-1)}{4}\Lip (V)^2\right)\int \rho^p\,dx;$$
\item if $\Delta V\leq A$ in $\Omega$ and $\nabla V\cdot n\geq 0$ on $\partial\Omega$, then
$$\int \eta^p \,dx\geq \left(1-\tau(p-1)A\right) \int\rho^p\,dx.$$
\end{itemize}

In the case $p=\infty$ the result is the following: if $\Delta V\leq A$ in $\Omega$ and $\nabla V\cdot n\geq 0$ on $\partial\Omega$, then
$$||\rho||_{\infty}\leq ||\eta||_{\infty}\left(1+\tau\frac{A}{d}\right)^d.$$
\end{proposition}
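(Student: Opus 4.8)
The plan is to apply Proposition \ref{F''} with a well-chosen family of functions $F$ and then let the parameter degenerate so as to pick up the $L^\infty$ bound.

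\textbf{Proof plan.} First I would treat the finite-$p$ cases. Apply \eqref{eqn:stimaKS} with $F(s)=s^p$ (which satisfies the $d$-McCann condition for $p\ge 1$, and for which $F''(s)=p(p-1)s^{p-2}$), and with $u[\rho]=V$. The right-hand side then contains the two terms
$$\tau p(p-1)\int \rho^{p-2}|\nabla\rho|^2\,dx+\tau p(p-1)\int \rho^{p-1}\nabla\rho\cdot\nabla V\,dx.$$
For the first bullet, I would simply bound the cross term from below using Young's inequality $\rho^{p-1}\nabla\rho\cdot\nabla V\ge -\rho^{p-2}|\nabla\rho|^2-\tfrac14\rho^p|\nabla V|^2$, so that the $\int\rho^{p-2}|\nabla\rho|^2$ terms cancel and what remains is $-\tau\frac{p(p-1)}{4}\int\rho^p|\nabla V|^2\,dx\ge -\tau\frac{p(p-1)}{4}\Lip(V)^2\int\rho^p\,dx$, giving the stated inequality. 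For the second bullet, instead of Young I would integrate the cross term by parts: writing $\rho^{p-1}\nabla\rho=\frac1p\nabla(\rho^p)$, one gets $\tau p(p-1)\cdot\frac1p\int\nabla(\rho^p)\cdot\nabla V=\tau(p-1)\big(-\int\rho^p\Delta V+\int_{\partial\Omega}\rho^p\,\nabla V\cdot n\big)\ge -\tau(p-1)A\int\rho^p\,dx$ using $\Delta V\le A$ and $\nabla V\cdot n\ge0$; the first term $\tau p(p-1)\int\rho^{p-2}|\nabla\rho|^2\ge 0$ is discarded. This yields $\int\eta^p\ge(1-\tau(p-1)A)\int\rho^p$.

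\textbf{The $p=\infty$ case.} Here the idea is to rewrite the second-bullet estimate as $\|\rho\|_{L^p}\le(1-\tau(p-1)A)^{-1/p}\|\eta\|_{L^p}$ and pass to the limit $p\to\infty$. The constant $(1-\tau(p-1)A)^{-1/p}\to 1$, which is not sharp enough — it would only give $\|\rho\|_\infty\le\|\eta\|_\infty$, and in fact for $A>0$ we expect growth. The correct move is to apply Proposition \ref{F''} not with $F(s)=s^p$ but with a shifted/scaled power adapted to the expected bound, or equivalently to iterate more carefully. Concretely, I would instead prove the differential-in-$\tau$ version: show that the map $\tau\mapsto\big(1+\tau\frac{A}{d}\big)^{-d}\|\rho_\tau\|_\infty$ (where $\rho_\tau\in Prox^\tau$) is controlled, by testing \eqref{eqn:stimaKS} against $F(s)=(s-M)_+^p$ for a threshold $M$ and letting $p\to\infty$ to obtain a bound on the measure of the superlevel set $\{\rho>M\}$; the factor $(1+\tau A/d)^d$ is exactly what comes out of comparing $\rho e^V$-type quantities, or from the fact that the linearized generator $\Delta+\nabla V\cdot\nabla$ with $\Delta V\le A$ contracts $L^\infty$ up to the factor $e^{\tau A}\approx(1+\tau A/d)^d$ for the time-step semigroup.

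\textbf{Main obstacle.} The routine part is the finite-$p$ manipulation; the delicate point is the $p=\infty$ limit, where one must produce the precise constant $(1+\tau A/d)^d$ rather than a crude $e^{\tau A}$ or $1$. I expect this requires either a clever choice of $F$ in Proposition \ref{F''} (a power of an affine function of $\rho$, exploiting that $d$-McCann is stable under such modifications) so that the algebra closes with the dimensional constant $d$ appearing naturally, or a direct truncation argument: fix $\lambda>0$, set $M=\|\eta\|_\infty(1+\tau A/d)^d$, test with $F=(s-M)_+^p$, bound the cross term by parts, and show the right-hand side forces $|\{\rho>M\}|=0$. Keeping track of where the exponent $d$ enters — presumably through the McCann-type inequality $F(\eta)\ge F(\rho)+\ldots$ combined with the pointwise optimality relation $\nabla\rho/\rho+\nabla V+\nabla\varphi/\tau=0$ and the Monge–Ampère/Jacobian bound $\det(I-\tau D^2 V+\ldots)\le(1+\tau A/d)^d$ — is the real content, and I would organize the proof around making that Jacobian comparison explicit.
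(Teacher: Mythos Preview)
Your treatment of the finite-$p$ cases is correct and matches the paper exactly: apply Proposition~\ref{F''} with $F(s)=s^p$, then for the first bullet use the Young inequality and for the second integrate the cross term by parts and drop the nonnegative gradient term.

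For $p=\infty$, however, you are circling the right ingredient without landing on the actual argument. The paper does \emph{not} obtain the $L^\infty$ bound through Proposition~\ref{F''} at all --- no clever $F$, no truncated power $(s-M)_+^p$, no limit $p\to\infty$. Instead it argues pointwise at the maximum of $\rho$: since $\log\rho+V+\varphi/\tau$ is constant, the maximum of $\rho$ sits at a minimizer $x_0$ of $\varphi+\tau V$. The Monge--Amp\`ere relation gives $\rho(x_0)=\eta(T(x_0))\det(I-D^2\varphi(x_0))$; AM--GM on the eigenvalues yields $\det(I-D^2\varphi)\le(1-\Delta\varphi/d)^d$; and second-order optimality at $x_0$ (with the boundary hypothesis $\nabla V\cdot n\ge 0$ to exclude a boundary maximizer) gives $-\Delta\varphi(x_0)\le\tau\Delta V(x_0)\le\tau A$. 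Combining these three facts produces $\|\rho\|_\infty\le\|\eta\|_\infty(1+\tau A/d)^d$ directly.

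Your final paragraph does mention a Jacobian bound, but you write it as $\det(I-\tau D^2V+\dots)$ and still try to route it through the displacement-convexity inequality $F(\eta)\ge F(\rho)+\dots$. That framing is the gap: the dimensional constant $d$ enters through AM--GM on $\det(I-D^2\varphi)$ in a \emph{pointwise} computation, not through any integrated estimate of the type in Proposition~\ref{F''}. Your proposed routes via $(s-M)_+^p$ or a $p\to\infty$ limit of the second bullet would at best recover $e^{\tau A}$ or require substantial extra work; the paper explicitly notes (Remark~\ref{LpLinfty}) that the limit $p\to\infty$ of the second bullet fails at the discrete level because the coefficient $1-\tau(p-1)A$ becomes negative.
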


\begin{proof}
The estimates on the $L^p$ norm are consequences of the estimate in Proposition \ref{F''}, applied to $F(s)=s^p$. In this case we obtain
$$
 \int_{\Omega} \eta^p \, dx \geq \int_{\Omega} \rho^p \, dx + \tau p(p-1) \left(\int_{\Omega} \rho^{p-2}| \nabla\rho|^2+ \rho^{p-1}  \nabla\rho\cdot\nabla V\,dx)\right). $$
 For the first estimate, we apply a Young inequality which gives
 $$\int \rho^{p-2}\nabla\rho\cdot (\rho\nabla V)\,dx\geq -\int \rho^{p-2}|\nabla\rho|^2-\frac 14\int \rho^{p}|\nabla V|^2\,dx,$$
 which proves the claim.
 
 For the second, we ignore the positive term $\int_{\Omega} \rho^{p-2}| \nabla\rho|^2$ and we rewrite the remaining as
 $$ \tau p(p-1)\int_\Omega \rho^{p-1}  \nabla\rho\cdot\nabla V\,dx=\tau(p-1) \int_\Omega \nabla (\rho^{p} ) \cdot\nabla V\,dx$$
and we integrate by parts, using our assumptions on $V$.

The last statement, about the $L^\infty$ norm, can be proven proven differently, following the same strategy as in Proposition 7.32 in \cite{OTAM}. The adaptations to be performed in the proof are the following: instead of a minimum point of $\varphi$ we take a minimum point for $\varphi+\tau V$ ; we first use $\det(I-D^2\varphi)\leq (1-\Delta \varphi/d)^d$, which is a consequence of the geometric-arithmetic mean inequality, and then $-\Delta \varphi\leq \tau\Delta V\leq A$. The assumption on $\partial V/\partial n$ is needed to handle a possible maximizer on the boundary. This statement can be first proven for $V\in C^2$ and then by approximation, for a less regular $V$, where the assumptions on the Laplacian and on the sign of the normal derivaitve should be interpreted as a condition on the distributional divergence of the vector field $\nabla V$, extended as $0$ outside $\Omega$.
\end{proof}

\begin{remark}\label{LpLinfty}
We observe that, in the continous-time limit, the second estimate gives $\int \rho_t^pdx\leq e^{(p-1)At}\int \rho_0^pdx$. It is then possible to raise to power $1/p$ and take the limit $p\to \infty$ and obtain $||\rho_t||_{\infty}\leq e^{At}||\rho_0||_{\infty}$. Unfortunately, this cannot be done in discrete time since if we first send $p\to\infty$ for fixed $\tau>0$ the coefficient in the r.h.s. in front of $\int \rho^p$ can become negative. This is why we presented a different technique for the $L^\infty$ estimate, but we can notice that, asymptotically as $\tau\to 0$, the the two results coincide.

Also, we observe that the first estimate is not suitable for a limit $p\to\infty$, as the coefficient in the r.h.s. is quadratic in $p$, so that its continuous-in-time version is $\int \rho_t^pdx\leq e^{p(p-1)\Lip(V)^2t/4}\int \rho_0^pdx$.
\end{remark}

\begin{proposition}\label{exponentialLinfty} Let $\eta\in L^\infty$, be a probability measure $\mathcal G:\pical(\Omega)\to\R$ be defined via $\mathcal G(\rho):=\int Vd\rho$ for a given bounded function $V:\Omega\to\R$. Take $\rho \in Prox_{ \Ent + \mathcal G}^{\tau} (\eta)$ and suppose $\Omega$ convex.
Then $\rho $ is bounded, and satisfies
$$||\rho e^V||_{\infty} \leq||\eta e^V||_{\infty}.$$
\end{proposition}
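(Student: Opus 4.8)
The plan is to exploit the optimality condition for $\rho \in Prox_{\Ent+\mathcal G}^\tau(\eta)$ established in Proposition \ref{F''}, which in the Fokker-Planck case reads $\log\rho + V + \varphi/\tau = \text{const}$ on $\{\rho>0\}$. Equivalently, writing $M := \|\eta e^V\|_\infty$, the claim $\|\rho e^V\|_\infty \le M$ amounts to showing that the (relatively) open set $\{\rho e^V > M\}$ is empty. First I would observe that, since $\rho$ is Lipschitz and bounded below by a positive constant (by Proposition \ref{F''} applied with, say, $F(s)=s^2$, or directly from the cited results in Chapter 8 of \cite{OTAM}), the function $\rho e^V$ is bounded and attains its maximum; and from the optimality condition, on $\{\rho>0\}$ — which is all of $\Omega$ — we have $\log\rho + V = \text{const} - \varphi/\tau$, i.e. $\rho e^V = C\,e^{-\varphi/\tau}$ for a positive constant $C$. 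Hence the maximum of $\rho e^V$ is attained exactly where $\varphi$ attains its minimum.

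The key step is then a pointwise comparison at the minimum point of $\varphi$. Let $x_0 \in \Omega$ be a point where $\varphi$ is minimal; if $x_0$ is interior, then $\nabla\varphi(x_0)=0$, so the optimal transport map $T = \id - \nabla\varphi$ satisfies $T(x_0) = x_0$, and moreover $D^2\varphi(x_0) \ge 0$ (as a minimum), so $\det(\Id - D^2\varphi(x_0)) \le 1$. Using the Monge-Ampère equation $\rho(x) = \eta(T(x))\det(\Id - D^2\varphi(x))$ — valid in the a.e./approximate sense standard for optimal maps, and rigorous here because $\rho$ and $\eta$ are regular enough — we get
\[
\rho(x_0) = \eta(T(x_0))\det(\Id - D^2\varphi(x_0)) \le \eta(x_0).
\]
Multiplying by $e^{V(x_0)}$ yields $\rho(x_0)e^{V(x_0)} \le \eta(x_0)e^{V(x_0)} \le \|\eta e^V\|_\infty$, and since $x_0$ is the maximum point of $\rho e^V$ this gives the desired bound. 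Note that here the potential $V$ plays no role in the comparison other than being the same on both sides, which is precisely why no regularity of $V$ is needed — this is the advantage of shifting the maximization from $\varphi$ (as in the $L^\infty$ estimate of Proposition \ref{Lp on truly rho}, where one maximizes $\varphi + \tau V$) to $\varphi$ itself.

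The main obstacle is the case where the minimum of $\varphi$ (equivalently the maximum of $\rho e^V$) is attained only on the boundary $\partial\Omega$; then $\nabla\varphi(x_0)$ need not vanish. I would handle this exactly as in the boundary argument for Proposition \ref{Lp on truly rho} / Proposition 7.32 in \cite{OTAM}: at a boundary minimizer $x_0$, the outward normal derivative satisfies $\partial\varphi/\partial n(x_0) \le 0$, hence $T(x_0) = x_0 - \nabla\varphi(x_0)$ still lies in $\overline\Omega$ (using convexity of $\Omega$), and the tangential Hessian of $\varphi$ at $x_0$ is nonnegative, which together with the structure of the Monge-Ampère inequality again gives $\rho(x_0) \le \esssup \eta$ near $T(x_0)$, hence $\rho(x_0) e^{V(x_0)} \le \|\eta e^V\|_\infty$. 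The second, more technical point is justifying the Monge-Ampère relation and the pointwise evaluation of $\rho$, $\eta$ at the relevant points despite the measures being defined only a.e.; this is again classical and can be routed through the same regularity of $\rho$ (Lipschitz, bounded below) and a limiting/density argument, first proving the statement for $V \in C^2$ and then for bounded $V$ by approximation, interpreting the needed conditions distributionally as indicated at the end of the proof of Proposition \ref{Lp on truly rho}.
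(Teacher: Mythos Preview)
Your approach---looking at the maximum point of $\rho e^V$, which by the optimality condition is the minimum of $\varphi$, and using the Monge--Amp\`ere relation there---is essentially the same as the paper's, which simply refers to Lemma~2.4 of \cite{IacPatSan} for this very argument after noting that $\Ent(\rho)+\int V\,d\rho=\int F(\rho/m)\,dm$ with $F(s)=s\log s$ and $m=e^{-V}$. One small point: your invocation of Proposition~\ref{F''} for the Lipschitz regularity of $\rho$ presupposes $V$ Lipschitz, whereas here $V$ is only assumed bounded; but you do not actually need that, since the continuity (and boundedness) of $\rho e^V=Ce^{-\varphi/\tau}$ follows directly from the optimality condition and the Lipschitz regularity of the Kantorovich potential.
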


\begin{proof}
The proof, consisting in looking at the maximum point of $\rho e^V$, i.e. of $\log\rho+V$, is exactly the same as in Lemma 2.4 of \cite{IacPatSan}. Indeed, we can write 
$$\Ent(\rho)+\int Vd\rho=\int (\rho\log\rho+\rho V)dx=\int F\left(\frac{\rho}{m}\right)d m,$$
for $F(s)=s\log s$ and $m=e^{-V}$.
\end{proof}

\begin{remark}
Actually, as it is proven in \cite{IacPatSan}, the same bound also holds from below. Moreover, morally this upper bound weighted by $e^V$ should also work for the $L^p$ estimates, but it would require the geodesic convexity of the $L^p$ norm w.r.t. $e^{-V}$, which requires $V$ to be convex (and if $V$ is only $\lambda$-convex, it does not work). It is interesting to see that the assumption is the opposite of the one in Proposition \ref{Lp on truly rho}, where we needed upper bounds on $D^2V$. 
\end{remark}

The case where $u[\rho]$ depends on $\rho$, but in a very good way, is easy to handle. Take an even function $W:\R^d\to\R$ and consider
$$u[\rho]=W*\rho,\quad\mbox{i.e.}\quad\mathcal G(\rho)=\frac 12\int W(x-y)\,d\rho(x)\,d\rho(y).$$

The first of the two estimates in Proposition \ref{Lp on truly rho} is easy to adapt, while unfortunately the other one, based on second-order assumptions but also on the boundary behavior, cannot be easily translated in terms of $W$. The same problem occurrs for the $L^\infty$ estimate of Proposition \ref{Lp on truly rho}. We can therefore state the following:
\begin{proposition} Let $\eta\in L^p$, with $p<\infty$, be a probability measure $\mathcal G:\pical(\Omega)\to\R$ be defined via $\mathcal G(\rho):=\frac 12\int W(x-y)\,d\rho(x)\,d\rho(y)$ for a given even function $W:\R^d\to\R$. Take $\rho \in Prox_{ \Ent + \mathcal G}^{\tau} (\eta)$ and suppose $\Omega$ convex and $W$ Lipschitz continuous.
Then $\rho $ is Lipschitz continuous and bounded from below by a positive constant, and
$$\int \eta^p \,dx\geq  \left(1-\tau\frac{p(p-1)}{4}\Lip(W)^2\right)\int \rho^p\,dx.$$
\end{proposition}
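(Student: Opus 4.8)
The plan is to mirror the proof of the first estimate in Proposition \ref{Lp on truly rho}, replacing the fixed potential $V$ by the $\rho$-dependent potential $u[\rho]=W*\rho$. The starting point is Proposition \ref{F''} applied to $F(s)=s^p$: since $W$ is Lipschitz, the convolution $u[\rho]=W*\rho$ is Lipschitz (with $\Lip(u[\rho])\le\Lip(W)$, because $\rho$ is a probability measure, so $\nabla(W*\rho)=(\nabla W)*\rho$ and $\|(\nabla W)*\rho\|_\infty\le\|\nabla W\|_\infty$), hence the hypothesis of Proposition \ref{F''} is met, $\rho$ is Lipschitz and bounded below, and we obtain
\begin{equation*}
\int_{\Omega} \eta^p \, dx \geq \int_{\Omega} \rho^p \, dx + \tau p(p-1) \int_{\Omega} \Big(\rho^{p-2}|\nabla\rho|^2+ \rho^{p-1}\nabla\rho\cdot\nabla u[\rho]\Big)\,dx.
\end{equation*}

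Next I would bound the cross term from below by Young's inequality exactly as in the Fokker-Planck case: writing $\rho^{p-1}\nabla\rho\cdot\nabla u[\rho]=\rho^{p-2}\nabla\rho\cdot(\rho\,\nabla u[\rho])$ and using $a\cdot b\ge -|a|^2-\tfrac14|b|^2$ pointwise with $a=\rho^{(p-2)/2}\nabla\rho$ and $b=\rho^{p/2}\nabla u[\rho]$, we get
\begin{equation*}
\int_{\Omega}\rho^{p-2}\nabla\rho\cdot(\rho\,\nabla u[\rho])\,dx \ge -\int_{\Omega}\rho^{p-2}|\nabla\rho|^2\,dx-\frac14\int_{\Omega}\rho^{p}|\nabla u[\rho]|^2\,dx.
\end{equation*}
The first term on the right cancels the good term $\int\rho^{p-2}|\nabla\rho|^2$ in the previous display (up to the factor $\tau p(p-1)$), so we are left with
\begin{equation*}
\int_{\Omega}\eta^p\,dx\ge \int_{\Omega}\rho^p\,dx-\tau\frac{p(p-1)}{4}\int_{\Omega}\rho^p|\nabla u[\rho]|^2\,dx.
\end{equation*}
Then I would invoke the pointwise bound $|\nabla u[\rho]|=|(\nabla W)*\rho|\le\Lip(W)$ to replace $|\nabla u[\rho]|^2$ by $\Lip(W)^2$ and factor it out, yielding the claimed inequality.

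The main step to get right — and the only place where this differs substantively from Proposition \ref{Lp on truly rho} — is the observation that $W$ Lipschitz forces $u[\rho]=W*\rho$ to be Lipschitz with the same constant, uniformly in $\rho$; this is what makes both the hypothesis of Proposition \ref{F''} applicable and the final $L^\infty$ bound on $\nabla u[\rho]$ valid. One minor care point: $W$ need not be differentiable everywhere, so $\nabla W$ should be interpreted as the (bounded) a.e.\ gradient of the Lipschitz function $W$, and $(\nabla W)*\rho$ is still well-defined and bounded by $\Lip(W)$; alternatively one argues by the elementary estimate $|u[\rho](x)-u[\rho](y)|=|\int (W(x-z)-W(y-z))\,d\rho(z)|\le\Lip(W)|x-y|$ directly, without ever differentiating $W$. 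Everything else is a verbatim repetition of the Fokker-Planck computation, so no genuine obstacle is expected; the reason the second and $L^\infty$ estimates of Proposition \ref{Lp on truly rho} do not transfer is precisely that they rely on $\Delta V\le A$ and the sign of $\partial V/\partial n$, which have no clean counterpart in terms of $W$ once the boundary of $\Omega$ enters.
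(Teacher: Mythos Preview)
Your proposal is correct and follows exactly the paper's own proof: the paper simply observes that the argument of Proposition \ref{Lp on truly rho} depends only on the Lipschitz constant of $u[\rho]$, and that $\Lip(W*\rho)\le\Lip(W)$. You have spelled out the details (the Young inequality and the bound on $\nabla u[\rho]$) more explicitly than the paper does, but the route is identical.
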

\begin{proof}
The proof is identical to that in Proposition \ref{Lp on truly rho}, which does not depend on the fact that $u[\rho]$ depends or not upon $\rho$, but only on its Lipschitz bounds. Hence, we just have to observe that we have $\Lip(W*\rho)\leq \Lip(W)$.
\end{proof}

On the other hand, it is easier to extend the estimate in Proposition \ref{exponentialLinfty}, but this requires an adaptation if one wants to iterate it.
\begin{proposition} Let $\eta\in L^\infty$ be a probability measure and $\mathcal G:\pical(\Omega)\to\R$ be defined via $\mathcal G(\rho):=\frac 12\int W(x-y)\,d\rho(x)\,d\rho(y)$ for a given even function $W:\R^d\to\R$. Take $\rho \in Prox_{ \Ent + \mathcal G}^{\tau} (\eta)$ and suppose $\Omega$ convex and $W$ Lipschitz continuous.
Then $\rho $ is Lipschitz continuous and bounded from below by a positive constant, and
$$||\rho e^{u[\rho]}||_{\infty} \leq||\eta e^{u[\rho]}||_{\infty}.$$
This implies in particular the (more useful estimate)
$$||\rho e^{u[\rho]}||_{\infty}e^{\Ent(\rho) + \mathcal G(\rho)}\leq ||\eta e^{u[\eta]}||_{\infty}e^{\Ent(\eta) + \mathcal G(\eta)}e^{\tau\Lip(W)^2/2}.$$
\end{proposition}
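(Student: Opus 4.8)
\noindent\emph{Sketch of proof.} The plan is to prove first the pointwise bound $\|\rho e^{u[\rho]}\|_{\infty}\le\|\eta e^{u[\rho]}\|_{\infty}$ exactly as in Proposition~\ref{exponentialLinfty}, freezing the potential $V:=u[\rho]=W*\rho$, and then to convert the right-hand side into the desired iterable quantity. As a preliminary step I would record the regularity of $\rho$: since $u[\rho]=W*\rho$ is Lipschitz with $\Lip(u[\rho])\le\Lip(W)$, the regularity part of Proposition~\ref{F''} gives that $\rho$ is Lipschitz continuous and bounded below by a positive constant; in particular $\rho$ is absolutely continuous, the Kantorovich potential $\varphi$ and the optimal map $T=\id-\nabla\varphi$ from $\rho$ to $\eta$ are well defined, and the optimality condition for $\rho\in Prox_{\Ent+\mathcal G}^{\tau}(\eta)$ reads $\log\rho+u[\rho]+\varphi/\tau=\mathrm{const}$ on $\Omega$.

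The key observation is that this first-order condition has exactly the same form as the one used in the proof of Proposition~\ref{exponentialLinfty} with the \emph{fixed} potential $V=u[\rho]$, and that proof (borrowed from Lemma~2.4 of \cite{IacPatSan}) only uses this condition together with the structure $T=\id-\nabla\varphi$ of the optimal map. Concretely: pick a point $x_0$ where $\rho e^{u[\rho]}=e^{\log\rho+V}$ attains its maximum; by the optimality condition $x_0$ is a minimum point of $\varphi$, so at an interior point $\nabla\varphi(x_0)=0$ and $D^2\varphi(x_0)\ge0$, and since $|x|^2/2-\varphi$ is convex the eigenvalues of $D^2\varphi(x_0)$ lie in $[0,1]$; the Monge--Amp\`ere relation $\rho=(\eta\circ T)\det(I-D^2\varphi)$ then gives $\rho(x_0)=\eta(x_0)\det(I-D^2\varphi(x_0))\le\eta(x_0)$, whence $\|\rho e^{u[\rho]}\|_{\infty}=\rho(x_0)e^{V(x_0)}\le\eta(x_0)e^{V(x_0)}\le\|\eta e^{u[\rho]}\|_{\infty}$. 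The case of a maximizer on $\partial\Omega$ is handled exactly as in the cited references, and is the only delicate point of this step.

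To obtain the ``more useful'' estimate I would pass from $u[\rho]$ to $u[\eta]$ on the right-hand side. Writing $e^{u[\rho]}=e^{u[\eta]}e^{u[\rho]-u[\eta]}$ and using $\eta\ge0$ we get $\|\eta e^{u[\rho]}\|_{\infty}\le\|\eta e^{u[\eta]}\|_{\infty}\,e^{\|u[\rho]-u[\eta]\|_{\infty}}$, and since $u[\rho]-u[\eta]=W*(\rho-\eta)$ while $y\mapsto W(x-y)$ is $\Lip(W)$-Lipschitz, Kantorovich--Rubinstein duality yields $\|u[\rho]-u[\eta]\|_{\infty}\le\Lip(W)\,W_1(\rho,\eta)\le\Lip(W)\,W_2(\rho,\eta)$. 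Taking logarithms, the first step gives $\log\|\rho e^{u[\rho]}\|_{\infty}\le\log\|\eta e^{u[\eta]}\|_{\infty}+\Lip(W)\,W_2(\rho,\eta)$.

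Finally I would absorb the transport term. Testing the minimality of $\rho$ against the competitor $\eta$ gives $\Ent(\rho)+\mathcal G(\rho)+\tfrac{1}{2\tau}W_2^2(\rho,\eta)\le\Ent(\eta)+\mathcal G(\eta)$, while Young's inequality gives $\Lip(W)W_2(\rho,\eta)\le\tfrac{\tau}{2}\Lip(W)^2+\tfrac{1}{2\tau}W_2^2(\rho,\eta)$; combining the two, $\Lip(W)W_2(\rho,\eta)\le\tfrac{\tau}{2}\Lip(W)^2+\Ent(\eta)+\mathcal G(\eta)-\Ent(\rho)-\mathcal G(\rho)$. Substituting into the previous display and rearranging yields $\log\|\rho e^{u[\rho]}\|_{\infty}+\Ent(\rho)+\mathcal G(\rho)\le\log\|\eta e^{u[\eta]}\|_{\infty}+\Ent(\eta)+\mathcal G(\eta)+\tfrac{\tau}{2}\Lip(W)^2$, and exponentiating gives the claim. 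The main obstacle is really the first step --- making sure the frozen-potential maximum-principle argument, boundary case included, goes through --- since everything afterwards is elementary manipulation of the two inequalities above.
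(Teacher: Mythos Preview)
Your proposal is correct and follows essentially the same approach as the paper: freeze the potential $V=u[\rho]$ and invoke the maximum-principle argument of Proposition~\ref{exponentialLinfty} (Lemma~2.4 in \cite{IacPatSan}) to get $\|\rho e^{u[\rho]}\|_\infty\le\|\eta e^{u[\rho]}\|_\infty$, then pass from $u[\rho]$ to $u[\eta]$ via $\|W*(\rho-\eta)\|_\infty\le\Lip(W)W_1(\rho,\eta)\le\Lip(W)W_2(\rho,\eta)$, and finally absorb the Wasserstein term using Young's inequality together with $\tfrac{1}{2\tau}W_2^2(\rho,\eta)\le\mathcal F(\eta)-\mathcal F(\rho)$. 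The only difference is that you spell out the Monge--Amp\`ere maximum-point computation explicitly, whereas the paper simply cites \cite{IacPatSan}.
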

\begin{proof}
The estimate $||\rho e^{u[\rho]}||_{\infty} \leq||\eta e^{u[\rho]}||_{\infty}$ can be trivially obtained in the same way as in the case where $u[\rho]=V$ does not depend on $\rho$. Then we observe that we have
$$u[\rho]\leq u[\eta]+||W*(\rho-\eta)||_{\infty}\leq u[\eta]+\Lip(W)W_1(\rho,\eta).$$
We then use
$$\Lip(W)W_1(\rho,\eta)\leq \Lip(W)W_2(\rho,\eta)\leq \frac \tau 2 \Lip(W)^2+\frac{W_2^2(\rho,\eta)}{2\tau}\leq \frac\tau 2 \Lip(W)^2+\mathcal F(\eta)-\mathcal F(\rho), $$
where $\mathcal F=\Ent+\mathcal G$. This provides the claimed result.
\end{proof}

We can now deduce, frome the various estimates of this section, the following bounds, whose proofs are just a combination of the arguments above.
\begin{proposition}\label{summary warmup}
Suppose $\rho_0\in L^p$ and consider the JKO scheme for the functional $\mathcal F=\Ent+\mathcal G$ with $\mathcal G(\rho)=\int Vd\rho$. Then we have
\begin{itemize}
\item if $p<+\infty$ and $V$ is Lipschitz continuous, then the norm $||\rho_t||_p$ grows at most exponentially in time;
\item if $p\in [1,+\infty]$ is arbitrary and $V$ is such that $\Delta V$ is bounded from above and $\nabla V\cdot n\geq 0$ on $\partial\Omega$, then  $||\rho_t||_p$ grows at most exponentially in time.
\item if $p=\infty$ and $V$ is bounded, then the norm  $||\rho_te^V||_\infty$ is non-increasing in time. 
\end{itemize}
In the case of the interaction functional $\mathcal G(\rho)=\frac 12\int W(x-y)\,d\rho(x)\,d\rho(y)$, we have
\begin{itemize}
\item if $ p<+\infty$ and $W$ is Lipschitz continuous, then the norm $||\rho_t||_p$ grows at most exponentially in time;
\item  if $p=\infty$ and $W$ is Lipschitz continuous,  then the quantity
$$||\rho_t e^{W*\rho_t}||_{\infty}e^{ \mathcal F(\rho_t)}$$
 is non-increasing in time, and in particular the norm $||\rho ||_{\infty}$ is uniformly bounded.
\end{itemize}
\end{proposition}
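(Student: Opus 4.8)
The plan is to obtain each item by iterating over $n$ the one-step estimate already proved above for the relevant quantity, and then to convert the resulting product $\prod_{k=0}^{n-1}(1\pm C\tau)^{\pm1}$ into an exponential in $t=n\tau$ with a constant that stays bounded as $\tau\to0$. Since the $L^p$ norms ($p<\infty$), the $L^\infty$ norm and the weighted sup-norms appearing in the statement are all lower semicontinuous along the convergence $\rho^\tau\to\rho$, it suffices to prove the bounds for the discrete iterates $\rho^\tau_n$ with constants independent of $\tau$ and then pass to the limit; equivalently one may read the claims on the piecewise-constant interpolations, where a bound holding at every node $t=n\tau$ extends to all $t$.

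\emph{Case $p<\infty$ (potential and interaction).} Applying Proposition~\ref{Lp on truly rho} with $F(s)=s^p$, $\eta=\rho^\tau_n$, $\rho=\rho^\tau_{n+1}$ gives, as long as $\tau$ is small enough that the constant below is positive,
\[
\int(\rho^\tau_{n+1})^p\,dx\;\le\;\frac1{1-C\tau}\int(\rho^\tau_n)^p\,dx,
\]
with $C=\tfrac{p(p-1)}4\Lip(V)^2$ in the Lipschitz case, and $C=(p-1)A$ (if $A>0$) when $\Delta V\le A$ and $\nabla V\cdot n\ge0$ — and the $L^p$ norm is simply non-increasing when $A\le0$. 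The interaction case is word-for-word the same, since the proof of Proposition~\ref{Lp on truly rho} only uses the Lipschitz bound on $u[\rho]$ together with $\Lip(W*\rho)\le\Lip(W)$. Iterating from $\rho_0$ and using $-\log(1-x)\le2x$ for $0\le x\le\tfrac12$ yields $\int(\rho^\tau_n)^p\le(1-C\tau)^{-n}\int\rho_0^p\le e^{2C\,n\tau}\int\rho_0^p=e^{2Ct}\int\rho_0^p$; taking $p$-th roots gives the announced at-most-exponential growth of $\|\rho_t\|_p$. The case $p=1$ is trivial as $\|\rho_t\|_1\equiv1$.

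\emph{The $L^\infty$ cases.} For the second item with $p=\infty$, the final display of Proposition~\ref{Lp on truly rho} gives $\|\rho^\tau_{n+1}\|_\infty\le(1+\tau A/d)^d\|\rho^\tau_n\|_\infty$, whence $\|\rho^\tau_n\|_\infty\le(1+\tau A/d)^{dn}\|\rho_0\|_\infty\le e^{At}\|\rho_0\|_\infty$ by $\log(1+x)\le x$ (and non-increasing if $A\le0$). For the third item, Proposition~\ref{exponentialLinfty} gives outright $\|\rho^\tau_{n+1}e^V\|_\infty\le\|\rho^\tau_ne^V\|_\infty$, which telescopes to $\|\rho^\tau_ne^V\|_\infty\le\|\rho_0e^V\|_\infty$ for all $n$, i.e. $t\mapsto\|\rho_te^V\|_\infty$ is non-increasing on the whole interpolated curve.

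\emph{The interaction $L^\infty$ case, and where the care is needed.} Since the weight $e^{W*\rho}$ varies from step to step, one cannot iterate a bound on $\|\rho^\tau_n\|_\infty$ alone; the correct object to propagate is the composite quantity $Q_n:=\|\rho^\tau_ne^{W*\rho^\tau_n}\|_\infty\,e^{\mathcal F(\rho^\tau_n)}$, for which the preceding Proposition gives $Q_{n+1}\le Q_ne^{\tau\Lip(W)^2/2}$ and hence $Q_n\le Q_0e^{n\tau\Lip(W)^2/2}=Q_0e^{t\Lip(W)^2/2}$. As $W$ is bounded on $\Omega-\Omega$ we have $\|W*\rho\|_\infty\le\|W\|_\infty$, and as $\Omega$ is bounded $\mathcal F=\Ent+\mathcal G$ is bounded from below on probability measures; therefore $\|\rho^\tau_n\|_\infty\le e^{\|W\|_\infty}e^{-\inf\mathcal F}Q_n$ is bounded uniformly in $\tau$ on each bounded time interval, which is the last assertion. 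The only points deserving attention are: (i) that the geometric products $(1-C\tau)^{-t/\tau}$ stay bounded as $\tau\to0$, which is the elementary logarithm estimate used above; and (ii) that in the interaction $L^\infty$ case the increase of the weight, controlled by $\Lip(W)\,W_1(\rho^\tau_{n+1},\rho^\tau_n)$, has to be absorbed through the energy dissipation $\tfrac1{2\tau}W_2^2(\rho^\tau_{n+1},\rho^\tau_n)\le\mathcal F(\rho^\tau_n)-\mathcal F(\rho^\tau_{n+1})$ of one JKO step — which is exactly what the proof of the preceding Proposition does. No genuinely new difficulty is involved.
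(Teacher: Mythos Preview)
Your argument is correct and is precisely the paper's approach: the paper's own proof is the single sentence ``the proofs are just a combination of the arguments above,'' and you have spelled that combination out. Note that in the last item your iteration gives $Q_{n+1}\le Q_n\,e^{\tau\Lip(W)^2/2}$ and hence at-most-exponential growth of $Q_n$ (so $\|\rho_t\|_\infty$ is bounded on bounded time intervals), which is indeed all that the one-step estimate in the preceding proposition yields---the paper's wording ``non-increasing'' is a slight overstatement of what its own lemma delivers.
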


\section{$L^p$ and $L^\infty$ estimates for the Keller-Segel case}

\begin{proposition} The following functions are convex and satisfy the $d$-McCann condition:
\begin{itemize}
\item $F_{p,K}(s)= (s^p - K s^{\frac {d-1}d} )_+$ for every $p\geq 1$ and $K>0$.
\item $\tilde F_{p,K}(s) = (s-K)_+^p$ for $p \geq \frac {4d}{3d+1}$ and every $K>0$;
\end{itemize}
\end{proposition}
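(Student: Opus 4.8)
The plan is to handle the two families separately, each time reducing the $d$-McCann condition --- that $g(s):=s^dF(s^{-d})$ be convex and non-increasing on $(0,\infty)$ --- to an elementary one-variable check; the convexity of $F$ itself will come essentially for free in both cases.

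For $F_{p,K}(s)=(s^p-Ks^{(d-1)/d})_+$ almost no computation is needed. Since $p\ge1$, the map $s\mapsto s^p$ is convex on $(0,\infty)$, and since $(d-1)/d\in[0,1)$, the map $s\mapsto -s^{(d-1)/d}$ is convex too (a concave power becomes convex after the sign change; for $d=1$ it is simply a constant). Hence $s^p-Ks^{(d-1)/d}$ is convex on $[0,\infty)$ (being convex on $(0,\infty)$ and continuous at $0$ with value $0$) and is negative near $0$, so $F_{p,K}=\max(s^p-Ks^{(d-1)/d},0)$ is a maximum of two convex functions --- hence convex --- vanishes near $0$, and is non-decreasing. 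For the McCann condition I would simply compute
$$g(s)=s^dF_{p,K}(s^{-d})=\bigl(s^{d(1-p)}-Ks\bigr)_+,$$
and note that $s\mapsto s^{d(1-p)}$ is convex and non-increasing (its exponent is $\le0$) while $-Ks$ is affine and decreasing, so their sum is convex and non-increasing, and once more taking $\max(\cdot,0)$ preserves both properties.

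For $\tilde F_{p,K}(s)=(s-K)_+^p$ the preliminary facts are still cheap: since $\tfrac{4d}{3d+1}\ge1$ for every $d\ge1$ we have $p\ge1$, so $(s-K)_+^p$, a convex increasing power of the convex function $(s-K)_+$, is convex, increasing and zero at $s=0$. The real work is the McCann condition. I would compute
$$g(s)=s^d\tilde F_{p,K}(s^{-d})=s^{d(1-p)}\bigl(1-Ks^d\bigr)_+^p,$$
which is non-increasing as a product of nonnegative non-increasing factors; and, since convexity and monotonicity survive the rescaling $s\mapsto cs$ and multiplication by a positive constant, it suffices to prove that $\tilde g(t):=t^{d(1-p)}(1-t^d)^p$ is convex on $(0,1)$ --- then extended by the constant $0$ on $[1,\infty)$, a $C^1$ gluing when $p>1$ since $\tilde g,\tilde g'\to0$ as $t\to1^-$, while the leftover case $p=1$ occurs only for $d=1$, where $g(s)=(1-Ks)_+$ is obviously convex. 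Writing $\tilde g=e^{\phi}$ with $\phi(t)=d(1-p)\log t+p\log(1-t^d)$, convexity of $\tilde g$ is equivalent to $\phi''+(\phi')^2\ge0$; multiplying by $t^2(1-t^d)^2>0$ and substituting $w=t^d\in(0,1)$ should reduce this to the nonnegativity on $[0,1]$ of the quadratic
$$\Psi(w)=d(d-1)\,w^2+d(d-1)(p-2)\,w+d(p-1)\bigl(1+d(p-1)\bigr).$$
Its leading coefficient is $\ge0$ and $\Psi(0)=d(p-1)(1+d(p-1))\ge0$ because $p\ge1$; if $p\ge2$ the vertex $w^\ast=\tfrac{2-p}{2}$ is $\le0$, so $\Psi\ge\Psi(0)\ge0$ on $[0,1]$, while for $1\le p<2$ the vertex lies in $(0,1)$ and the minimum of $\Psi$ on $[0,1]$ equals $\Psi(w^\ast)$, whose nonnegativity is equivalent to $4(p-1)(1+d(p-1))\ge(d-1)(p-2)^2$, i.e., after expansion, to $p\bigl((3d+1)p-4d\bigr)\ge0$ --- that is, exactly to $p\ge\tfrac{4d}{3d+1}$. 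For $d=1$ the quadratic degenerates to $\Psi(w)=p(p-1)\ge0$, again equivalent to $p\ge1=\tfrac{4}{4}$.

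The only genuine obstacle is this last chain of computations for $\tilde F_{p,K}$: carrying out $\phi''+(\phi')^2$ without error, clearing the denominators, and simplifying all the way down to $\Psi(w)$ and then to the transparent inequality $p((3d+1)p-4d)\ge0$ --- the precise borderline appearing in the statement. Everything else reduces to manipulations with maxima of convex functions.
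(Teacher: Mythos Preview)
Your argument is correct and follows essentially the same route as the paper. For $F_{p,K}$ both you and the paper simply write out $s^{d}F_{p,K}(s^{-d})=(s^{-d(p-1)}-Ks)_{+}$ and observe it is manifestly convex and non-increasing; for $\tilde F_{p,K}$ both reduce the convexity of $s\mapsto s^{d}\tilde F_{p,K}(s^{-d})$ to the nonnegativity on $[0,1]$ of a quadratic in $Ks^{d}$ (your $w$), the paper reaching that quadratic by two direct differentiations while you use the logarithmic substitution $\tilde g=e^{\phi}$ and compute $\phi''+(\phi')^{2}$, but the resulting quadratics coincide up to the harmless factor $d$ and both yield the sharp threshold $p\ge 4d/(3d+1)$.
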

We will mainly use the functions $F_{p,K}(s)$, so as to avoid restrictions on $p$, but it is quite apparent that computations are easier with $\tilde F_{p,K}(s)$, which is the reason for presenting both.

\begin{proof} It is clear that both $F_p$ and $\tilde F_p$ are convex and increasing. In the case of $F_p$ it is sufficient to compute 
$$F_p \left( \frac 1{s^d} \right) s^d = \left( \frac 1{s^{d(p-1)}} - Ks \right)_+,$$
which is clearly convex. For the functions $\tilde F_{p,K}(s)$,  we need to compute the derivatives:

$$ \frac {d}{ds} \Bigl( \tilde F_p ( s^{-d} ) s^d \Bigr) = d s^{d-1} \left( \frac 1 {s^d} \!-\! K \right)_+^p - \frac{pd}s \left( \frac 1{s^d} \!-\! K \right)_+^{p-1} \!\!=- d\left( \frac {p-1}s + Ks^{d-1}\! \right) \cdot  \left( \frac 1{s^d} - K \right)_+^{p-1}$$
\begin{align*} \frac{\frac {d^2}{ds^2} \Bigl( \tilde F_p ( s^{-d} ) s^d \Bigr)}{ d(s^{-d} - K)_+^{p-2}} & =  \left( \frac{ p-1}{s^2} - K(d-1)s^{d-1}\right) \cdot  \left( \frac 1 {s^d} \!-\! K \right) + \left( \frac {p-1}s + Ks^{d-1} \right) \cdot \frac{(p-1)d}{s^{d+1}} \\
& = K^2(d-1) s^{d-2} + \frac{ ((p-1)d +1) (p-1)}{s^{d+2}} - \frac{ K (d-1)(2-p)}{s^2}. \\
%& = \frac {d-1}{s^{d-2}} \left( K - \sqrt{ \frac{ ((p-1)d +1) (p-1)}{d-1} } s^p \right) ^2 + 2K(\sqrt{ (d-1)((p-1)d+1)(p-1) } - (d-1)(2-p)) s^{-2}
\end{align*}
This expression is $s^{-d-2}$ times a quadratic polynomial in $s^d$ and with easy calculation we see that it is nonnegative in $[0,K^{-1}]$ if and only if $p \geq \frac {4d}{3d+1}$.
\end{proof}
For the study of the Keller-Segel case we will use the following functional $\mathcal G$:

$$\mathcal G(\rho):=-\frac{\chi}{2} \int |\nabla h[\rho]|^2dx=-\frac{\chi}{2} \int h[\rho] d\rho,$$
where $\chi>0$ is a given constant and $h[\rho]$ is the only solution of 
$$\begin{cases}-\Delta h=\rho &\mbox{in } \Omega,\\
			h=0 &\mbox{on } \partial\Omega.\end{cases}$$
Note the negative sign before the integral in the definition of $\mathcal G$.
As we already did in Section 2, it is not difficult to check that we have
$$\frac{\delta\mathcal G}{\delta \rho}=-\chi h[\rho].$$

\begin{proposition}\label{prop:smoothing} Let $\eta$ be a probability measure and $F$ be a convex (but not necessarily smooth) function satisfying the McCann condition and let $\mathcal H(\rho)= \frac 1q\int_{\Omega} \rho^q \, dx$ with $q>d/2$. Then, let $\delta>0$ and consider $\rho \in Prox_{ \Ent + \delta \mathcal H+ \mathcal{G}}^{\tau} (\eta)$:  if $\Omega$ is convex we have
\begin{equation}\label{eqn:stimaKS} \int_{\Omega} F(\eta) \, dx \geq \int_{\Omega} F(\rho) \, dx - \tau\chi \int_{\Omega} [\rho F'_+(\rho) - F(\rho)] \rho \, dx + \tau  \int_{\Omega} |\nabla \rho|^2 F_{ac}''(\rho) \, dx, \end{equation}
Where $F''_{ac}$ is the absolutely continuous part of the derivative of $F'$ and $F'_+$ is the right derivative of $F$.
\end{proposition}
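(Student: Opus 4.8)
The plan is to follow the same pattern as the proof of Proposition \ref{F''}: combine the displacement-convexity inequality of Lemma \ref{estfromdisplconv} with the Euler--Lagrange equation satisfied by the minimizer $\rho$. The differences with the Fokker--Planck case are that here the first variation of $\mathcal G$ is $u[\rho]=-\chi h[\rho]$, which is not semiconcave and must be handled through the elliptic equation $-\Delta h[\rho]=\rho$, and that the extra term $\delta\mathcal H$ contributes one additional, sign-definite term (so the final estimate will be uniform in $\delta$ and will require no smallness of $\chi$). Writing the optimality condition for $\rho\in Prox_{\Ent+\delta\mathcal H+\mathcal G}^{\tau}(\eta)$ gives, on $\{\rho>0\}$,
$$\log\rho+\delta\rho^{q-1}-\chi h[\rho]+\frac{\varphi}{\tau}=\mathrm{const},$$
$\varphi$ being the Kantorovich potential from $\rho$ to $\eta$; since $\rho>0$ a.e., this may be differentiated into $\nabla\varphi=-\tau\bigl(\tfrac{\nabla\rho}{\rho}+\delta(q-1)\rho^{q-2}\nabla\rho-\chi\nabla h[\rho]\bigr)$.

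Before using this I would cash in the regularity that the term $\delta\mathcal H$ with $q>d/2$ provides, which is precisely what makes Lemma \ref{estfromdisplconv} and the integration by parts below legitimate. A minimizer has finite energy, in particular $\rho\in L^q$; since $q>d/2$ this gives $h[\rho]\in W^{2,q}\hookrightarrow L^\infty$, and as $\varphi$ is Lipschitz the right-hand side of the Euler--Lagrange equation is bounded. Because $s\mapsto\log s+\delta s^{q-1}$ is an increasing bijection of $(0,\infty)$ onto $\R$, $\rho$ is then bounded above and below by positive constants. Bootstrapping, $\rho\in L^\infty$ yields $h[\rho]\in W^{2,p}$ for every $p<\infty$, hence $h[\rho]\in C^{1,\alpha}$, and back in the Euler--Lagrange equation $\rho$ turns out to be Lipschitz. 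So $\rho$ is a Lipschitz density valued in a fixed compact subinterval of $(0,\infty)$, and Lemma \ref{estfromdisplconv} applies.

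Assume first $F\in C^2$. Substituting the expression of $\nabla\varphi$ into the conclusion $\int F(\eta)\ge\int F(\rho)-\int\rho\,\nabla(F'(\rho))\cdot\nabla\varphi\,dx$ of Lemma \ref{estfromdisplconv} and using $\nabla(F'(\rho))=F''(\rho)\nabla\rho$, the last term becomes
$$\tau\int F''(\rho)|\nabla\rho|^2\,dx+\tau\delta(q-1)\int\rho^{q-1}F''(\rho)|\nabla\rho|^2\,dx-\tau\chi\int\rho F''(\rho)\nabla\rho\cdot\nabla h[\rho]\,dx.$$
The middle integral is nonnegative and is simply discarded. The main point is the last integral: introducing $G(s):=sF'(s)-F(s)$, so that $G'(s)=sF''(s)$ and $\nabla(G(\rho))=\rho F''(\rho)\nabla\rho$, Green's formula together with $-\Delta h[\rho]=\rho$ gives
$$\int_\Omega\rho F''(\rho)\nabla\rho\cdot\nabla h[\rho]\,dx=\int_\Omega G(\rho)\,\rho\,dx+\int_{\partial\Omega}G(\rho)\,\bigl(\nabla h[\rho]\cdot n\bigr)\,d\haus^{d-1}.$$
Here $G(\rho)=\rho F'(\rho)-F(\rho)\ge0$ (convexity of $F$ and $F(0)=0$), while $h[\rho]\ge0$ in $\Omega$ by the maximum principle and $h[\rho]=0$ on $\partial\Omega$ force $\nabla h[\rho]\cdot n\le0$ there; hence the boundary term is $\le0$ and $\int_\Omega\rho F''(\rho)\nabla\rho\cdot\nabla h[\rho]\,dx\le\int_\Omega(\rho F'(\rho)-F(\rho))\rho\,dx$. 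Collecting the three contributions yields the desired inequality when $F\in C^2$.

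Finally, to remove the smoothness assumption I would approximate $F$ by $C^2$ convex functions $F_\varepsilon$ satisfying the $d$-McCann condition, chosen so that $F_\varepsilon\to F$ uniformly and $F_\varepsilon'\to F'_+$ on the fixed compact range of $\rho$; since $\rho$ is bounded away from $0$ and $\infty$ all the terms involving $F$ or $F'$ pass to the limit by dominated convergence, whereas for $\int|\nabla\rho|^2F_\varepsilon''(\rho)\,dx$ a Fatou-type argument retains the absolutely continuous part $F''_{ac}$ of $F''$ and drops the nonnegative singular part. I expect the genuine obstacle to be the treatment of the coupling term $\int\rho F''(\rho)\nabla\rho\cdot\nabla h[\rho]\,dx$ — namely justifying Green's formula against the elliptic equation for $h[\rho]$ and, above all, identifying the sign of the resulting boundary contribution via the Dirichlet condition and the maximum principle — together with the regularity bootstrap needed to make this step (and Lemma \ref{estfromdisplconv}) rigorous; once these are in place, the rest is a rerun of Proposition \ref{F''}.
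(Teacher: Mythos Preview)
Your proof is correct and follows essentially the same approach as the paper's: the regularity bootstrap via $q>d/2$, the application of Lemma~\ref{estfromdisplconv} combined with the optimality condition, discarding the nonnegative $\delta$-term, integrating by parts with $G(s)=sF'(s)-F(s)$ and using the Dirichlet condition plus maximum principle to kill the boundary contribution, and finally approximating a non-smooth $F$ by $C^2$ McCann functions. The only place where the paper is slightly more explicit is in the construction of the approximants (it mollifies $s\mapsto s^dF(s^{-d})$ to preserve the McCann condition) and in the passage to the limit in the $F''$-term, where it points out that $|\nabla\rho|=0$ a.e.\ on the level sets where $F''_\varepsilon\not\to F''_{ac}$, which is what makes the Fatou step work.
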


\begin{proof} Since $\rho \in L^q$  with $q > d/2$ we have that $h[\rho]\in W^{2,q}$ is bounded and HÃ¶lder continuous. Looking at the optimality condition
$$\log \rho +\delta  \rho^{q-1} = c- \varphi/\tau + \chi h[\rho]$$
we deduce  that $\rho$ is also H\"oder continuous, and bounded from above and below. As a consequence, by elliptic regularity, $h[\rho]$ is a $C^2$ function. Then $\rho$ has the same regularity as the worse between $\varphi$ and $h[\rho]$, and in particular $\rho$ is Lipschitz continuous. Now let us assume for a while that $F$ is convex and $C^2$: we start from \eqref{eqn:stimaKS} and replace $u[\rho]$ with $\delta \rho^{q-1}-\chi h[\rho]$. This provides

\begin{eqnarray*}\int_{\Omega} F(\eta) \, dx \geq \int_{\Omega} F(\rho) \, dx &+& \tau \int_{\Omega} F''(\rho)| \nabla\rho|^2dx\\&+& \delta\tau\int_\Omega\rho F''(\rho) \nabla\rho\cdot\nabla(\rho^{p-1})\,dx-\chi\tau \int_\Omega\rho F''(\rho) \nabla\rho\cdot\nabla h[\rho]\,dx. \end{eqnarray*}

Noting that $s\mapsto sF''(s)$ is the derivative of $s\mapsto sF'(s)-F(s)$, we can integrate by parts the last term, thus obtaining 
\begin{eqnarray*}
\int_\Omega\rho F''(\rho) \nabla\rho\cdot\nabla h[\rho]\,dx&=&-\int_\Omega (\rho F'(\rho)-F(\rho)) \Delta h[\rho]\,dx+\int_{\partial\Omega} (\rho F'(\rho)-F(\rho)) \nabla h[\rho]\cdot \nu\, d \sigma\\&\leq& \int_\Omega (\rho F'(\rho)-F(\rho))\rho\,dx.
\end{eqnarray*}
In the above inequality, $d\sigma$ denotes the uniform $(d-1)$-measure on the boundary $\partial\Omega$ (and the terms we integrate on the boundary make sense, because of regularity). Moreover, we used $\nabla h[\rho]\cdot \nu\leq 0$ (as a consequence of the positivity of $h[\rho]$ with its Dirichlet boundary condition on $\partial\Omega$) and $\rho F'(\rho)-F(\rho)\geq 0$ (a onsequence of the convexity of $F$ together with $F(0)=0$). Using this information, and the positivity of $\nabla\rho\cdot\nabla(\rho^{p-1})$, we get
\begin{equation}\label{estimate with FC2}\int_{\Omega} F(\eta) \, dx \geq \int_{\Omega} F(\rho) \, dx + \tau \int_{\Omega} F''(\rho)| \nabla\rho|^2-\chi\tau \int_\Omega (\rho F'(\rho)-F(\rho))\rho\,dx.
\end{equation}
%
%
%since we know that, as a distribution $Id- D^2\varphi \geq 0$, for the singular part we have that $D^2_s\varphi \leq 0$. In particular, using that $\rho, F, F'$ are continuous and then integrating by parts we obtain:
%\begin{align*} \int_{\Omega} [\rho F'(\rho)- F(\rho) ] \Delta_a \varphi \, dx & \geq \int_{\Omega} [\rho F'(\rho)- F(\rho) ] \Delta \varphi \, dx \\
%&= -\int_{\Omega} \nabla [\rho F'(\rho)- F(\rho) ] \cdot\nabla \varphi  \, dx + \int_{\partial \Omega} [\rho F'(\rho)- F(\rho) ] \nabla \varphi \cdot \nu \, d \sigma. \end{align*}
%Now we use that $\nu(x) \cdot \nabla \varphi(x) = \nu (x) \cdot (x-T(x)) \geq 0$ since $T(x) \in \Omega$ and $\Omega$ is convex, and also the chain rule $W^{1,1}$ functions to obtain
%$$ \int_{\Omega} [\rho F'(\rho)- F(\rho) ] \Delta_a \varphi \, dx \geq  - \int_{\Omega} \nabla \rho \cdot \nabla \varphi  F''(\rho) \rho  \, dx,$$
%\begin{equation}\label{eqn:approx} \int_{\Omega} [\rho F'(\rho)- F(\rho) ] \Delta_a \varphi \, dx \geq   \tau \chi \int_{\Omega} |\nabla \rho|^2  F''(\rho)  (1+ \delta  \rho^{q-1}) \, dx  -  \tau \int_{\Omega} [\rho F'(\rho) -F(\rho)]\rho \, dx.\end{equation}
Now let us consider any convex function $F$ (also not smooth) and let us approximate it by smooth convex functions satisfying McCann conditions. In order to do this, instead of directly approximating $F$ by convolution, we approximate by convolution $s\mapsto s^dF(s^{-d})$ and obtain some corresponding $F_\ep$ which satisfy $F_\ep(0)=0$, $F_{\ep} \to F$ uniformly $F'_{\ep} \to F'$ at any differentiability point of $F$ (and $\limsup F'_{\ep} \leq F'_+$ at every non-differentiability point, where $F'_+$ stands for the right-derivative of $F$), and $F''_{\ep} \to F''_{ac}$ almost everywhere, where $F''_{ac}$ is the absolutely continuous part of $F''$. This implies $|\nabla \rho|^2F_\ep''(\rho)\to |\nabla \rho|^2F_{ac}''(\rho)$ since the convergence holds on a.e. level set of $\rho$, and we have $|\nabla\rho|=0$ a.e. on $\{x\in \Omega\,:\, \rho(x)\in A\}$, where $A$ is the set of values on which we do not have the convergence $F''_{\ep} \to F''_{ac}$. Finally, we note that $F'_{\ep}(\rho), F'(\rho)$ are bounded since $ \rho$ is bounded from above and from below. \\
In particular we can pass to the limit in \eqref{estimate with FC2} using Fatou's lemma, thus obtaining

$$\int_{\Omega} F(\eta) \, dx \geq \int_{\Omega} F(\rho) \, dx + \tau \int_{\Omega} F''_{ac}(\rho)| \nabla\rho|^2dx-\chi\tau \int_\Omega (\rho F'_+(\rho)-F(\rho))\rho\,dx,$$
which proves the claim.
\end{proof}

The following estimates requires $d=2$ and $\chi$ sufficiently small.

Indeed, we first use the following fact, which we already mentioned in Section 2: for $\chi\leq 8\pi$, if $d=2$, the functional $\mathcal F:=\Ent + \mathcal{G}$ is bounded from below; hence, if $\chi<8\pi$, then there exist constants $A,B>0$ such that $\Ent(\rho)\leq A+B(\Ent(\rho)+\mathcal G(\rho))$ is true for every $\rho\in\pical(\Omega)$.

Moreover, another point of the proof where we use $d=2$ is an inequality where we use the BV norm to estimate the $L^2$ norm of a given function, which is a two dimensional fact.  

\begin{theorem}\label{lpestimate} Let  $\Omega\subset\R^2$ be a convex set and $\eta \in \mathscr{P}(\Omega) \cap L^p(\Omega)$ be given. For every positive number $e_0$ there is a $D_1=D_1(e_0,p)>0$ such that whenever $\mathcal F(\eta):=\Ent(\eta)+\mathcal G(\eta)\leq e_0$, for $K \geq K(e_0,p)$ there exists $ \rho \in  Prox^{\tau}_{ \Ent+ \mathcal{G}} ( \eta)$ with 

\begin{equation}\int_{\Omega} F_{p,K}(\rho) \, dx \leq \int_{\Omega} F_{p,K}(\eta)\, dx+  \tau D_1. \end{equation}
In particular $\rho \in L^p(\Omega)$; furthermore, denoting $\rho_0= \eta$ and $\rho_{n+1} \in Prox^{\tau}_{ \Ent+ \mathcal{G}} ( \rho_n)$ a measure selected so that the above estimate applies, we have $\| \rho_n \|^p_p \leq \| \rho_0 \|_p^p +(1+n\tau )D_2$ with $D_2$ again depending only on $e$ and $p$. 

Turning to the $L^\infty$ norm, we also have $\| \rho_n \|_{\infty} \leq  (1+n\tau)C (e, \| \rho_0\|_\infty)$.
\end{theorem}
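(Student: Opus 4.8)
The plan is to run the estimate on a regularized functional and then pass to the limit. First I would fix $q\in(1,p]$ and $\delta>0$ and take $\rho_\delta\in Prox^\tau_{\Ent+\delta\mathcal H+\mathcal G}(\eta)$ with $\mathcal H(\rho)=\frac1q\int\rho^q\,dx$; by the opening part of the proof of Proposition~\ref{prop:smoothing}, $\rho_\delta$ is Lipschitz and bounded above and below, so that Proposition may be applied with $F=F_{p,K}$, which is convex and satisfies the $d$-McCann condition for every $p\geq1$ by the proposition recalled above. Writing $c_K:=K^{2/(2p-1)}$ for the level below which $F_{p,K}$ vanishes, a direct computation on $\{\rho_\delta>c_K\}$ gives $\rho F'_{p,K,+}(\rho)-F_{p,K}(\rho)=(p-1)\rho^{p}+\tfrac12 K\rho^{1/2}$ and $F''_{p,K,ac}(\rho)\geq p(p-1)\rho^{p-2}$, whence
$$\int F_{p,K}(\rho_\delta)\leq\int F_{p,K}(\eta)+\tau\chi\int_{\{\rho_\delta>c_K\}}\Big((p-1)\rho_\delta^{p+1}+\tfrac12 K\rho_\delta^{3/2}\Big)dx-\tau\,\tfrac{4(p-1)}{p}\int_{\{\rho_\delta>c_K\}}|\nabla\rho_\delta^{p/2}|^2\,dx.$$

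The heart of the proof is to absorb the first two (a priori bad) terms into the good gradient term. The term $\int\rho_\delta^{p+1}$ is critical in dimension $2$; setting $\sigma:=(\rho_\delta^{p/2}-c_K^{p/2})_+$, so that $|\nabla\sigma|=|\nabla\rho_\delta^{p/2}|$ on $\{\rho_\delta>c_K\}$, and applying the two-dimensional Gagliardo--Nirenberg--Sobolev inequality to $\sigma$ — this is where $d=2$ enters — together with Young's inequality, one obtains an inequality of the form $\int_{\{\rho_\delta>c_K\}}\rho_\delta^{p+1}\leq C_p\,\ep_K\int_{\{\rho_\delta>c_K\}}|\nabla\rho_\delta^{p/2}|^2+C'_p\,c_K^{\,p}\ep_K$, where $\ep_K:=\int_{\{\rho_\delta>c_K\}}\rho_\delta$; a similar, easier, GNS estimate combined with Young bounds $\tfrac12 K\int\rho_\delta^{3/2}$ by $\tfrac{2(p-1)}{p}\int|\nabla\rho_\delta^{p/2}|^2$ plus a constant depending on $K,p,|\Omega|$. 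The decisive point is that $\ep_K$ is small for $K$ large, uniformly along the scheme: since $d=2$ and $\chi<8\pi$ the functional $\mathcal F=\Ent+\mathcal G$ is bounded below and $\Ent(\rho_\delta)\leq A+B\mathcal F(\rho_\delta)\leq A+B(e_0+o_\delta(1))$, so $\ep_K\leq(A+Be_0+1+e^{-1}|\Omega|)/\log c_K$. Choosing $K\geq K(e_0,p)$ so that $\chi C_p\ep_K\leq\tfrac{2(p-1)}{p}$, both gradient contributions are absorbed and one is left with $\int F_{p,K}(\rho_\delta)\leq\int F_{p,K}(\eta)+\tau D_1$ with $D_1=D_1(e_0,p)$ independent of $\delta$ and $\tau$. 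Letting $\delta\to0$, the $\rho_\delta$ converge weakly to some $\rho\in Prox^\tau_{\Ent+\mathcal G}(\eta)$ by the standard stability argument (using $\mathcal H\geq0$), and the weak lower semicontinuity of $\rho\mapsto\int F_{p,K}(\rho)$ transfers the inequality to $\rho$; since $s^p\leq F_{p,K}(s)+Ks^{1/2}$ this also gives $\rho\in L^p$.

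For the iteration I would use that $\mathcal F$ decreases along the JKO scheme, $\mathcal F(\rho_{n+1})\leq\mathcal F(\rho_{n+1})+\frac{W_2^2(\rho_{n+1},\rho_n)}{2\tau}\leq\mathcal F(\rho_n)\leq\cdots\leq\mathcal F(\eta)\leq e_0$, so the hypothesis $\mathcal F\leq e_0$ holds at every step and the same $D_1(e_0,p)$, $K(e_0,p)$ can be used throughout. Summing the one-step estimate gives $\int F_{p,K}(\rho_n)\leq\|\rho_0\|_p^p+n\tau D_1$, and then $\|\rho_n\|_p^p=\int\rho_n^p\leq\int F_{p,K}(\rho_n)+K\int\rho_n^{1/2}\leq\|\rho_0\|_p^p+n\tau D_1+K(e_0,p)|\Omega|^{1/2}\leq\|\rho_0\|_p^p+(1+n\tau)D_2$ by Jensen. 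For the $L^\infty$ bound one cannot simply let $p\to\infty$, since $D_2(e_0,p)$ grows too fast in $p$; instead, for each fixed step $\rho_n\to\rho_{n+1}$ I would run a De Giorgi/Stampacchia truncation, applying Proposition~\ref{prop:smoothing} (again through the $\delta$-regularization) to $F(s)=(s-\lambda)_+^2$ — admissible since $\tfrac{4d}{3d+1}=\tfrac87\leq2$ — at levels $\lambda\geq\|\rho_n\|_\infty$, for which $\int(\eta-\lambda)_+^2=0$; the resulting recursive inequality for $\int(\rho_{n+1}-\lambda)_+^2$, in which the 2D Sobolev embedding and the finite-$p$ bounds just obtained control the nonlinear term, yields $\|\rho_{n+1}\|_\infty\leq\|\rho_n\|_\infty+C\tau$ with $C$ depending only on $e_0$ (through the entropy and $L^r$ bounds) and not on $n$. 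Summing over $n$ gives $\|\rho_n\|_\infty\leq\|\rho_0\|_\infty+Cn\tau\leq(1+n\tau)\,C(e_0,\|\rho_0\|_\infty)$.

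The main obstacle is precisely the absorption step: controlling the critical term $\int\rho^{p+1}$ by the diffusion term $\int|\nabla\rho^{p/2}|^2$. This works only because $d=2$, so that the relevant Sobolev exponent is exactly critical, and because $\chi$ is subcritical, which through $\Ent\leq A+B\mathcal F$ and the monotonicity of $\mathcal F$ forces the mass of $\rho$ on the superlevel set $\{\rho>c_K\}$ to be as small as needed once $K=K(e_0,p)$ is large — exactly the role played by the hypotheses ``$d=2$'', ``$\chi$ small'', ``$\mathcal F(\eta)\leq e_0$'' and ``$K\geq K(e_0,p)$''. A secondary difficulty is keeping every constant in the De Giorgi iteration for $\|\rho_n\|_\infty$ dependent only on $e_0$ and $\|\rho_0\|_\infty$.
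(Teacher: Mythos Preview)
Your argument for the $L^p$ bound is essentially the paper's: regularize with $\delta\mathcal H$, apply Proposition~\ref{prop:smoothing} to $F_{p,K}$, absorb the critical term $\int_{\{\rho>c_K\}}\rho^{p+1}$ into the gradient term using a 2D Sobolev/BV inequality and the smallness of $\int_{\{\rho>c_K\}}\rho$ (controlled via the entropy and hence via $e_0$), and pass to the limit $\delta\to0$. One cosmetic difference: you keep the exact expression $\rho F'_+-F=(p-1)\rho^p+\tfrac12K\rho^{1/2}$ and treat the two pieces separately, while the paper simply bounds this by $p\rho^p\ind_{\rho\ge k}$ (valid since $K\rho^{1/2}\le\rho^p$ on $\{\rho\ge c_K\}$), which avoids your extra GNS step for $\tfrac12K\int\rho^{3/2}$. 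Either way the one-step and iterated $L^p$ estimates go through.

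For the $L^\infty$ bound you take a genuinely different route: a De~Giorgi--Stampacchia truncation with $F(s)=(s-\lambda)_+^2$ at levels $\lambda\ge\|\rho_n\|_\infty$, aiming for $\|\rho_{n+1}\|_\infty\le\|\rho_n\|_\infty+C\tau$. The paper instead uses a Moser-type iteration in $p$: it reruns the $L^{p_i}$ argument for $p_i=2^i+1$, but at each stage uses the $L^{p_{i-1}}$ bound (rather than only the entropy) to sharpen the admissible threshold $k(e_0,p_i)$, obtaining a recursion $k_{i+1}\le(C2^{2i}(2+\chi T))^{2^{-i}}k_i^{1+2^{-i}}$ whose product is finite and gives $\|\rho_n\|_\infty\le D(2+\chi T)\max\{k(e_0,5),\|\rho_0\|_\infty\}^2$. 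The Moser route is self-contained within the $F_{p,K}$ machinery and yields the linear-in-$T$ bound directly.

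Your De~Giorgi sketch has a gap you should be aware of. The one-step truncation inequality reads
\[
\int(\rho_{n+1}-\lambda)_+^2+2\tau\!\int|\nabla(\rho_{n+1}-\lambda)_+|^2\le \tau\chi\!\int(\rho_{n+1}-\lambda)_+(\rho_{n+1}+\lambda)\rho_{n+1},
\]
and to run the level-set recursion you must control the cubic term via an $L^r$ bound on $\rho_{n+1}$. But the only $L^r$ bound available is the one you just proved, $\|\rho_m\|_r^r\le\|\rho_0\|_r^r+(1+m\tau)D_2$, which \emph{grows with $m\tau$}. Hence the constant $C$ in your claimed $\|\rho_{n+1}\|_\infty\le\|\rho_n\|_\infty+C\tau$ cannot depend ``only on $e_0$'': it depends on $\|\rho_{n+1}\|_r$, hence on $n\tau$, and after summation you obtain a bound that is superlinear in $n\tau$, not the linear one in the statement. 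The paper's Moser iteration avoids this because at each doubling of $p$ the threshold $k$ is chosen using a bound that already encodes the correct $T$-dependence, and the infinite product converges. If you want to keep the De~Giorgi approach, you would need either to bootstrap the $L^r$ input more carefully (so that its growth is sub-polynomial in $T$) or to accept a weaker-than-linear $L^\infty$ bound.
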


\begin{proof} Let us consider $\rho_{\delta} = Prox^{\tau}_{\Ent + \delta H + \mathcal{G}} (\eta)$ as in Proposition \ref{prop:smoothing}. Then, we use $F=F_{p,K}$ with $K=k^{p- \frac{ d-1}d}$ in \eqref{eqn:stimaKS}. First note that we have $F_{p,K}>0$ if and only if $s>k$ as well as
$$0\leq \rho(F'_{p,K})_+(\rho)-F_{p,K}(\rho))\leq p\rho^p\ind_{\rho\geq k}, \quad (F''_{p,K})_{ac}(\rho)\geq p(p-1)\rho^{p-2}\ind_{\rho\geq k}.$$
This allows to write the inequality
\begin{equation}\label{eqn:stimagiusta} \int_{\Omega} F_{p,K}(\eta) \, dx-  \int_{\Omega} F_{p,K} (\rho_{\delta}) \, dx \geq - \tau \chi p\int_{\rho_\delta\geq k}\rho_\delta^{p+1} \, dx+  \tau p(p-1)\int_{\rho_\delta\geq k}|\nabla\rho_\delta|^2\rho_\delta^{p-2}\, dx,\end{equation}
%$$\int_{\rho \geq \tilde{K}} (p-1) \rho_{\delta}^{p+1} + \frac{K}n \rho_{\delta}^{2-1/n} \, dx   +  \tau \int_{\rho \geq \tilde{K} } | \nabla \rho_{\delta}|^2 ( p(p-1) \rho_{\delta}^{p-2} + K \frac {n-1}{n^2} \rho_{\delta}^{-1-1/n} ) \, dx $$
%where $R_{p,k}  (\rho) =  p \chi \int_{ \rho \geq k} |\nabla \rho|^2 \rho^{p-2}  - \int_{\rho \geq k} \rho^{p+1}$. 
Consider a constant $c_1=c_1(\Omega)$such that we have $|Du|(\Omega) \geq c_1 \| u\|_2$ for every function $u$ satisfying $ 2|\{ |u| >0\}| \leq |\Omega|$. Then, whenever $ 2|\{\rho \geq k\}|\leq \Omega$ (note that $| \Omega| \geq 2/k$ is enough) and $p>-1$, we have: 

\begin{align*} \int_{\rho \geq k} \rho\, dx \int_{\rho \geq k} \!| \nabla \rho|^2 \rho^{p-2} dx& \geq \left( \int_{\rho \geq k} | \nabla \rho | \rho^{\frac {p-1}2}\, dx \right)^2  = \frac 4{(p+1)^2} \left( \int_{\Omega} | \nabla (\rho^{\frac{p+1}2} - k^{\frac{p+1}{2}} )_+ | \, dx \right)^2 \\
& \geq  \frac{4c_1^2}{(p+1)^2} \int_{\Omega} (\rho^{\frac {p+1}2}-k^{\frac{p+1}2})_+^{2}\, dx \\
\end{align*}

Now we can use the inequality (valid for $p>0$)
$$\rho^{p+1} \leq \begin{cases} \frac {(\rho^{\frac {p+1}2}-k^{\frac{p+1}2})_+^{2}}{(1-2^{-(p+1)/2p})^2}  \qquad &\text{ if }\rho \geq 2^{1/p}k \\ 2k^p \rho & \text{ if }0\leq \rho \leq 2^{1/p}k. \end{cases}$$ 
Using $\frac{ p+1}{2p} > \frac 12$, we have $1/(1-2^{-\frac {p+1}{2p}})^2 \leq (1- 1/\sqrt{2})^{-2}= 6+4\sqrt{2} \leq 12$, and we find 
$$ \int_{\Omega} \rho^{p+1}\, dx \leq   2k^p\int_{\Omega} \rho \, dx + 12 \int_{\Omega} (\rho^{\frac {p+1}2}-k^{\frac{p+1}2})_+^{2} \, dx$$
$$ 12 \int_{\Omega} (\rho^{\frac {p+1}2}-k^{\frac{p+1}2})_+^{2} \, dx\geq \int_{\Omega} \rho^{p+1} \,dx -   2k^p.$$
In particular we find that
$$\int_{\rho_\delta \geq k} | \nabla \rho_\delta|^2 \rho_\delta^{p-2}\, dx \geq \frac{c_1^2}{3(p+1)^2\int_{\rho_\delta \geq k} \rho_\delta\, dx } \left( \int_{\Omega} \rho_\delta^{p+1} \, dx- 2k^p \right). $$ 
Now, it is sufficient to find $k $ such that 
$$\int_{\rho_{\delta} \geq k} \rho_{\delta}\, dx \leq \alpha(p):= \frac{(p-1)c_1^2}{3(p+1)^2\chi}$$
in order to obtain

%\begin{align*} 
$$\int_{\Omega} F_{p,K}(\eta) \, dx-  \int_{\Omega} F_{p,K} (\rho_{\delta}) \, dx 
%&\geq  \tau (p-1) R_{p,k}( \rho_{\delta} ) +  \tau \frac{ k^{p-\frac{n-1}n}}{d} R_{1-\frac 1d, k} ( \rho_{\delta} )  
\geq -2\tau\chi p k^p, $$
which would give the first part of the claim.
%,\\
%& \geq - \tau \left( p-1+ \frac 1{d} \right) 2k^p  > - \tau 2p k^p
%\end{align*}

In order to estimate  $\int_{\rho_{\delta} \geq k} \rho_{\delta}$ we just use \begin{align*}
\int \rho_{\delta} | \log \rho_\delta|\leq \Ent(\rho_\delta)+|\Omega|e^{-1} & \leq A+|\Omega|e^{-1}+ B\mathcal F(\rho_{\delta})    \\
& \leq A+|\Omega|e^{-1}+ B\min_\rho\left\{  \mathcal{F}(\rho) + \delta \mathcal H(\rho)+\frac{W_2^2(\rho,\eta)}{2\tau}\right\}.
\end{align*}
By $\Gamma$-convergence, for $\delta$ sufficiently small we have
\begin{eqnarray*}
\min_\rho\left\{ \mathcal{F}(\rho) + \delta \mathcal H(\rho)+\frac{W_2^2(\rho,\eta)}{2\tau}\right\}&\leq& \min_\rho\left\{ \mathcal{F}(\rho) + \frac{W_2^2(\rho,\eta)}{2\tau}\right\}+1\\&\leq&   \mathcal{F}(\eta) +1\leq e_0+1.
\end{eqnarray*}

Then we can choose $k$ looking at
$$ \int_{\rho_{\delta} \geq k} \rho_{\delta} \leq \frac 1{ \log k } \int_{\rho_{\delta} \geq k}   \rho_{\delta} | \log \rho_{\delta}| \leq \frac { A+|\Omega|e^{-1}+B(e_0+1)}{\log k }.$$
It is then enough to choose $k$ large enough depending on $e_0$ and $p$, and in particular we choose
$$k=k(e_0,p)\quad \mbox{ with }\quad \frac { A+|\Omega|e^{-1}+B(e_0+1)}{\log k(e_0,p) }=  \frac{(p-1)c_1^2}{3(p+1)^2\chi}.$$
The value of $K(e_0,p)$ is defined accordingly, and then we find, for $K\geq K(e_0,p)$:
$$ \int_{\Omega} F_{p,K}(\rho_{\delta})\, dx \leq \int_{\Omega} F_{p,K}(\eta) \, dx + \tau D_1.$$

Now we let $\delta\to 0$: up to a subsequence we have $\rho_{\delta} \rightharpoonup \rho$ and the limit $\rho$ satisfies the same inequality; moreover we have also that $\rho \in Prox_{ \Ent+ \mathcal{F}}^{\tau}(\eta)$ thanks to the $\Gamma$-convergence of the functionals. For the global in time estimate we can iterate the previous result thanks to the fact that $\mathcal F(\rho_n)$ is decreasing in $n$, in order to get 
$$ \int_{\Omega} F_{p,K}(\rho_n) \, dx \leq  \int_{\Omega} F_{p,K}(\rho_0) \, dx + n \tau D_1. $$
Then we can use the  inequalities 
$$-k^{p-1} \rho + \rho^p \leq (\rho^p -k^{p-1} \rho)_+ \leq F_{p,K}(\rho) \leq \rho^p,$$ 
(remember $K=k^{p-(d-1)/d}$) to conclude 
\begin{align*} \int_{\Omega} | \rho_n|^p \, dx &\leq \int_{\Omega} F_{p,K}(\rho_n) \,dx + k(e_0,p)^{p-1} \leq \int_{\Omega} | \rho_0|^p \,dx + n \tau D_1 + k(e_0,p)^{p-1}   \\ & \leq \int_{\Omega} | \rho_0|^p \,dx + ( 1+ \chi n \tau )2pk(e_0,p)^p, \end{align*}
where in the last inequality we use the dependence of $D_1$ in terms of $k$ and we suppose $k\geq 1$.

For the $L^{\infty}$ estimate we cannot simply pass to the limit the $L^p$ inequality we just found since $k(e_0,p) \to \infty$ as $p \to \infty$. However we can iterate the procedure, finding better estimates for $k(e_0,p)$ using the fact that we have some explicit bounds on $\| \rho_m \|_p$: let us fix $n$ and let us consider $T=n\tau$.  We will consider iteratively $p_i=2^{i}+1$. Choosing always $k(e_0,p) \geq \| \rho_0\|_{\infty}$ we find that 
\begin{equation}\label{eq:it1} D(p_i) = \sup_{m \leq n}\|  \rho_m \|_{p_i}   \leq ((2+\chi T) 2p_i)^{1/p_i} k(e_0,p_i); \end{equation}
For the iterative step, we have
$$ \int_{\rho \geq k} \rho_{\delta} \leq \frac 1{ k^{p_i-1} } \int_{\rho \geq k}   \rho_{\delta}^{p_i} \, dx\leq \frac { D(p_i)^{p_i}}{k^{p_i-1} }; $$
in particular it is sufficient to choose
$$ k(e_0,p_{i+1}) = \left(\frac{ D(p_i)^{p_i}}{\alpha(p_i)}\right)^{\frac 1{p_i-1}};$$
now we can use $\alpha(p_i) \geq C/p_i \geq C 2^{-i} $ (where $C$ also depends on $\chi$) and \eqref{eq:it1} to obtain
$$ k(e_0,p_{i+1}) \leq \left(C2^{1+2i}(2+\chi T)\right)^{\frac 1{2^{i}}} k(e_0,p_i)^{1+ \frac 1{2^i}}.$$
From here we can derive a uniform bound for $k(e_0,p_i)$: indeed, defining $c_{i+1} = \prod_{j=2}^i (1+\frac 1{2^j}) $ and writing $k_i:=k(e_0,p_i)$, we have 
\begin{align*}
k_{i+1}^{ \frac 1{c_{i+1}}} & \leq  \left(C2^{1+2i}(2+\chi T)\right)^{\frac 1{2^{i}c_{i+1}}} k_i^{ \frac 1{c_i}} \\
& \leq k_2 \prod_{j=2}^{i+1}  \left(C2^{1+2j}(2+\chi T)\right)^{\frac 1{2^{j}}}\\
& \leq D k_2\sqrt{2+\chi T} .
\end{align*}
In particular , using \eqref{eq:it1} together with the last estimate and the fact that $c_i \leq 2$, we can say that we have
$$\|\rho_n \|_{\infty} \leq \lim_{i \to \infty} D(p_i) \leq \lim_{i \to \infty} k(e_0,p_i) \leq D (2+\chi T) \max \{k(e_0,5),\| \rho_0\|_{\infty}\} ^2.$$
\end{proof}

The above estimate is the discrete counterpart of a well-known result studied in continuous time, which can be found for instance in \cite{Per-book}, and proven in \cite{JL} and \cite{DolPer} (more precisely: \cite{JL}  showed that equi-integrability of $\rho$ is enough to propagate in time the estimates on the $L^p$ norms, and \cite{DolPer} found the sharp condition to bound the entropy of the solution, and hence provide equi-integrability).

We note that in the above $L^p$ estimate we obtain a linear growth in time of the $L^p$ norm raised to the power $p$, i.e. on $\int \rho^pdx$, so that the norm itself has much slower growth. In this concern the estimate on the $L^\infty$ norm is most likely not sharp, as it is the norm itself which grows linearly.

Concerning the $L^\infty$ estimates, we remind that other $L^\infty$ bounds have been found on a (perturbed) JKO scheme in \cite{CarSanKS}, but those bounds always explose in finite time (at time $T=1/||\rho_0||_{\infty}$). On the other hand, they have the advantage that they are true for any form of diffusion, and that they require no condition on $\chi$, nor on the dimension.

\begin{remark}With similar but more tedious calculation it is possibile also to get hypercontractivity estimates (improvement in time of the summability exponent) in the JKO setting:  this could potentially weaken the integrability requirement on $\rho_0$ in Theorem~\ref{thm:main}, but then we  would need a different analysis for the first steps, where the integrability assumption $\rho \in L^r$ is still not satisfied. For this reason we don't want to pursue this direction here, but it would be for sure interesting.
\end{remark}

In a similar but different spirit we also mention that it is possible to obtain $L^q$ estimates in time and space starting from $L^p$ assumptions on $\rho_0$, as it is done in Lemma 2.11 of \cite{KwonMes}, but we do not investigate this question here since we decided to concentrate on bounds which are not integrated in time but derive from a decreasing behavior from a step of the JKO scheme to the next one.
%
%
%\section{Hypercontractivity}
%
%{\color{red} da fare anche per FP}
%
%\begin{remark} We can obtain at the discrete level also the hypercontractivity estimate, using that, whenever $C\tau \leq 1/2$ there exists $c>0$ such that we have
%$$\int F_{p+C\tau, k(1+c\tau)} ( \rho_{\delta}) \, dx \leq \int F_{p,k}( \rho ) \, dx + \tau D.$$
%Using this inequality we can prove that there exists a solution to the Keller-Segel equation such that if $\rho_0 | \log \rho_0| \in L^1(\Omega)$, then $\rho_t \in L^p$ for every $t>0$.
%\end{remark}

\section{Sobolev estimates}

In this section we pass to the core of the paper, i.e. the higher order estimates. The goal will be to obtain results comparable to those of Proposition \ref{summary warmup}, but for norms involving the gradient of $\rho$.
%
%\begin{lemma}\label{lem:cov} Let us consider two absolutely continuous measures $\rho, \eta \in \mathscr{P}(\Omega)$, where $\Omega$ is a convex bounded set. We denote with $\varphi, \psi$ the Kantorovich potentials relative to $\rho$ and $\eta$, respectively and with $T(x)=x-\nabla \psi(x)$  the  optimal map between $\eta$ and $\rho$. Then for every $f_1 \in L^1( \Omega, \rho; \R^k)$ and $f_2 : \R^d \to \R^k$ locally bounded, we have
%
%\begin{equation}\label{eqn:cov} \int_{\Omega} f_1(y) \cdot f_2 (\nabla \varphi(y)) \, d \rho(y) = \int_{\Omega} f_1( T(x)) \cdot f_2 ( -\nabla \psi(x)) \, d \eta (x)\end{equation}
%
%\end{lemma}
%
%\begin{proof} First we observe that $\nabla \varphi, \nabla \psi \in L^{\infty}(\Omega)$ since $\Omega$ is bounded. In particular $f_2(\nabla \varphi), f_2(\nabla \psi)$ are bounded as well. Now it is sufficient to apply the change of variable $y=T(x)$ and then observe that we have $\nabla \varphi ( T(x)) = -\nabla \psi$. 
%\end{proof}

\begin{lemma}\label{generalH} Let us consider a functional $\mathcal{G} : \mathscr{P} (\Omega) \to \R \cup \{\infty\}$ and a probability $\eta\in \pical(\Omega)$. Take $\rho \in Prox_{ \Ent +\mathcal{G}}^{\tau} (\eta)$ and set $u[\rho]:=\delta\mathcal G/\delta\rho$. Suppose that $\Omega$ is convex and let $H:\R^d\to\R$ be a radial convex function. Set 
$$Z_\rho:=\frac{\nabla\rho}{\rho}+\nabla u[\rho],\quad Z_\eta:=\frac{\nabla\eta}{\eta}+\nabla u[\eta]$$
and call $\varphi$ and $\psi$ the Kantorovich potentials for the transport from $\rho$ to $\eta$, with $T=id-\nabla\varphi$ the optimal transport map from $\rho$ to $\eta$.
Then we have
$$\int H(Z_\eta)d\eta\geq \int H(Z_\rho)d\rho+\int \nabla H\left(\frac{\nabla\varphi}{\tau}\right)\cdot \left(\nabla u[\rho]-\nabla u[\eta]\circ T\right)d\rho.$$
\end{lemma}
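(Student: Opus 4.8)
The plan is to combine the first-order (subgradient) inequality for the convex function $H$ with the five-gradients inequality of Lemma~\ref{5GI}, the bridge between the two being the Euler--Lagrange equation of the JKO step, which identifies $Z_\rho$ with a rescaled Kantorovich potential. \textbf{Step 1 (optimality condition).} Exactly as in the proof of Proposition~\ref{F''}, minimality of $\rho$ in $Prox^{\tau}_{\Ent+\mathcal G}(\eta)$ gives $\log\rho+u[\rho]+\varphi/\tau=\mathrm{const}$ a.e.\ on $\{\rho>0\}$, and since $\rho>0$ a.e.\ we obtain $Z_\rho=\nabla\rho/\rho+\nabla u[\rho]=-\nabla\varphi/\tau$ a.e. Throughout I tacitly assume the regularity needed for the manipulations below (namely $\rho,\eta\in W^{1,1}(\Omega)$, $\rho>0$ a.e., and $\varphi,\psi$ differentiable $\rho$-, resp.\ $\eta$-a.e.), as will be verified in each concrete application; when $H$ is merely $C^1$ away from the origin we follow the convention $\nabla H(0):=0$ of the remark after Lemma~\ref{5GI}, which keeps all the inequalities below valid since $0\in\partial H(0)$. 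Because $H$ is radial, $\nabla H$ is odd; in particular $H(Z_\rho)=H(\nabla\varphi/\tau)$ and $\nabla H(Z_\rho)=-\nabla H(\nabla\varphi/\tau)$.

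\textbf{Step 2 (subgradient inequality).} Apply $H(b)\ge H(a)+\nabla H(a)\cdot(b-a)$ with $a=Z_\rho(x)$ and $b=Z_\eta(T(x))$, integrate against $\rho$, and use $T_\#\rho=\eta$ (so that $\int H(Z_\eta\circ T)\,d\rho=\int H(Z_\eta)\,d\eta$) to obtain
$$\int H(Z_\eta)\,d\eta\ \ge\ \int H(Z_\rho)\,d\rho+\int\nabla H(Z_\rho)\cdot\big(Z_\eta\circ T-Z_\rho\big)\,d\rho.$$
Now decompose $Z_\eta\circ T-Z_\rho=\big(\nabla u[\eta]\circ T-\nabla u[\rho]\big)+\big((\nabla\eta/\eta)\circ T-\nabla\rho/\rho\big)$; by Step 1 the first bracket, paired with $\nabla H(Z_\rho)=-\nabla H(\nabla\varphi/\tau)$, contributes precisely the term $\int\nabla H(\nabla\varphi/\tau)\cdot(\nabla u[\rho]-\nabla u[\eta]\circ T)\,d\rho$ appearing in the statement, so it only remains to show that the remainder
$$R:=\int\nabla H(Z_\rho)\cdot\Big(\big(\tfrac{\nabla\eta}{\eta}\big)\circ T-\tfrac{\nabla\rho}{\rho}\Big)\,d\rho$$
is nonnegative.

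\textbf{Step 3 (the remainder is a five-gradients quantity).} Here I invoke Brenier's theorem: if $T=\id-\nabla\varphi$ is the optimal map $\rho\to\eta$ and $\psi$ is the conjugate potential, then $\id-\nabla\psi$ is the optimal map $\eta\to\rho$ and $\nabla\psi\circ T=-\nabla\varphi$ $\rho$-a.e.; combined with the oddness of $\nabla H$ this gives $\nabla H(Z_\rho)=\big(\nabla H(\nabla\psi/\tau)\big)\circ T$. Substituting into $R$, the part involving $(\nabla\eta/\eta)\circ T$ becomes, after changing variables back via $T_\#\rho=\eta$, the integral $\int_\Omega\nabla\eta\cdot\nabla H(\nabla\psi/\tau)\,dx$, while the part involving $\nabla\rho/\rho$ becomes $\int_\Omega\nabla\rho\cdot\nabla H(\nabla\varphi/\tau)\,dx$; hence
$$R=\int_\Omega\nabla\rho\cdot\nabla H(\nabla\varphi/\tau)\,dx+\int_\Omega\nabla\eta\cdot\nabla H(\nabla\psi/\tau)\,dx\ \ge\ 0$$
by Lemma~\ref{5GI} applied to the radial convex function $z\mapsto H(z/\tau)$ (whose gradient at $z$ is $\tau^{-1}\nabla H(z/\tau)$). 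This closes the proof.

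The main obstacle is really the sign and change-of-variables bookkeeping of Step 3 — organizing things so that the ``error'' produced by the convexity inequality of Step 2 is exactly a five-gradients expression — together with the underlying regularity issues ($\rho,\eta\in W^{1,1}$, $\rho$ bounded away from $0$, a.e.\ differentiability of $\varphi$ and $\psi$), which are dispatched via the optimality condition and the regularity theory already used in Proposition~\ref{F''} and in each of the concrete functionals (Fokker--Planck, interaction, Keller--Segel) to which the lemma will be applied.
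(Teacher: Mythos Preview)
Your proof is correct and follows essentially the same approach as the paper's: both arguments combine the Euler--Lagrange identity $Z_\rho=-\nabla\varphi/\tau$, a first-order convexity inequality for $H$, the relation $\nabla\psi\circ T=-\nabla\varphi$, and the five-gradients inequality to absorb the $\nabla\rho,\nabla\eta$ contributions. The only cosmetic difference is that the paper applies the subgradient inequality on the $\eta$ side (base point $-\nabla\psi/\tau$, target $-Z_\eta$, integrated against $\eta$) and then transports, whereas you transport first ($\int H(Z_\eta)\,d\eta=\int H(Z_\eta\circ T)\,d\rho$) and apply the subgradient inequality on the $\rho$ side (base point $Z_\rho$, target $Z_\eta\circ T$); since $H$ is even these are the same computation.
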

\begin{proof}
We start from the fact that $H$ is radial and hence even, and that it is convex:
$$\int H(Z_\eta)d\eta=\int H(-Z_\eta)d\eta\geq \int H\left(\frac{-\nabla\psi}{\tau}\right)d\eta+ \int \nabla H\left(\frac{-\nabla\psi}{\tau}\right)\cdot\left(-Z_\eta+\frac{\nabla\psi}{\tau}\right)d\eta.$$
We look at the different parts of the right-hand sice.
First we use we use $\eta=T_\#\rho$ and $-\nabla\psi\circ T=\nabla\varphi$, together with the optimality condition $Z_\rho+\frac{\nabla \varphi}{\tau}=0$ and again the fact that $H$ is even, in order to get
$$\int H\left(\frac{-\nabla\psi}{\tau}\right)d\eta=\int H\left(\frac{\nabla\varphi}{\tau}\right)d\rho= \int H(Z_\rho)d\rho.$$
Using again $\eta=T_\#\rho$ and $-\nabla\psi\circ T=\nabla\varphi$, we obtain
\begin{eqnarray*}
\int \nabla H\left(\frac{-\nabla\psi}{\tau}\right)\cdot\frac{\nabla\psi}{\tau}d\eta&=&-\int \nabla H\left(\frac{\nabla\varphi}{\tau}\right)\cdot\frac{\nabla\varphi}{\tau}d\rho=\int \nabla H\left(\frac{\nabla\varphi}{\tau}\right)\cdot Z_\rho d\rho\\
&=&\int \nabla H\left(\frac{\nabla\varphi}{\tau}\right)\cdot \nabla\rho \,dx+\int \nabla H\left(\frac{\nabla\varphi}{\tau}\right)\cdot \nabla u[\rho]d\rho.
\end{eqnarray*}
We now pass to the part involving $Z_\eta$, and write
\begin{eqnarray*}
 \int \nabla H\left(\frac{-\nabla\psi}{\tau}\right)\cdot\left(-Z_\eta\right)d\eta&=&\int \nabla H\left(\frac{\nabla\psi}{\tau}\right)\cdot\left(Z_\eta\right)d\eta\\
 &=&\int \nabla H\left(\frac{\nabla\psi}{\tau}\right)\cdot\nabla\eta\,dx+\int \nabla H\left(\frac{\nabla\psi}{\tau}\right)\cdot\nabla u[\eta]\, d\eta\\
 &=&\int \nabla H\left(\frac{\nabla\psi}{\tau}\right)\cdot\nabla\eta\,dx-\int \nabla H\left(\frac{\nabla\varphi}{\tau}\right)\cdot\nabla u[\eta]\circ T\,d\rho.
\end{eqnarray*}
Summing up all the terms, and using the five-gradient inequalities (which requires $H$ to be radial in order to handle the boundary terms)
$$\int \nabla H\left(\frac{\nabla\psi}{\tau}\right)\cdot\nabla\eta \, dx+\int \nabla H\left(\frac{\nabla\varphi}{\tau}\right)\cdot \nabla\rho \, dx\geq 0,$$
we obtain the desired result.
\end{proof}

The quantities of the form $\int H(Z_\rho)d\rho$ will be crucial for the Sobolev regularity of the solutions of the JKO scheme. We will then often note $J_{(p)}(\rho):= \int H(Z_\rho)d\rho$ when $H(z)=|z|^p$, without explicit reference to the term $u[\rho]$, which will be clear from the context.

\subsection{Fokker-Planck and aggregation}

We will see some consequences of Lemma \ref{generalH}, starting from the easiest case, i.e. the purely linear Fokker-Planck case: $u[\rho]=V$ and $\mathcal G(\rho):=\int V\,d\rho$.

\begin{proposition}\label{decrHZ} Let us consider $\mathcal G(\rho):=\int Vd\rho$ and $\rho \in Prox_{ \Ent + \mathcal{G}}^{\tau} (\eta)$. Suppose that $\Omega$ is convex and that $V$ is semi-convex, i.e. $D^2V\geq \lambda I$.
Then, if $\lambda=0$ and $H:\R^d\to\R$ is an arbitrary radial convex function we have 
$$\int H(Z_\rho)d\rho\leq \int H(Z_\eta)d\eta.$$
If $\lambda>0$ and $H(0)=0$, we also have
$$(1+\lambda \tau)\int H(Z_\rho)d\rho\leq \int H(Z_\eta)d\eta.$$
For $\lambda<0$, if $H$ satisfies $\nabla H(z)\cdot z\leq C(H(z)+1)$ then 
$$(1-|\lambda| C\tau)\int H(Z_\rho)d\rho\leq \int H(Z_\eta)d\eta+C\tau.$$
\end{proposition}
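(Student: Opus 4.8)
The plan is to specialize Lemma~\ref{generalH} to the Fokker--Planck case $u[\rho]=u[\eta]=V$ and then to control the single error term using the semiconvexity of $V$ together with the radial structure of $H$. Since $V$ is Lipschitz (being $\lambda$-convex and finite on the bounded convex $\Omega$), Proposition~\ref{F''} guarantees that $\rho$ is Lipschitz and bounded below by a positive constant, so $Z_\rho$ and the optimality relation $Z_\rho=-\nabla\varphi/\tau$ make sense a.e.\ and Lemma~\ref{generalH} applies; with $T=id-\nabla\varphi$ it gives
$$\int H(Z_\eta)\,d\eta\ \ge\ \int H(Z_\rho)\,d\rho+R,\qquad R:=\int \nabla H\!\left(\tfrac{\nabla\varphi}{\tau}\right)\cdot\bigl(\nabla V(x)-\nabla V(T(x))\bigr)\,d\rho(x),$$
so the whole proof reduces to lower bounds on $R$.

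Write $H(z)=h(|z|)$ with $h$ convex on $[0,\infty)$; looking at $t\mapsto h(|t|)$ along a line through the origin, convexity forces $h'(0^+)\ge 0$ and hence $h'\ge 0$ on $[0,\infty)$, so that $\nabla H(z)=h'(|z|)\,z/|z|$ is a nonnegative multiple of $z$ (and $\nabla H(0)=0$, which is harmless since the integrand of $R$ vanishes wherever $\nabla\varphi=0$, i.e.\ $x=T(x)$). Using $\nabla\varphi(x)=x-T(x)$, the integrand of $R$ at $x$ equals $\tfrac{h'(|\nabla\varphi|/\tau)}{|\nabla\varphi|}\,(x-T(x))\cdot(\nabla V(x)-\nabla V(T(x)))$, and $D^2V\ge\lambda I$ yields the pointwise monotonicity bound $(x-T(x))\cdot(\nabla V(x)-\nabla V(T(x)))\ge\lambda|x-T(x)|^2$. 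Setting $r(x):=|\nabla\varphi(x)|/\tau$, so that $H(Z_\rho)=h(r)$ by the optimality condition and $\tfrac{h'(r)}{|\nabla\varphi|}|x-T(x)|^2=\tau\,r\,h'(r)$, we arrive at
$$R\ \ge\ \lambda\tau\int r\,h'(r)\,d\rho.$$

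Now the three cases follow. If $\lambda=0$ then $R\ge 0$ and we conclude immediately. If $\lambda>0$, convexity of $h$ together with $h(0)=H(0)=0$ gives $r\,h'(r)\ge h(r)$ (the tangent line to $h$ at $r$, evaluated at $0$), so $R\ge\lambda\tau\int h(r)\,d\rho=\lambda\tau\int H(Z_\rho)\,d\rho$ and hence $(1+\lambda\tau)\int H(Z_\rho)\,d\rho\le\int H(Z_\eta)\,d\eta$. If $\lambda<0$, we instead use the growth hypothesis in the form $r\,h'(r)=\nabla H(\nabla\varphi/\tau)\cdot\tfrac{\nabla\varphi}{\tau}\le C\bigl(H(\nabla\varphi/\tau)+1\bigr)=C\bigl(H(Z_\rho)+1\bigr)$, which gives $R\ge-|\lambda|C\tau\bigl(\int H(Z_\rho)\,d\rho+1\bigr)$ and therefore $(1-|\lambda|C\tau)\int H(Z_\rho)\,d\rho\le\int H(Z_\eta)\,d\eta+|\lambda|C\tau$, i.e.\ the claim after relabelling $|\lambda|C$ as the constant.

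The conceptual content is entirely in Lemma~\ref{generalH} and in the regularity of $\rho$ (which is where the real work, via the five-gradients inequality and the optimality conditions, has already been done); the present argument only adds the elementary pointwise inequalities for $\nabla V$ and for a one-variable convex $h$ with $h(0)=0$. The one point to be careful about is the sign bookkeeping: because $\nabla H(\nabla\varphi/\tau)$ is a nonnegative multiple of $\nabla\varphi=x-T(x)$, the sign of $R$ is governed solely by the sign of $\lambda$, which is exactly why the three regimes split the way they do.
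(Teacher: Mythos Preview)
Your proof is correct and follows essentially the same route as the paper: both invoke Lemma~\ref{generalH}, reduce everything to a lower bound on the single error term, and then exploit that $\nabla H(\nabla\varphi/\tau)$ is a nonnegative multiple of $\nabla\varphi$ (radiality) together with the monotonicity inequality $(\nabla V(x)-\nabla V(T(x)))\cdot\nabla\varphi(x)\ge\lambda|\nabla\varphi(x)|^2$ from semiconvexity, splitting into the three regimes via $rh'(r)\ge h(r)$ (when $h(0)=0$) or the growth hypothesis. One minor caution: your sentence ``$V$ is Lipschitz (being $\lambda$-convex and finite on the bounded convex $\Omega$)'' is not literally true without an extra boundedness or boundary assumption (think of $-\sqrt{1-|x|^2}$), so the regularity input should be taken as an implicit hypothesis rather than a consequence; the paper is equally silent on this point.
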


\begin{proof} All these results are just a consequence of Lemma \ref{generalH}. They can be obtained if one estimates the term 
$\int \nabla H\left(\frac{\nabla\varphi}{\tau}\right)\cdot \left(\nabla u[\rho]-\nabla u[\eta]\circ T\right)d\rho.$
First, note that since $H$ is radial, the vectors $\nabla H\left(\nabla\varphi/\tau\right)$ and $\nabla\varphi$ are parallel and oriented in the same direction. We also use $u[\rho]=u[\eta]=V$ and the assumptions on $V$. Indeed we have
$$(\nabla V(x)-\nabla V(T(x)))\cdot \nabla\varphi(x)=(\nabla V(x)-\nabla V(x-\nabla\varphi(x)))\cdot \nabla\varphi(x)\geq \lambda |\nabla\varphi(x)|^2,$$
thanks to the $\lambda$-convexity of $V$. In the case $\lambda=0$ this is enough to obtain the claim.

For $\lambda>0$, we write
$$\int\! \nabla H\!\left(\!\frac{\nabla\varphi}{\tau\!}\right)\cdot \left(\nabla V-\nabla V\circ T\right)d\rho=\int \frac{\left| \nabla H\left(\frac{\nabla\varphi}{\tau}\right)\right|}{\left| \frac{\nabla\varphi}{\tau}\right|}(\nabla V-\nabla V\circ T)\cdot \nabla\varphi\, d\rho\geq \lambda\tau\int H\left(\!\frac{\nabla\varphi}{\tau}\!\right) d\rho,$$
where we used the inequality $|\nabla H(z)|\geq H(z)/|z|$ which is valid for radial convex functions with $H(0)=0$ and the same estimate due to the $\lambda$-convexity of $V$ as above. This allows to prove the second part of the claim.

In the case $\lambda<0$, the estimate is similar, but since we estimate the scalar product $(\nabla V(x)-\nabla V(T(x)))\cdot \nabla\varphi(x)$ from below with $\lambda |\nabla\varphi(x)|^2,$ which is negative, we need to estimate $|\nabla H(z)|$ from above, and for this we use our assumption on $H$ (which is essentially an assumption of polynomial growth for $H$; note that, $H$ being radial, we have $\nabla H(z)\cdot z=|\nabla H(z)||z|$).
\end{proof}

\begin{remark} As we underlined in the introduction, the above result is a time-discrete translation of a suitable intergal version of a well-known estimate in the Bakry-Emery theory (again, we refer for instance to \cite{BakGenLed}). Indeed, the time-continuous equation satisifed by $\rho$ when taking the gradient flow of $\Ent+\mathcal G$ is $\partial_t\rho-\Delta\rho-\nabla\cdot(\rho\nabla V)=0$. If one defines $u=\rho e^{V}$ then $u$ satisfies the drift-diffusion PDE $\partial_t u=\Delta u -\nabla V\cdot\nabla u$. If we call $P_t$ the semigroup associated with this PDE, the celebrated Bakry-Emery estimates provide $|\nabla (P_t f)|\leq e^{-\lambda t}P_t(|\nabla f|)$ when $D^2V\geq \lambda I$. Taking $H$ convex and radially increasing, and using this inequality, for instance for $\lambda=0$, together with the convexity of the function $(s,y)\mapsto H(y/s)s$, one can prove
$$\int H\left(\frac{\nabla (P_t f)}{P_t f}\right)P_t \, de^{-V}\leq \int H\left(\frac{P_t(|\nabla f|)}{P_t f}\right)P_t f\, de^{-V}\leq \int H\left(\frac{|\nabla f|}{f}\right)f\, de^{-V},$$
which can be seen to be equivalent to the result of Proposition \ref{decrHZ} since
$$\frac{\nabla (P_t f)}{P_t f}=\frac{\nabla u}{u}=\frac{(\nabla\rho+\rho\nabla V)e^V}{\rho e^V}=Z_\rho.$$
\end{remark}

As a consequence, we obtain the following information on the JKO scheme
\begin{proposition}\label{prop5.3}
If $V$ is $\lambda$-convex and Lipschitz, the JKO scheme for the Fokker-Plack equation preserves the following bounds
\begin{itemize}
\item if $\lambda\geq 0$, if $\log\rho_0$ is Lipschitz continuous, then $\log\rho^\tau_n$ is also Lipschitz continuous, with bounded Lipschitz constant, and $\Lip(\log\rho^\tau_n+V)$ decreases in time (in $n$);
\item if  $\rho_0^{1/p}\in W^{1,p}(\Omega)$, then $(\rho^\tau_n)^{1/p}$ is bounded in $W^{1,p}(\Omega)$ independently of $\tau$ and $k$;
\item if $\rho_0\in BV(\Omega)$, then $\rho^\tau_n$ is bounded in $BV(\Omega)$ independently of $\tau$ and $n$;
\item if $\rho_0\in W^{1,1}(\Omega)$, then all the densities $\rho^\tau_n$ belong to a weakly-compact subset of $W^{1,1}(\Omega)$.
\end{itemize}
Moreover, if $\lambda>0$ then the gradients of the functions $(\rho^\tau_n e^V)^{1/p}$ converge exponentially fast to $0$ in $L^{p}(e^{-V})$, uniformly with respect to $\tau$, and hence the functions $(\rho^\tau_n e^V)^{1/p}$ converge in $W^{1,p}(e^{-V})$ to a constant.
\end{proposition}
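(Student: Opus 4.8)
The plan is to obtain every item from Proposition~\ref{decrHZ}, applied with a well-chosen radial convex $H$, together with the regularity of Proposition~\ref{F''}: since $V$ is Lipschitz, every $\rho^\tau_n$ with $n\ge1$ is Lipschitz and bounded below by a positive constant, so $Z_{\rho^\tau_n}=\nabla\rho^\tau_n/\rho^\tau_n+\nabla V$ is well-defined a.e.\ and the optimality condition $Z_{\rho^\tau_n}+\nabla\varphi/\tau=0$ underlying Proposition~\ref{decrHZ} holds. Everything then reduces to controlling $a_n:=\int H(Z_{\rho^\tau_n})\,d\rho^\tau_n$ along the scheme and translating it into the desired norms via the identity $Z_\rho=\nabla\log(\rho e^V)$. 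In particular, for $H(z)=|z|^p$,
$$J_{(p)}(\rho):=\int|Z_\rho|^p\,d\rho=p^p\int_\Omega\bigl|\nabla\bigl((\rho e^V)^{1/p}\bigr)\bigr|^p\,e^{-V}\,dx ,$$
and, using $|\nabla\log\rho|\le|Z_\rho|+\Lip(V)$ and $\int d\rho=1$, one gets a two-sided comparison (up to constants depending only on $p,d,\Lip(V)$) between $\int_\Omega|\nabla(\rho^{1/p})|^p\,dx$ and $J_{(p)}(\rho)$; similarly $|D\rho|(\Omega)\le\int|Z_\rho|\,d\rho+\Lip(V)$ with $H(z)=|z|$.

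For the first item ($\lambda\ge0$) take $H(z)=|z|^q$: Proposition~\ref{decrHZ} gives $\int|Z_{\rho^\tau_{n+1}}|^q\,d\rho^\tau_{n+1}\le\int|Z_{\rho^\tau_n}|^q\,d\rho^\tau_n$, and — since each $\rho^\tau_n$ is bounded above and below, so that $L^q(\rho^\tau_n\,dx)$-norms converge to the $L^\infty(dx)$-norm — letting $q\to\infty$ yields $\esssup_\Omega|Z_{\rho^\tau_{n+1}}|\le\esssup_\Omega|Z_{\rho^\tau_n}|$; on the convex set $\Omega$ this esssup equals $\Lip(\log\rho^\tau_n+V)$ (finite for every $n$ by Proposition~\ref{F''}), whence the claimed monotonicity, while $\Lip(\log\rho^\tau_n)\le\Lip(\log\rho_0+V)+\Lip(V)$ gives the uniform Lipschitz bound. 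For the $W^{1,p}$ and $BV$ items one keeps the exponent $p$ fixed (taking $H(z)=|z|$ for $BV$, still admissible in Proposition~\ref{decrHZ}): it shows $a_n$ is non-increasing when $\lambda\ge0$, while for $\lambda<0$ it gives $(1-|\lambda|C\tau)\,a_{n+1}\le a_n+C\tau$ with $C$ the growth constant of $H$ ($C=p$, resp.\ $C=1$), so a discrete Gr\"onwall estimate bounds $a_n$ by $C_T(a_0+1)$ for $n\tau\le T$, uniformly in $\tau\le\tau_0$. Since $a_0<\infty$ whenever $\rho_0^{1/p}\in W^{1,p}$ (resp.\ $\rho_0\in BV$), the comparisons above give the asserted $W^{1,p}$- and $BV$-bounds, uniform in $\tau$ and (for $\lambda\ge0$) in $n$.

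For the $W^{1,1}$ item, apply the de la Vall\'ee-Poussin criterion to the $\rho_0\,dx$-integrable map $Z_{\rho_0}$ to get a convex, increasing, superlinear $h$ with $h(0)=0$ and $\int h(|Z_{\rho_0}|)\,d\rho_0<\infty$, which one may moreover take doubling (so $\nabla H(z)\cdot z\le C(H(z)+1)$ for $H(z)=h(|z|)$, covering $\lambda<0$); the argument above keeps $a_n=\int h(|Z_{\rho^\tau_n}|)\,d\rho^\tau_n$ bounded. Since $h$ is superlinear and $\{\rho^\tau_n\}$ is equi-integrable (by the $BV$-bound just obtained and Sobolev embedding), splitting $\int_E|\nabla\rho^\tau_n|\,dx\le\int_E\rho^\tau_n(|Z_{\rho^\tau_n}|+\Lip(V))\,dx$ over $\{|Z_{\rho^\tau_n}|\le R\}$ and its complement shows $\{\nabla\rho^\tau_n\}$ equi-integrable; with its $L^1$-boundedness, Dunford-Pettis gives the weak $W^{1,1}$-compactness. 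For the ``Moreover'' part ($\lambda>0$), take $H(z)=|z|^p$: Proposition~\ref{decrHZ} gives $J_{(p)}(\rho^\tau_{n+1})\le(1+\lambda\tau)^{-1}J_{(p)}(\rho^\tau_n)$, hence $J_{(p)}(\rho^\tau_n)\le(1+\lambda\tau)^{-n}J_{(p)}(\rho_0)\le e^{-\lambda n\tau/2}J_{(p)}(\rho_0)$ for $\tau$ small, which by the displayed identity is precisely the exponential decay of $\|\nabla((\rho^\tau_n e^V)^{1/p})\|_{L^p(e^{-V})}$, uniformly in $\tau$. Since $\int(\rho^\tau_n e^V)\,e^{-V}\,dx=1$ forces $\|(\rho^\tau_n e^V)^{1/p}\|_{L^p(e^{-V})}=1$, the weighted Poincar\'e inequality on the bounded convex $\Omega$ (legitimate as $V$ is bounded) gives $\|(\rho^\tau_n e^V)^{1/p}-c_n\|_{L^p(e^{-V})}\to0$ for $c_n$ the $e^{-V}$-mean, and passing to the limit forces $c_n\to\bigl(\int_\Omega e^{-V}\,dx\bigr)^{-1/p}$; together with the vanishing gradients, this is the stated $W^{1,p}(e^{-V})$-convergence to a constant.

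The points needing most care are: the limit $q\to\infty$ in the first item, which genuinely uses the two-sided bound on $\rho^\tau_n$ from Proposition~\ref{F''} and not merely Lipschitz regularity; the choice in the $W^{1,1}$ item of an $H$ that is simultaneously superlinear and doubling (a standard refinement of de la Vall\'ee-Poussin); the legitimacy of the very first step of the scheme when $\rho_0$ is only $BV$, so that the five-gradients inequality — stated for $W^{1,1}$ densities — still applies to the pair $(\rho^\tau_1,\rho_0)$, which is handled by approximating $\rho_0$ by smooth densities and passing to the limit; and, in the ``Moreover'' part, identifying the limit constant rather than merely extracting a convergent subsequence.
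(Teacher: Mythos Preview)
Your proof is correct and follows essentially the same strategy as the paper's, reducing each item to Proposition~\ref{decrHZ} with an appropriate radial convex $H$ and the regularity from Proposition~\ref{F''}. The one noteworthy difference is in the first item: where the paper takes $H$ to be the convex indicator of $\overline{B(0,L)}$ (so that $\int H(Z_\eta)\,d\eta=0$ immediately forces $\int H(Z_\rho)\,d\rho=0$ and hence $|Z_\rho|\le L$ a.e.), you instead use $H(z)=|z|^q$ and send $q\to\infty$, relying on the two-sided bounds on $\rho^\tau_n$ from Proposition~\ref{F''} to identify the limit with the genuine $L^\infty$ norm --- a route the paper itself explicitly acknowledges as valid in the Remark following the proposition. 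Your handling of the $W^{1,1}$ item is also somewhat more explicit than the paper's one-line sketch, in particular by first invoking the $BV$ bound (via Sobolev embedding) to obtain equi-integrability of the densities $\rho^\tau_n$ themselves before splitting $\int_E|\nabla\rho^\tau_n|$ over level sets of $|Z_{\rho^\tau_n}|$; this fills in a step the paper leaves implicit.
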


\begin{proof}
In the case $\lambda\geq 0$, for the first part of the statement, take a measure $\eta$ and  $ \rho \in  Prox^{\tau}_{ \Ent+ \mathcal{G}} ( \eta)$. Let us suppose $\Lip(\log\eta+V)\leq L$ and use as a function $H$ the convex indicator function of $\overline{B(0,L)}$. From $\int H(Z_\eta)d\eta=0$ we deduce that we also have $\int H(Z_\rho)d\rho=0$. This means $|\nabla (\log\rho+V)|\leq L$ a.e. on $\{\rho>0\}$. Yet, we know from the optimality conditions that $\rho$ is a continuous density which is bounded away from $0$ since $\log\rho=C-V-\varphi/\tau$, hence we get $\Lip(\log\rho+V)\leq L$. This can be iterated along the JKO scheme thus obtaining the first part ot the statement.

For the second part of the statement, we use $H(z)=|z|^p$ and
$$||\rho^{1/p}||^p_{W^{1,p}}=c(p)\int \left(\rho^{1/p-1}|\nabla\rho|\right)^p+\int \rho\leq c(p)\int H(Z_\rho)d\rho+C,$$
where we used the boundedness of $\nabla V$. Since $\int H(Z_{\rho^\tau_n})d\rho^\tau_n$ decreases with $n$ (if $\lambda\geq 0$) or at least its growth is exponentially controlled (if $\lambda<0$), then $(\rho^\tau_n)^{1/p}$ is bounded in $W^{1/p}$.

The third part of the statement is proven in a similar way, using $p=1$. Indeed, given  $\eta\in BV(\Omega)$ and  $ \rho \in  Prox^{\tau}_{ \Ent+ \mathcal{G}} ( \eta)$, we can approximate $\eta$ with smoother densities $\eta_j$ with $||\nabla\eta_j+\eta_j\nabla V||\to ||\nabla\eta+\eta\nabla V||$ (the norm being taken in the space of vector measures). For each $j$ we have a measure $\rho_j \in Prox^{\tau}_{ \Ent+ \mathcal{G}} ( \eta_j)$, which is Lipschitz continuous and satisfies 
$$||\nabla\rho_j+\rho_j\nabla V||=\int \left|\frac{\nabla \rho_j}{\rho_j}+\nabla V\right|d\rho_j\leq ||\nabla\eta_j+\eta_j\nabla V||,$$ 
and, passing to the limit in $j$, we get
$$ ||\nabla\rho+\rho\nabla V||\leq ||\nabla\eta+\eta\nabla V||.$$
This proves that $ ||\nabla\rho^\tau_n+\rho^\tau_n\nabla V||$ is decreasing, and hence $||\nabla\rho^\tau_n||$ stays bounded.

In what concerns the $W^{1,1}$ estimate, we use a convex and superlinear function $H$ such that $\int H(Z_{\rho_0})d\rho_0<\infty$ (which exists since $\nabla \rho_0\in L^1$ implies $\nabla \rho_0/\rho_0+\nabla V\in L^1(\rho_0)$ and we know that $L^1$ functions are also integrable when composed with a suitable superlinear function, which can be taken convex). The results of Proposition \ref{decrHZ} allow then to keep the same integrability of the gradient along the iterations of the JKO scheme: this is easy if $\lambda\geq 0$, while for $\lambda<0$ we just need to note that $H$ can be taken superlinear but with polynomial growth (actually, its growth can be taken as close to linear as we want), and hence we can apply the last claim in Proposition \ref{decrHZ}. This guarantees equi-integrability for $\nabla\rho^\tau_n$ and hence the claim. 

We are now left to consider the behavior for $n\to\infty$ in the case $\lambda>0$. In this case we have exponential decay of the quantity
$J_{(p)}(\rho^\tau_n):=\int H(Z_{\rho^\tau_n})d\rho^\tau_n$ for $H(z)=|z|^p$. We can then observe that we have
$$\int \left|\frac{\nabla \rho}{\rho}+\nabla V\right|^pd\rho=\int \left|\nabla\log (\rho e^V) \right|^pe^{-V}d(\rho e^V) = c \int \left|\nabla\left((\rho e^V)^{1/p}\right)\right|^p d(e^{-V}).$$
This last result provides a sort of rate of convergence to the steady state of the Fokker-Planck equation $\rho=e^{-V}$. For the $W^{1,p}$ convergence we only have to use an appropriate local Sobolev inequality and exploit uniform integrability. \end{proof}

\begin{remark} In seeking quantitative convergence for the JKO in $W^{1,p}$, one has to exploit also the geodesic convexity of $\mathcal{F}_U : \rho \mapsto \int  U\left( \frac {\rho} {e^{-V}} \right) e^{-V} \, dx $ whenever $\lambda>0$ and $U$ satisfies appropriate generalization of the McCann condition; however this goes beyond the scope of this work.

It is however easier to treat the time continuous case because, whenever $U$ is convex, $\mathcal{F}_U$ is actually decreasing along the evolution (see \cite{IacPatSan} for a similar computation)
\end{remark}

\begin{remark}
The bound on the Lipschit constant could have been obtained as a limit on the $L^p$ norms for $p\to\infty$. Indeed, for $\lambda\geq 0$ we can also easily obtain
$$\int |Z_\rho|^pd\rho\leq \int |Z_\eta|^pd\eta$$
which, raising to power $1/p$ and sending $p\to \infty$ also gives a bound on the $L^\infty$ norm of $Z_\rho$, and hence on the Lipschitz constant of $\log\rho+V$. On the other hand, the approach with $H(z)=|z|^p$ is interesting for $\lambda<0$, as it provides
$$\int |Z_\rho|^pd\rho\leq (1-|\lambda|p\tau)^{-1}\int |Z_\eta|^pd\eta.$$
This estimate provides exponential bounds on $\int |Z_{\rho_t}|^pd\rho_t$, i.e. $\int |Z_{\rho_t}|^pd\rho_t\leq e^{|\lambda|pt}\int |Z_{\rho_0}|^pd\rho_0$. By taking the power $1/p$ and the limit as $p\to\infty$, one gets $||Z_{\rho_t}||_{\infty}\leq e^{|\lambda|t}||Z_{\rho_0}||_{\infty}$. Yet, this last computation can only be performed in continous time. More precisely, we first need to send $\tau\to 0$ and then $p\to \infty$. Indeed, if we first send $p\to\infty$ while $\tau>0$ is fixed, we would get $1-|\lambda|p<0$ which prevents any interesting estimate to be obtained. 
\end{remark}

In the next remark we use the following notation: when a vector $z$ and an exponent $\alpha>0$ are given, by $z^\alpha$ we mean $|z|^{\alpha-1}z$ (if $z\neq 0$, and $0$ if $z=0$), i.e. a vecotr whose norm is $|v|^\alpha$ and the direction is the same as that of $v$.

\begin{remark} \label{rmk:cascade}
The estimate with $H(z)=|z|^p$ when $V$ is $\lambda$-convex with $\lambda<0$ can also be concluded in a different way when $p<2$. Indeed, we can we can use
$$ \int_{\Omega} ( \nabla V-\nabla V\circ T ) \cdot \left( \frac{ x- T(x) }{\tau} \right)^{p-1} \, d \rho \geq  \frac \lambda{ \tau^{p-1}} \int_{\Omega} | x-T(x)|^p \, d \eta \geq \lambda \frac {W_2(\eta, \rho)^{p}} {{\tau}^{p-1}},$$
where in the last passage we use $p<2$. In particular if $\rho_n$ is the sequence generated by the JKO scheme we have, by induction,
   $$ J_{(p)}(\rho_{n+1})\leq J_{(p)}(\rho_n) +  p |\lambda| \sum_{i=k}^{n} \tau \left( \frac {W_2(\rho_i, \rho_{i+1})}{\tau} \right)^p.$$
Now we can use the Holder inequality and, together with $W_2^2(\rho_n,\rho_{n+1})\leq 2\tau (\mathcal F(\rho_n)-\mathcal F(\rho_{n+1}))$, we obtain
\begin{align*}J_{(p)}(\rho_{n+1} ) & \leq J_{(p)}(\rho_n) +  p|\lambda| \bigl((n-k)\tau\bigr)^{1-p/2}\left( \sum_{i=k}^{n} \tau \left( \frac {W_2(\rho_i, \rho_{i+1})}{\tau} \right)^2 \right)^{p/2} \\
& \leq J_{(p)}(\rho_n) + p |\lambda |(t-s)^{1-p/2} \left( 2\mathcal{F} (\rho_{n+1}) - 2\mathcal{F}(\rho_n) \right)^{p/2}
\end{align*}
Hence, we deduce that  $J_{(p)}(\rho_{t})$ is locally bounded in time, and grows sublinearly as $t\to\infty$.
\end{remark}

We also want to consider the case where $V$ depends on $\rho$ via a smooth convolution kernel. This is typical in aggregation equations.
We consider the following case
\begin{equation*}\label{Ginter}
\mathcal G(\rho):=\int Vd\rho+\frac 12 \int\int W(x-y)d\rho(x)d\rho(y)
\end{equation*}
for an interaction potential $W:\R^d\to\R$ which is supposed to be even ($W(z)=W(-z)$) and $C^{1,1}$. This last assumption is very demanding and non-optimal, but allows for a simple presentation of the estimates. The Keller-Segel case that we will see later is in some sense obtained from a singular interaction potential ($W(z)=\log|z|$ in dimension $d=2$, when in the whole space),and will be treated in details, but in a different way.
We will set
$$J_{(p)}(\rho):=\int \left|\frac{\nabla\rho}{\rho}+\nabla V+\nabla W*\rho\right|^pd\rho,$$
i.e. $J_{(p)}(\rho)=\int H(Z_\rho)d\rho$, where $u[\rho]=V+W*\rho$. For simplicity, in the notation $J_{(p)}$, we are omitting the dependence on $V$ and $W$.

\begin{proposition}\label{decrHZ} Let us consider $\mathcal{G}$ as in \eqref{Ginter}, and $\rho \in Prox_{ \Ent + \mathcal{G}}^{\tau} (\eta)$. Suppose that $\Omega$ is convex, that $V$ is semi-convex, i.e. $D^2V\geq \lambda I$, and that $W$ is $C^{1,1}$, with $\Lip(\nabla W)=\mu$. Then, we have 
$$(1+p(\lambda-  2 \mu)\tau)J_{(p)}(\rho)\leq J_{(p)}(\eta).$$
\end{proposition}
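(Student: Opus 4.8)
## Proof Proposal

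The plan is to apply Lemma \ref{generalH} with $H(z)=|z|^p$ and $u[\rho]=V+W*\rho$, and then to estimate the error term
$$
\int \nabla H\!\left(\frac{\nabla\varphi}{\tau}\right)\cdot\left(\nabla u[\rho]-\nabla u[\eta]\circ T\right)d\rho
$$
from below by $-p(\lambda-2\mu)\tau\,J_{(p)}(\rho)$. Writing $u[\rho]-u[\eta]\circ T$ as $(V-V\circ T)+(W*\rho-(W*\eta)\circ T)$, I would split the integrand into the potential part and the interaction part. The potential part is handled exactly as in the previous Proposition \ref{decrHZ} (the $\lambda$-convexity of $V$ gives $(\nabla V(x)-\nabla V(T(x)))\cdot\nabla\varphi(x)\ge\lambda|\nabla\varphi(x)|^2$), and together with the identity $|\nabla H(z)|=H(z)/|z|$ valid for $H(z)=|z|^p$ (radial, $H(0)=0$), this contributes $\ge\lambda\tau\,p\int|\nabla\varphi/\tau|^p\,d\rho = p\lambda\tau\,J_{(p)}(\rho)$, using the optimality condition $Z_\rho=-\nabla\varphi/\tau$ so that $\int|\nabla\varphi/\tau|^p d\rho = J_{(p)}(\rho)$.

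For the interaction part, I would write, pointwise in $x$,
$$
\nabla(W*\rho)(x)-\nabla(W*\eta)(T(x)) = \int\big(\nabla W(x-y)-\nabla W(T(x)-T(y))\big)\,d\rho(y),
$$
using $\eta=T_\#\rho$ to pull back the second convolution. Since $\nabla W$ is $\mu$-Lipschitz, $|\nabla W(x-y)-\nabla W(T(x)-T(y))|\le\mu\,|(x-y)-(T(x)-T(y))|\le\mu(|x-T(x)|+|y-T(y)|) = \mu(|\nabla\varphi(x)|+|\nabla\varphi(y)|)$. Then I bound
$$
\left|\int\!\!\int \nabla H\!\left(\tfrac{\nabla\varphi(x)}{\tau}\right)\cdot\big(\cdots\big)\,d\rho(y)d\rho(x)\right|
\le \mu\int\!\!\int \left|\nabla H\!\left(\tfrac{\nabla\varphi(x)}{\tau}\right)\right|\big(|\nabla\varphi(x)|+|\nabla\varphi(y)|\big)\,d\rho(y)d\rho(x).
$$
Using $|\nabla H(\nabla\varphi/\tau)| = p|\nabla\varphi/\tau|^{p-1}$, the $|\nabla\varphi(x)|$ term gives $p\tau\int|\nabla\varphi/\tau|^p d\rho = p\tau J_{(p)}(\rho)$. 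For the $|\nabla\varphi(y)|$ cross term, I would apply Young's inequality $a^{p-1}b\le\frac{p-1}{p}a^p+\frac1p b^p$ with $a=|\nabla\varphi(x)/\tau|$, $b=|\nabla\varphi(y)/\tau|$ and integrate; since $\rho$ is a probability measure both resulting integrals equal $J_{(p)}(\rho)\tau^{\,?}$ appropriately normalized, yielding another $p\tau J_{(p)}(\rho)$. Collecting, the interaction error is bounded below by $-2\mu\, p\tau\,J_{(p)}(\rho)$, and adding the two contributions and rearranging gives $(1+p(\lambda-2\mu)\tau)J_{(p)}(\rho)\le J_{(p)}(\eta)$.

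The main obstacle I anticipate is the bookkeeping in the cross term: making sure the symmetrization via Young's inequality is done consistently (one must be slightly careful that $H$ is radial so that $\nabla H(\nabla\varphi/\tau)\cdot\nabla\varphi \ge 0$ and one is genuinely bounding $|\nabla H|\cdot|\nabla\varphi|$), and checking that the constant coming out is exactly $2\mu$ rather than something worse. A secondary technical point is the regularity needed to justify Lemma \ref{generalH} and the integration: one should note that $\rho$ is Lipschitz and bounded below (from the optimality condition $\log\rho = c - u[\rho] - \varphi/\tau$ with $u[\rho]$ Lipschitz since $W\in C^{1,1}$ and $V$ Lipschitz — or at least handle the case where these are only semi-convex by the usual approximation), so that all the integrals and the five-gradients inequality are legitimate; this is entirely parallel to Proposition \ref{F''} and does not require new ideas.
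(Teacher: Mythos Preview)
Your argument is correct and reaches the same constant $2\mu$ as the paper, but through a different decomposition of the interaction term. The paper inserts $\nabla(W*\eta)(x)$ and splits $(\nabla W*\rho)(x)-(\nabla W*\eta)(T(x))$ into two pieces: the first, $\nabla(W*\eta)(x)-\nabla(W*\eta)(T(x))$, is treated exactly like the potential term via the $(-\mu)$-semiconvexity of $W*\eta$, contributing $\geq -\mu\tau J_{(p)}(\rho)$ (after the $p$ from $\nabla H$); the second, $\nabla(W*\rho)(x)-\nabla(W*\eta)(x)$, is bounded uniformly by $\mu\,W_1(\rho,\eta)\leq \mu\tau J_{(p)}(\rho)^{1/p}$ and then combined with $\int|\nabla\varphi/\tau|^{p-1}d\rho\leq J_{(p)}(\rho)^{(p-1)/p}$ via H\"older. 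Your doubling-of-variables route---writing the whole thing as $\int\!\int(\nabla W(x-y)-\nabla W(T(x)-T(y)))\,d\rho(y)d\rho(x)$ and using $|\nabla W(a)-\nabla W(b)|\leq\mu|a-b|$ directly---is more symmetric and bypasses the $W_1$ detour; the cross term is then handled by Young (or, equivalently, by factoring the double integral and applying H\"older to each factor). The paper's decomposition has the mild advantage of isolating a piece with a genuine sign when $W$ is convex, whereas yours is slightly cleaner when one only has the two-sided bound $\Lip(\nabla W)=\mu$. One small slip in your write-up: for $H(z)=|z|^p$ one has $|\nabla H(z)|=p\,H(z)/|z|$, not $H(z)/|z|$; the factor $p$ in your final bound comes precisely from $\nabla H$, so the computation is right even though the stated identity is off by that factor.
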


\begin{proof}
The starting point is, of course, the result of Lemma \ref{generalH} applied to $H(z)=|z|^p$. This gives
\begin{eqnarray*}
J_{(p)}(\eta)&\leq& J_{(p)}(\rho)\\&&+p\!\int \!\left(\frac{x-T(x)}{\tau}\right)^{p-1}\!\!\cdot \left(\nabla V(x)\!-\!\nabla V(T(x))+(\nabla W\!*\!\rho)(x)\!-\!(\nabla W\!*\!\eta)(T(x))\right)d\rho(x),
\end{eqnarray*}
where by $v^{p-1}$, when $v$ is a vector (here $v=(x-T(x))/\tau$), we mean $|v|^{p-2}v$.

Using the same argument as in Remark \ref{rmk:cascade} we can obtain
$$\int \left(\frac{x-T(x)}{\tau}\right)^{p-1}\cdot \left(\nabla V(x)-\nabla V(T(x))\right)d\rho(x)\geq \tau\lambda \int\left|\frac{x-T(x)}{\tau}\right|^{p}d\rho(x)=\tau \lambda J_{(p)}(\rho),$$
as well as 
$$\int \left(\frac{x-T(x)}{\tau}\right)^{p-1}\cdot \left((\nabla W*\eta)(x)-(\nabla W*\eta) (T(x))\right)d\rho(x)\geq -\tau\mu J_{(p)}(\rho),$$
since the function $W*\eta$ is $(-\mu)$-convex.

We are left to estimate the remaining term 
$$\int \left(\frac{x-T(x)}{\tau}\right)^{p-1}\cdot \left((\nabla W*\eta)(x)-(\nabla W*\rho) (x)\right)d\rho(x).$$
This term will be bounded in absolute value, and we first note that we have
$$|(\nabla W*\rho)(x)-(\nabla W*\eta)(x)|=\left|\int \nabla W(x-y)d(\rho-\eta)(y)\right|\leq \mu W_1(\rho,\eta),$$
since $y\mapsto  \nabla W(x-y)$ is $\mu$-Lipschitz for every $x$. We also use
$$W_1(\rho,\eta)\leq\int |x-T(x)|d\rho=\tau \int \frac{|x-T(x)|}{\tau}d\rho\leq \tau\left( \int \left|\frac{x-T(x)}{\tau}\right|^pd\rho\right)^{1/p}=\tau J_{(p)}(\rho)^{1/p}.$$
Then we have

\begin{eqnarray*}
\left|\int \left(\frac{x-T(x)}{\tau}\right)^{p-1}\!\!\cdot \left((\nabla W*(\eta-\rho)) (x)\right)d\rho(x)\right|
&\leq& \mu\tau J_{(p)}(\rho)^{1/p}\int \left|\frac{x-T(x)}{\tau}\right|^{p-1}d\rho(x)\\&\leq& \mu\tau J_{(p)}(\rho)^{1/p}J_{(p)}(\rho)^{(p-1)/p}=\mu\tau J_{(p)}(\rho).
\end{eqnarray*}
Putting all the results together provides the claim.
\end{proof}

Let us note that in the above result, in order to obtain an estimate which could be iterated, we needed to replace $\nabla W*\eta$ with $\nabla W*\rho$, and hence we used the Lipschitz behavior of $\nabla W$: for this estimate, lower bounds on $D^2W$ (as we required on $V$) were not enough. We also note that the same kind of exponential asymptotic behavior providing convergence to the steady state as in the last point of Proposition \ref{prop5.3} could be obtained, provided $\lambda>2\mu$, but these computations will not be detailed.

\subsection{ Keller-Segel case}

We come back to the case $\mathcal{G}(\rho)=- \frac \chi 2 \int h[\rho] \, d \rho $, where $-\Delta h[\rho]= \rho$ in $\Omega$ with Dirichlet boundary conditions. In this case we have that the first variation of $\mathcal{G}$ is $u[\rho]=- \chi h[\rho]$. The main goal of this section will be to have an estimate of 

\begin{equation}\label{JpKS} J_{(p)}(\rho) = \int_{\Omega} \left| \frac { \nabla \rho}{\rho}  -\chi \nabla h[\rho] \right|^p \, d\rho. 
\end{equation}

As we will see, in order to deal some of the error terms, we will need to estimate $J_{(p)}$ with $W_2(\rho,\eta)$, and use $p<2$. Moreover, we will also need an apriori bound on the $L^r$ norm of $\rho$ and $\eta$, for an exponent $r$ depending on $p$. All these restrictions mainly due to the fact that $-h[\rho]$ does not necessarily satisfies the semiconvexity assumptions that we usually used to handle remainder terms.

Thanks to Lemma \ref{generalH} we have 

\begin{align}\label{1stestJpKS}J_{(p)}(\rho) \leq J_{(p)}(\eta) &+ p \chi \int_{\Omega} ( \nabla h[\rho]  -  \nabla h[\eta] \circ T ) \cdot \left( \frac{ x- T(x) }{\tau} \right)^{p-1} \, d\rho \notag  \\
 = J_{(p)}(\eta)  &+ p \chi \int_{\Omega} (\nabla h[\rho] - \nabla h[\eta] ) \cdot \left( \frac{ x- T(x) }{\tau} \right)^{p-1} \, d\rho \notag \\&- p \chi   \int_{\Omega} (\nabla h[\eta] \circ T- \nabla h[\eta] ) \cdot \left( \frac{ x- T(x) }{\tau} \right)^{p-1} \, d\rho 
\end{align}

In order to treat the two remainder terms, we state a general comparison result between Sobolev dual norms and Wasserstein distances (Exercise 35 in \cite{OTAM}).

\begin{proposition}\label{prop:sobone} Let $\rho, \eta \in \mathscr{P}_2(\Omega)$ be two absolutely continuous measures. Then, supposing that $\| \rho \|_r, \|\eta\|_r \leq C$ with $ \frac 1q + \frac 1r +\frac 1p = 1 + \frac 1rq$, for every  $\varphi \in C^{1}(\Omega)$ we have
$$ \int \varphi\,d (\rho- \eta) \leq \| \nabla \varphi \|_p \cdot C^{1/q'} \cdot W_q ( \eta, \rho). $$
In particular we have $\| \rho- \eta\|_{(W^{1,p})^*(\Omega)} \leq \sqrt{ \max\{ \| \rho\|_r , \| \eta \|_r\}} W_2( \eta, \rho)$ for $r=p/(p-2)$.
\end{proposition}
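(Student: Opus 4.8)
The plan is to rewrite the pairing $\int\varphi\,d(\rho-\eta)$ as an integral of $\nabla\varphi$ along the transport rays of an optimal coupling, and then to distribute it, via Hölder's inequality, between a factor governed by $\|\nabla\varphi\|_{L^p}$ and a factor governed by $W_q(\eta,\rho)$; the bridge between the two will be an $L^r$ bound on the displacement interpolant, obtained from McCann's displacement convexity.

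Concretely, I would first pick an optimal transport plan $\gamma$ for the cost $|x-y|^q$ from $\eta$ to $\rho$. Since $\eta$ is absolutely continuous and $q>1$, $\gamma=(\mathrm{id},T)_\#\eta$ is induced by a map $T$ with $T_\#\eta=\rho$. Writing $T_t:=(1-t)\,\mathrm{id}+tT$ and $\sigma_t:=(T_t)_\#\eta$ for the constant-speed interpolation (so $\sigma_0=\eta$, $\sigma_1=\rho$), I would use convexity of $\Omega$ — so that each segment $[x,T(x)]$ stays in $\Omega$ — together with the fundamental theorem of calculus along these segments, to obtain
\[
\int\varphi\,d(\rho-\eta)=\int_\Omega\big(\varphi(T(x))-\varphi(x)\big)\,d\eta(x)=\int_0^1\!\!\int_\Omega\nabla\varphi\big(T_t(x)\big)\cdot\big(T(x)-x\big)\,d\eta(x)\,dt .
\]
Then, for each fixed $t$, Hölder's inequality in $d\eta$ with the conjugate exponents $q'$ and $q$ peels off $\big(\int_\Omega|T(x)-x|^q\,d\eta\big)^{1/q}=W_q(\eta,\rho)$, by optimality of $\gamma$, leaving the term $\big(\int_\Omega|\nabla\varphi|^{q'}\,d\sigma_t\big)^{1/q'}$.

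The second part of the plan is to control this last term. I would rewrite the exponent hypothesis $\tfrac1q+\tfrac1r+\tfrac1p=1+\tfrac1{rq}$ in the equivalent form $p=q'r'$ — equivalently $\tfrac{q'}{p}+\tfrac1r=1$ — and apply Hölder once more, now in space, to get $\int_\Omega|\nabla\varphi|^{q'}\,d\sigma_t\le\|\nabla\varphi\|_{L^p}^{q'}\,\|\sigma_t\|_{L^r}$. Since $s\mapsto s^r$ satisfies the $d$-McCann condition, the functional $\mu\mapsto\int\mu^r$ is displacement convex, so its value along $(\sigma_t)$ does not exceed the convex combination of its endpoint values; hence $\|\sigma_t\|_{L^r}\le\max\{\|\eta\|_{L^r},\|\rho\|_{L^r}\}\le C$. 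Substituting back and integrating in $t$ (the bound is independent of $t$) yields the claimed inequality. For the last assertion I would set $q=2$, which forces $q'=2$ and $r=p/(p-2)$, and take the supremum over $\varphi$ with $\|\varphi\|_{W^{1,p}}\le 1$, using $\|\nabla\varphi\|_{L^p}\le\|\varphi\|_{W^{1,p}}$.

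I expect the only non-routine point to be the $L^r$ bound on the interpolant $\sigma_t$: one must first know that $\sigma_t$ is absolutely continuous for $t<1$ (so that $\int\sigma_t^r$ is meaningful and the displacement-convexity argument applies), and then that $\mu\mapsto\int\mu^r$ remains displacement convex along the geodesic built from the $|x-y|^q$-optimal map, not merely along $W_2$-geodesics. For $q=2$ this is exactly McCann's theorem; for general $q$ it is the substance of the cited Exercise~35 in \cite{OTAM}. Everything else reduces to a careful bookkeeping of the Hölder exponents and to checking that the marginals of $\gamma$ (hence the direction of $W_q$ and the signs) are assigned consistently.
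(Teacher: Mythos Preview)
The paper gives no proof of this proposition; it simply cites Exercise~35 in \cite{OTAM}. Your argument is the standard one and the H\"older bookkeeping is correct: the exponent relation $\tfrac1q+\tfrac1r+\tfrac1p=1+\tfrac1{rq}$ is indeed equivalent to $p=q'r'$ (i.e.\ $\tfrac{q'}{p}+\tfrac1r=1$), so the second H\"older step with exponents $p/q'$ and $r$ is legitimate, and combining the two H\"older inequalities produces exactly $\|\nabla\varphi\|_p\,\|\sigma_t\|_r^{1/q'}\,W_q(\eta,\rho)$. For $q=2$ --- which is the only instance actually invoked later in the paper, in Lemma~\ref{lem:estimates} --- the interpolant $\sigma_t$ is the $W_2$-geodesic, McCann's displacement convexity applies directly, and your proof is complete. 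For general $q$ you are right to isolate the bound $\|\sigma_t\|_r\le C$ as the non-routine step: the $W_q$-optimal map is not the gradient of a convex function when $q\neq2$, so McCann's original argument does not apply verbatim, and establishing the $L^r$ control on the interpolant along the $W_q$-geodesic is precisely what has to be extracted from the cited exercise.
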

 The following lemma is very classical, and can be found, for instance, in \cite{Giusti}, Theorem~10.15 (where the case $p\geq 2$ is treated, for $p< 2$ one can argue then by duality) 
\begin{lemma}\label{lem:sobone2} Let $\Omega$ be a bounded convex domain, and $f \in{(W^{1,p}_0)^*(\Omega)}$ be given. Denoting by $h[f]$ the unique solution in $W^{1,p'}_0(\Omega)$ (with $p'=p/(p-1)$ the dual exponent to $p$) of $-\Delta h=f$, there exists a constant $C>0$, depending only on the dimension, on $p$ and possibly on $\Omega$, such that
$$  \| \nabla h[f] \|_{p'} \leq  C\| f \|_{(W^{1,p})^*(\Omega)} . $$
%The same estimate is true in the case where $\Omega$ is unbounded, if one imposes suitable integrability of $f$ and $u$ (the integrability on $u$ replaces the Dirichlet conditions on the unbounded parts of $\Omega$.
\end{lemma}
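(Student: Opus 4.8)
The plan is to recognise the statement as the classical global $W^{1,q}$-regularity of the Dirichlet Laplacian, and to bridge the two ranges of exponents by a duality argument. The claim is equivalent to the boundedness of the linear operator $f\mapsto\nabla h[f]$ from $(W^{1,p}_0(\Omega))^*$ into $L^{p'}(\Omega;\R^d)$. For $p\ge 2$ (so that $p'\le 2$) this is exactly Theorem~10.15 in \cite{Giusti}; since a bounded convex set is in particular a bounded Lipschitz set, the hypotheses of that theorem are met, so I would quote it directly. It then remains to treat $1<p<2$, i.e. $p'>2$, which I would deduce from the case $p\ge 2$ by the standard duality trick for linear elliptic equations.

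For that step I would first write $\|\nabla h[f]\|_{p'}=\sup\{\int_\Omega\nabla h[f]\cdot g\,\dd x\,:\,g\in L^{p}(\Omega;\R^d),\ \|g\|_{p}\le 1\}$, fix such a $g$, and introduce the auxiliary function $w$ solving $-\Delta w=-\nabla\cdot g$ with homogeneous Dirichlet condition, characterised weakly by $\int_\Omega\nabla w\cdot\nabla\zeta\,\dd x=\int_\Omega g\cdot\nabla\zeta\,\dd x$ for every $\zeta\in W^{1,p'}_0(\Omega)$. Since $-\nabla\cdot g$ has norm at most $\|g\|_p\le 1$ in $(W^{1,p'}_0(\Omega))^*$ and $p'>2$, the case of the statement already established (now applied at the exponent $p'$, which is $\ge 2$) provides $w\in W^{1,p}_0(\Omega)$ with $\|\nabla w\|_{p}\le C$.

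Next I would test the equation for $w$ with $\zeta=h[f]\in W^{1,p'}_0(\Omega)$, and the equation $-\Delta h[f]=f$ — whose weak form $\int_\Omega\nabla h[f]\cdot\nabla\zeta\,\dd x=\langle f,\zeta\rangle$ is valid for all $\zeta\in W^{1,p}_0(\Omega)$ — with $\zeta=w\in W^{1,p}_0(\Omega)$; using that $-\Delta$ is symmetric this gives
$$\int_\Omega g\cdot\nabla h[f]\,\dd x=\int_\Omega\nabla w\cdot\nabla h[f]\,\dd x=\langle f,w\rangle\le\|f\|_{(W^{1,p})^*(\Omega)}\,\|w\|_{W^{1,p}(\Omega)}\le C\,\|f\|_{(W^{1,p})^*(\Omega)},$$
where the last inequality uses $\|\nabla w\|_{p}\le C$ together with the Poincaré inequality. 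Taking the supremum over admissible $g$ closes the argument.

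The main point of care is not analytic depth — this is a textbook Calderón--Zygmund estimate — but two pieces of bookkeeping. First, the existence of $h[f]\in W^{1,p'}_0(\Omega)$ for a general $f\in(W^{1,p}_0(\Omega))^*$ with $p'>2$ is not immediate from Lax--Milgram, since such an $f$ need not belong to $H^{-1}$; I would handle this by solving first for smooth data $f_\ep\to f$, for which $h[f_\ep]\in H^1_0\cap W^{1,p'}_0$ and all the integrations by parts above are legitimate, and then passing to the limit by means of the a priori estimate just derived, which is uniform in $\ep$. Second, one must check at each step that the test function lies in the Sobolev space against which the corresponding weak formulation is paired — this is precisely where the hypothesis $p'>2$ is used when invoking the $p\ge 2$ case. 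The only genuine external input is the validity of the $W^{1,q}$-estimate on a merely convex (not smooth) domain, which is why I rely on \cite{Giusti} rather than reproving it.
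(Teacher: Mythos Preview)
Your proposal is correct and follows exactly the route the paper indicates: the paper does not write out a proof but merely states that the result ``is very classical, and can be found, for instance, in \cite{Giusti}, Theorem~10.15 (where the case $p\geq 2$ is treated, for $p< 2$ one can argue then by duality)''. You have supplied precisely this---quoting Giusti for $p\ge 2$ and carrying out the standard duality argument for $p<2$---with the added care of handling the existence of $h[f]$ by approximation, which the paper does not mention.
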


\begin{lemma}\label{lem:estimates} Given $p\in (1,2)$, set $r= \frac {4-p}{2-p}$. Let us assume $\rho, \eta \in \mathscr{P}(\Omega) \cap L^r(\Omega)$ with $\Omega$ convex and let us denote by $T$ the optimal map between $\rho$ and $\eta$. Then there exists a constant $C$, only depending on $\Omega$, $p$, and $d$, such that
  $$  \int_{\Omega} (\nabla h[\eta] - \nabla h[\eta] \circ T) \cdot \left( \frac{ x- T(x) }{\tau} \right)^{p-1} \, d \rho\leq C\tau \max\{ \| \rho\|_r^r , \| \eta \|_r^r \}  +  \frac{W_2^2(\eta, \rho)}{\tau}  $$
 $$\int_{\Omega} (\nabla h[\rho] - \nabla h[\eta] ) \cdot \left( \frac{ x- T(x) }{\tau} \right)^{p-1} \, d \rho \leq C\tau \max\{ \| \rho\|_r^r , \| \eta \|_r^r \}  +  \frac{W_2^2(\eta, \rho)}{\tau}  $$
\end{lemma}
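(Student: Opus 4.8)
The plan is to handle both error terms by the same recipe. In each case the integrand is a scalar product of a ``vector-field difference'' (built from $\nabla h[\eta]$ in the first estimate, from $\nabla h[\rho-\eta]$ in the second) with $\tau^{-(p-1)}(x-T(x))^{p-1}$; I bound the vector-field factor in a carefully chosen $L^{s}(\rho)$-norm using elliptic regularity together with the hypotheses $\rho,\eta\in L^{r}$, then pair it by Hölder's inequality with $\tau^{-(p-1)}|x-T(x)|^{p-1}$ so that the transport displacement appears squared and produces a power of $W_{2}(\rho,\eta)$, and finally split with Young's inequality into $W_{2}^{2}(\rho,\eta)/\tau$ plus $\tau$ times an $L^{r}$ quantity. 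The value $r=\tfrac{4-p}{2-p}$ is precisely the one that makes every exponent below fit: the Hölder conjugates, the integrability exponent in Proposition \ref{prop:sobone} (taken with $q=2$), the dual exponent in Lemma \ref{lem:sobone2}, and the powers produced by the concluding Young step. Throughout I write $M_{0}:=\max\{\|\rho\|_{L^{r}},\|\eta\|_{L^{r}}\}$, so that $M_{0}^{r}=\max\{\|\rho\|_{L^{r}}^{r},\|\eta\|_{L^{r}}^{r}\}$, and I freely use the Calder\'on--Zygmund bound $\|D^{2}h[\mu]\|_{L^{r}}\le C\|\mu\|_{L^{r}}$ on a convex domain, i.e.\ the same $W^{2,r}$-regularity already invoked for Proposition \ref{prop:smoothing}.

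For the first inequality, convexity of $\Omega$ keeps the segment $[x,T(x)]$ inside $\Omega$, so with $x_{t}:=(1-t)x+tT(x)$ one has
$$\nabla h[\eta](x)-\nabla h[\eta](T(x))=-\int_{0}^{1}D^{2}h[\eta](x_{t})\,(x-T(x))\,\dd t,$$
which is classical for smooth $\eta$ and passes to the limit approximating $\eta$ in $L^{r}$ by smooth densities. Hence the first error term is at most $\tau^{-(p-1)}\int_{0}^{1}\!\int_{\Omega}|D^{2}h[\eta](x_{t})|\,|x-T(x)|^{p}\,\dd\rho\,\dd t$. For fixed $t$, Hölder's inequality in $x$ with exponents $\bigl(\tfrac{2}{2-p},\tfrac{2}{p}\bigr)$ extracts $\bigl(\int|x-T(x)|^{2}\dd\rho\bigr)^{p/2}=W_{2}(\rho,\eta)^{p}$ and leaves $\bigl(\int|D^{2}h[\eta](x_{t})|^{2/(2-p)}\dd\rho\bigr)^{(2-p)/2}$; pushing $\rho$ forward by $x\mapsto x_{t}$ rewrites the latter as an integral against the displacement interpolant $\rho_{t}$, and a further Hölder step (using $\rho_{t}\in L^{r}$, conjugate exponent $\tfrac{4-p}{2}$) together with the elliptic bound gives the upper bound $C\|\eta\|_{L^{r}}\|\rho_{t}\|_{L^{r}}^{(2-p)/2}$. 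Since $s\mapsto s^{r}$ satisfies the $d$-McCann condition, displacement convexity of $\mu\mapsto\int\mu^{r}$ yields $\|\rho_{t}\|_{L^{r}}\le M_{0}$, and collecting powers the first error term is $\le C\,M_{0}^{(4-p)/2}\,W_{2}(\rho,\eta)^{p}/\tau^{p-1}$.

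For the second inequality, $\nabla h[\rho]-\nabla h[\eta]=\nabla h[\rho-\eta]$. Bounding $\int|\nabla h[\rho-\eta]|\,|x-T(x)|^{p-1}\,\dd\rho$ by Hölder in $x$ with exponents $\bigl(\tfrac{2}{3-p},\tfrac{2}{p-1}\bigr)$ extracts $W_{2}(\rho,\eta)^{p-1}$ and leaves $\bigl(\int|\nabla h[\rho-\eta]|^{2/(3-p)}\dd\rho\bigr)^{(3-p)/2}$, which by one Hölder step (using $\rho\in L^{r}$) is at most $\|\nabla h[\rho-\eta]\|_{L^{\sigma}}\,\|\rho\|_{L^{r}}^{(3-p)/2}$ with $\sigma=\tfrac{4-p}{3-p}$. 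The conjugate exponent of $\sigma$ is $4-p$, so Lemma \ref{lem:sobone2} gives $\|\nabla h[\rho-\eta]\|_{L^{\sigma}}\le C\|\rho-\eta\|_{(W^{1,4-p})^{*}}$, and Proposition \ref{prop:sobone} applied with Sobolev exponent $4-p$ and $q=2$ — the choice making its integrability exponent equal to $r=\tfrac{4-p}{2-p}$ — gives $\|\rho-\eta\|_{(W^{1,4-p})^{*}}\le M_{0}^{1/2}\,W_{2}(\rho,\eta)$. Collecting powers ($\tfrac12+\tfrac{3-p}{2}=\tfrac{4-p}{2}$), the second error term is again $\le C\,M_{0}^{(4-p)/2}\,W_{2}(\rho,\eta)^{p}/\tau^{p-1}$. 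In both cases one concludes by writing $W_{2}^{p}/\tau^{p-1}=(W_{2}^{2}/\tau)^{p/2}\,\tau^{1-p/2}$ and applying Young's inequality with exponents $\bigl(\tfrac{2}{p},\tfrac{2}{2-p}\bigr)$: the first term is $\tfrac{p}{2}\,W_{2}^{2}(\rho,\eta)/\tau\le W_{2}^{2}(\rho,\eta)/\tau$, while in the second the exponents of $\tau$ and of $M_{0}$ combine exactly to $1$ and $r$, i.e.\ to $C\tau\,M_{0}^{r}=C\tau\max\{\|\rho\|_{L^{r}}^{r},\|\eta\|_{L^{r}}^{r}\}$.

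The genuinely delicate points are not the exponent arithmetic — which is dictated by the value of $r$ — but rather: (i) legitimizing the pointwise fundamental-theorem-of-calculus representation of $\nabla h[\eta]$ and the accompanying smooth approximation of $\eta$, since $h[\eta]$ is only $W^{2,r}$ and one must in particular pass to the limit in the term involving $\nabla h[\eta]\circ T$; and (ii) the Calder\'on--Zygmund estimate $\|D^{2}h[\eta]\|_{L^{r}}\le C\|\eta\|_{L^{r}}$ on a merely convex domain, which is classical but not elementary. The use of displacement convexity of $\mu\mapsto\int\mu^{r}$ to keep $\|\rho_{t}\|_{L^{r}}$ under control along the interpolation is the one structural ingredient specific to the optimal-transport setting; everything else is Hölder and Young.
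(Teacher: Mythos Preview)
Your proof is correct and follows essentially the same route as the paper's: for the first inequality you use the fundamental theorem of calculus along the segment, H\"older with exponents $(\tfrac{2}{2-p},\tfrac{2}{p})$, the push-forward to the displacement interpolant $\rho_t$, displacement convexity of the $L^r$ norm, Calder\'on--Zygmund, and then Young; for the second you use H\"older with exponents $(\tfrac{2}{3-p},\tfrac{2}{p-1})$, a further H\"older against $\rho\in L^r$, Lemma~\ref{lem:sobone2}, Proposition~\ref{prop:sobone} with $q=2$, and Young --- exactly as in the paper, with the same exponent bookkeeping throughout. (One cosmetic slip: in your fundamental-theorem-of-calculus identity the sign should be $+$, not $-$, since $\frac{d}{dt}x_t=T(x)-x$; this is irrelevant for the bound.)
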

\begin{proof} We begin with the first inequality: if we set $T_t(x)=x+t (T(x)-x)$ then we know that $(T_t)_{\sharp} \rho := \rho_t \in \mathscr{P}(\Omega)$ is the Wasserstein geodesic from $\rho$ to $\eta$, and the displacement convexity properties of the $L^r$ norms imply that we have $\| \rho_t\|_r \leq \max\{ \| \eta\|_r , \| \rho \|_r\}=:M$. We have

\begin{multline*} \int_{\Omega} (\nabla h[\eta] - \nabla h[\eta] \circ T) \cdot \left( \frac{ x- T(x) }{\tau} \right)^{p-1} \, d \rho \\
= \int_{\Omega} \int_0^1  ( x-T(x)) \cdot (D^2 h[\eta] (T_t(x))) \cdot \left( \frac{ x- T(x) }{\tau} \right)^{p-1} \, dt \, d \rho \\
 \leq  \int_{[0,1] \times \Omega}  \tau^{1-p/2} |D^2 h[\eta] (T_t(x))| \cdot \frac{ |x- T(x)|^p }{\tau^{p/2}}  \, dt \, d\rho \\
 \leq  \tau^{\frac 1q} \left( \int_0^1 \int_{\Omega}   |D^2 h[\eta] (T_t(x))|^q \, d\rho  \, d t \right)^{\frac 1q} \cdot \left(  \int_{\Omega} \frac{ |x- T(x)|^2 }{\tau }\, d\rho   \right)^{\frac p2} \\
 = \tau^{\frac 1q}  \left( \int_0^1 \int_{\Omega}| D^2 h[\eta] (x) |^q \rho_t (x) \, dx \, dt \right)^{\frac 1q} \cdot \left( \frac {W_2(\eta, \rho) ^2}{\tau} \right)^{\frac p2},
\end{multline*}
where $q$ denotes here the dual exponent of $2/p$, i.e. $q= \frac 2{2-p} = r-1$. Using H\"older inequality with exponents $\frac{q+1}q$ and $q+1$ and then the classical estimate $\| D^2 h[\rho] \|_r \leq C \| \Delta h[\rho]\|_r =C \| \rho\|_r$ we obtain 
$$\int | D^2 h[\rho]|^q \eta_t \leq \| D_2 h[\rho]\|_{q+1}^q \| \eta_t\| \leq  C M^{q+1}.$$ 
Using this estimate and eventually  Young inequality with exponents $q$ and $2/p$ we obtain

 \begin{align*}  \left( \int_0^1 \int_{\Omega} \tau | D^2 h[\rho] (x) |^q \eta_t (x) \, dx \, dt \right)^{\frac 1q} \cdot \left( \frac {W_2(\eta, \rho) ^2}{\tau} \right)^{\frac p2} &\leq \left(\tau^{\frac 1q}  C M^{\frac {q+1}q}  \right) \cdot \left( \frac {W_2(\eta, \rho) ^2}{\tau} \right)^{\frac p2} \\
 & \leq \tau  C M^r +  \frac {W_2(\eta, \rho) ^2}{\tau}
 \end{align*}
Now we can pass to the second inequality. We perform directly a Holder inequality with exponents$ \frac 2{3-p}$ and $\frac 2{p-1}$, and then a Holder inequality with exponents $\frac {4-p}2$ and $\frac{ 4-p}{2-p}$:

\begin{align*} \int_{\Omega} (\nabla h[\rho] - \nabla h[\eta] ) \cdot \left( \frac{ x- T(x) }{\tau} \right)^{p-1} \, d \rho &\leq \frac { 1}{\tau^{p-1}} \left( \int_{\Omega} | \nabla h[\rho-\eta] |^{\frac 2{3-p}} \, d \rho \right)^{\frac {3-p}2} \cdot \left( \int_{\Omega} |T(x)-x |^2 \, d \eta \right)^{\frac {p-1}2} \\
& \leq \frac 1{\tau^{p-1}} \| \nabla h[\eta- \rho] \|_{\frac {4-p}{3-p} } \| \eta \|_r^{\frac {3-p}2} W_2(\rho, \eta)^{p-1}
\end{align*}
We then use Proposition \ref{prop:sobone} and Lemma \ref{lem:sobone2} in order to write 
$$\| \nabla h[\eta- \rho] \|_{\frac {4-p}{3-p} } \leq C \|\eta -\rho\|_{ (W^{1,4-p}(\Omega))^*} \leq C \sqrt{M} W_2(\eta, \rho).$$ 
Using this and then a Young inequality, we obtain
$$  \frac 1{\tau^{p-1}} \| \nabla h[\eta- \rho] \|_{\frac {4-p}{3-p} } \| \eta \|_r^{\frac {3-p}2} W_2(\rho, \eta)^{p-1} \leq \frac C{\tau^{p-1}} M^{\frac {4-p}2} W_2(\rho, \eta)^p \leq \tau C M^r + \frac { W_2(\rho,\eta)^2 }{\tau}.$$
\end{proof}

We can then collect all the previous results to obtain the following estimate.
\begin{theorem}\label{thm:main}
Let $(\rho^\tau_n)_n$ be a sequence obtained from the JKO scheme for the Keller-Segel functional $\mathcal F:=\Ent+\mathcal G$, and let $J_{(p)}$ be defined as in \eqref{JpKS}, for $p<2$. Then we have
\begin{equation}\label{itestJpKS}
J_{(p)}(\rho^\tau_{n+1})+\mathcal F(\rho^\tau_{n+1})\leq J_{(p)}(\rho^\tau_{n})+\mathcal F(\rho^\tau_{n})+C\tau \max\{ \| \rho^\tau_n\|_r^r , \| \rho^\tau_{n+1}\|_r^r \}.
\end{equation}
Hence, if $J_{(p)}(\rho_0)<\infty$, if $\mathcal F$ is bounded from below and if $\| \rho^\tau_n\|_r^r$ stays bounded along iterations, then $(\rho^\tau_n)^{1/p}$ is bounded in $W^{1,p}$. In particular, for every $\alpha>0$, in the  two-dimensional Keller-Segel model with $\chi<8\pi$, when $(\rho_0)^{1/p}\in L^\infty\cap W^{1,p}$ there exists a solution $\rho_t$ which satisfie a bound of the form $J_{(p)}(\rho_t)\leq C+Ct^{1+\alpha}$, (the constant $C$ depending on $\alpha$, on $p$ and on the initial datum). If we do not suppose $(\rho_0)^{1/p}\in W^{1,p}$, then the same bound will be true for $t\geq t_0>0$, with a constant depending on $t_0$.
\end{theorem}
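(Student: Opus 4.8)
\emph{Step 1 (the one-step inequality).} I would start from \eqref{1stestJpKS}, which is Lemma \ref{generalH} applied to $H(z)=|z|^p$ with $u[\rho]=-\chi h[\rho]$, and control its two remainder terms by Lemma \ref{lem:estimates}. The only refinement needed is to keep a free parameter in the two Young inequalities that appear in the proof of that lemma: this yields, for every $\ve>0$, a constant $C_\ve=C_\ve(p,d,\Omega)$ such that each remainder is at most $C_\ve\tau\max\{\|\rho\|_r^r,\|\eta\|_r^r\}+\ve\,W_2^2(\eta,\rho)/\tau$. Summing the two contributions, using the minimality of $\rho$ in its own problem (with the competitor $\eta$) in the form $W_2^2(\rho,\eta)/(2\tau)\le\mathcal F(\eta)-\mathcal F(\rho)$, and choosing $\ve$ so small that $4p\chi\ve\le 1$, the term $2p\chi\ve\,W_2^2(\eta,\rho)/\tau$ is absorbed into $\mathcal F(\eta)-\mathcal F(\rho)$, which gives \eqref{itestJpKS} with $C=2p\chi C_\ve$ and $(\eta,\rho)=(\rho^\tau_n,\rho^\tau_{n+1})$. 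As in Proposition \ref{prop:smoothing}, to make Lemma \ref{generalH}, \eqref{1stestJpKS} and Lemma \ref{lem:estimates} legitimate one should first replace $\rho$ by $\rho_\delta\in Prox^\tau_{\Ent+\delta\mathcal H+\mathcal G}(\eta)$ (for which $\rho_\delta$ and $h[\rho_\delta]$ are smooth and bounded away from zero) and then let $\delta\to0$, exactly as in the proof of Theorem \ref{lpestimate}; the $L^r$ bounds we will use are stable in this limit, since Theorem \ref{lpestimate} is itself proven through the same regularisation.

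\emph{Step 2 (the conditional $W^{1,p}$ bound).} Since $\mathcal F(\rho^\tau_n)$ is non-increasing in $n$ and bounded from below, summing \eqref{itestJpKS} from $0$ to $n-1$ gives $J_{(p)}(\rho^\tau_n)\le J_{(p)}(\rho_0)+\mathcal F(\rho_0)-\inf\mathcal F+C\,n\tau\,\sup_m\|\rho^\tau_m\|_r^r$, which is finite and, on bounded time intervals, bounded uniformly in $\tau$, provided $\sup_m\|\rho^\tau_m\|_r<\infty$ and $J_{(p)}(\rho_0)<\infty$. To turn this into a $W^{1,p}$ bound on $\rho^{1/p}$ I would use $\|\rho^{1/p}\|_{W^{1,p}}^p=1+p^{-p}\int|\nabla\rho/\rho|^p\,d\rho$ together with $|\nabla\rho/\rho|^p\le 2^{p-1}(|Z_\rho|^p+\chi^p|\nabla h[\rho]|^p)$ and the elliptic estimate $\|\nabla h[\rho]\|_\infty\le C\|h[\rho]\|_{W^{2,r}}\le C\|\rho\|_r$, which is licit because $r=(4-p)/(2-p)>2=d$, so that $W^{2,r}(\Omega)\hookrightarrow W^{1,\infty}(\Omega)$ in dimension two; this bounds $\|(\rho^\tau_n)^{1/p}\|_{W^{1,p}}$ in terms of $J_{(p)}(\rho^\tau_n)$ and $\|\rho^\tau_n\|_r$. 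The same two inequalities evaluated at $\rho_0$, with $(\rho_0)^{1/p}\in W^{1,p}$ and $\rho_0\in L^r$, give $J_{(p)}(\rho_0)<\infty$.

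\emph{Step 3 (the two-dimensional case).} Here $(\rho_0)^{1/p}\in L^\infty$ means $\rho_0\in L^\infty$, hence $\rho_0\in L^s$ for every $s<\infty$ and $\mathcal F(\rho_0)<\infty$; since $d=2$ and $\chi<8\pi$, $\mathcal F$ is bounded below and Theorem \ref{lpestimate} is available for every finite exponent. Given $\alpha>0$, I would pick a finite exponent $\bar r\ge r$ with $r/\bar r\le\alpha$ and run the JKO scheme selecting, at each step, a minimiser for which Theorem \ref{lpestimate} applies with exponent $\bar r$: then $\|\rho^\tau_n\|_{\bar r}^{\bar r}\le\|\rho_0\|_{\bar r}^{\bar r}+(1+n\tau)D_2(e_0,\bar r)$, and interpolation on the bounded set $\Omega$ gives $\|\rho^\tau_n\|_r^r\le C(1+n\tau)^{r/\bar r}\le C(1+n\tau)^{\alpha}$. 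Inserting this into the summed form of \eqref{itestJpKS} and comparing the sum with an integral gives $J_{(p)}(\rho^\tau_n)\le C+C\tau\sum_{k\le n}(1+k\tau)^\alpha\le C+C(1+n\tau)^{1+\alpha}$; letting $\tau\to0$, the curves $\rho^\tau$ converge (up to subsequences, uniformly for $W_2$) to a solution $\rho_t$ of Keller--Segel, and lower semicontinuity of $J_{(p)}$ along this convergence yields $J_{(p)}(\rho_t)\le C+Ct^{1+\alpha}$. When only $(\rho_0)^{1/p}\in L^\infty$ is assumed, $J_{(p)}(\rho_0)$ need not be finite, so I would restart the estimate from a well-chosen step at positive time. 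Combining the energy-dissipation inequality $\sum_n W_2^2(\rho^\tau_n,\rho^\tau_{n+1})/(2\tau)\le\mathcal F(\rho_0)-\inf\mathcal F$ with the optimality identity $W_2(\rho^\tau_n,\rho^\tau_{n+1})/\tau=J_{(2)}(\rho^\tau_{n+1})^{1/2}$ (from $Z_{\rho^\tau_{n+1}}=-\nabla\varphi/\tau$) and the elementary bound $J_{(p)}\le J_{(2)}^{p/2}$ (valid since $\rho^\tau_n$ is a probability and $p<2$), a H\"older inequality in the discrete time variable gives $\tau\sum_{n=1}^{N}J_{(p)}(\rho^\tau_n)\le C(N\tau)^{1-p/2}$. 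Hence among the indices $k$ with $k\tau\in[t_0/2,t_0]$ there is one, $m^\ast$, with $J_{(p)}(\rho^\tau_{m^\ast})\le C\,t_0^{-p/2}$, a bound uniform in $\tau$; summing \eqref{itestJpKS} from $m^\ast$ onwards (and using $\mathcal F(\rho^\tau_{m^\ast})\le\mathcal F(\rho_0)$, together with the $L^{\bar r}$ bounds of Theorem \ref{lpestimate} as above) and passing to the limit gives $J_{(p)}(\rho_t)\le C(t_0)+Ct^{1+\alpha}$ for $t\ge t_0$.

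\emph{Main obstacle.} The delicate point is the passage to the limit (in $\delta$ and in $\tau$): one must know that $\rho\mapsto J_{(p)}(\rho)=\int|\nabla\rho/\rho-\chi\nabla h[\rho]|^p\,d\rho$ is lower semicontinuous along the convergence at hand. For a \emph{fixed} vector field $b$ the integrand $(s,\xi)\mapsto|\xi+sb|^p/s^{p-1}$ is jointly convex, so the real issue is that the drift $\nabla h[\rho^\tau(t)]$ varies with $\rho^\tau(t)$; this is dealt with by observing that the uniform $L^r$ bounds on $\rho^\tau$ force, through elliptic regularity, $\nabla h[\rho^\tau(t)]\to\nabla h[\rho(t)]$ \emph{uniformly}, but this strong convergence of the drift still has to be combined with care with the convexity/semicontinuity of the integral functional. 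A secondary but essential point, which is precisely what upgrades the naive quadratic-in-time estimate to the bound $t^{1+\alpha}$ for every $\alpha>0$, is the use of Theorem \ref{lpestimate} with an exponent $\bar r$ much larger than $r$.
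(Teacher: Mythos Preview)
Your proof is correct and follows essentially the same route as the paper: combine \eqref{1stestJpKS} with Lemma~\ref{lem:estimates} and the energy drop $W_2^2/\tau\lesssim\mathcal F(\eta)-\mathcal F(\rho)$ to get \eqref{itestJpKS}, then feed in the $L^r$ bounds from Theorem~\ref{lpestimate} applied with a large auxiliary exponent to obtain the $t^{1+\alpha}$ growth, and finally use the energy dissipation to locate a good starting time when $J_{(p)}(\rho_0)=\infty$. The only differences are in the level of detail: you insert an $\ve$ in the Young inequalities to match the coefficient~$1$ in front of~$\mathcal F$ literally (the paper absorbs whatever constant comes out into~$C$, which works just as well for the conclusions since $\mathcal F$ is decreasing), you make the $\delta$-regularisation explicit, and you carry out the ``restart at a good time'' argument at the discrete level via $\tau J_{(2)}(\rho^\tau_{n+1})=W_2^2(\rho^\tau_n,\rho^\tau_{n+1})/\tau$ and H\"older in~$n$, whereas the paper phrases it in continuous time through $\mathcal F(\rho_0)-\mathcal F(\rho_{t_0})=\int_0^{t_0}J_{(2)}(\rho_t)\,dt$. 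Your explicit discussion of the lower semicontinuity of $J_{(p)}$ under the limits $\delta\to0$ and $\tau\to0$ (splitting into the convex part with frozen drift and the uniformly convergent drift via elliptic regularity and $r>d=2$) fills in a point the paper leaves implicit.
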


\begin{proof} The iterated estimate \eqref{itestJpKS} is just a consequence of \eqref{1stestJpKS} and of Lemma \ref{lem:estimates}, together with 
$$\frac{W_2^2(\rho^\tau_n,\rho^\tau_{n+1})}{\tau}\leq \mathcal F(\rho^\tau_n)-\mathcal F(\rho^\tau_{n+1}).$$
If $\mathcal F$ is bounded from below and the $L^r$ norm from above, we then obtain a bound on $J_{(p)}(\rho^\tau_{n})$ and then we use 
$$||(\rho)^{1/p}||_{W^{1,p}}\leq C+CJ_{(p)}(\rho)+C\int |\nabla h[\rho]|^pd\rho.$$
Using $\rho\in L^r$, we just need to bound $ \nabla h[\rho]$ in $L^{pr'}$ in order to obtain the desired Sobolev bound. From elliptic regularity, we have $h[\rho]\in W^{2,r}$ and hence $\nabla h[\rho]\in L^{r^*}$, with $r^*=dr/(d-r)$, if $r<d$ (the case $r\geq d$ is easy, since in this case $\nabla h[\rho]$ belongs to all Lebesgue spaces). We just need to check $r^*\geq pr'$, which is true using $r\geq 3$ and $p<2$.

In order to apply this estimate to the two-dimensional Keller-Segel model with $\chi<8\pi$, we first note that in this case $\mathcal F$ is bounded from below (this would also be true for $\chi=8\pi$). We just need to bound the $L^r$ norm of the solution, and for this we use the estimates of Theorem \ref{lpestimate}. We obtain $||\rho_t||_q\leq C(1+t)^{1/q}$ for arbitrary $q>1$, since $\rho_{0}\in L^\infty$. Taking $q=r/\alpha$ one gets $||\rho_t||_r^r\leq C(1+t)^{\alpha}$ and the conclusion follows iterating  \eqref{itestJpKS}.

If we do not suppose $J_{(p)}(\rho_0)<\infty$, then we can use the dissipation of the energy $\mathcal F$ itself. Indeed, this dissipation provides $\mathcal F(\rho_0)=\int_0^{t_0} J_{(2)}(\rho_t)dt+\mathcal F(\rho_{t_0})$, which means that we can assume $J_{(2)}(\rho_{t_1})\leq \mathcal F(\rho_0)/t_0$ for some $t_1\in (0,t_0)$. This provides finiteness of $J_{(p)}(\rho_{t_1})$ for every $p<2$. We then start a JKO scheme from $\rho_{t_1}$.\end{proof}
%
%\begin{corollary} Let us assume that we have a JKO scheme of $ \Ent +\mathcal{G} $ starting with $\rho_0 \in L^r(\Omega)$, with $r=\frac {4-p}{2-p}$. Then, for any $\varepsilon$, the functional $J_{(p)}(\rho)+\varepsilon( \Ent (\rho) +  \mathcal{G}(\rho) ) $ increases at most quadratically in time.
%\end{corollary}
%\begin{proof} We know from the integrability estimates, that $\|\rho_n\|^r_r$ increases at most linearly in time, and so we can consider \eqref{eqn:main2} with $v= - \chi \nabla h[\eta]$, then apply Lemma \ref{lem:estimates} and obtain the conclusion, using the basic estimate coming from the optimality of $\rho_{n+1}$ in the JKO step:
% $$\frac{W^2(\rho_{n+1}, \rho_n)}{2\tau} +  \mathcal{F}(\rho_{n+1}) + \Ent ( \rho_{n+1})  \leq  \mathcal{F}(\rho_n) + \Ent ( \rho_n),$$
% and then summing up.
%\end{proof}

{\bf Acknowledgements} The work started when both the authors were at Laboratoire de Mathématiques d'Orsay, the first author being supported as a post-doc researcher by the ANR project ISOTACE (ANR-12-MONU-0013). This work is also supported by the Agence nationale de la recherche via the project ANR-16-CE40-0014 - MAGA - Monge-Ampère et Géométrie Algorithmique.

\end{document}